\setlist{itemsep=2pt,topsep=4pt}
\titleformat{\chapter}[display]
  {\bfseries\small}
  {\filleft\MakeUppercase{\chaptertitlename} \Large\thechapter}
  {1ex}
  {\titlerule\vspace{1ex}\filleft \scshape \bfseries \LARGE}
\def\Z{\mathbb{Z}}
\newcommand{\la}{\lambda}
\def\f{{\varphi}}
\def\im{{\mathtt i}}
\def\g{{\gamma}}
\def\cO{\mathcal{O}}
\def\cC{\mathcal{C}}
\def\N{{\mathbb N}}
\renewcommand{\to}{\rightarrow}
\numberwithin{equation}{section}
\theoremstyle{plain}
\newtheorem{teor}{Theorem}[section]
\newtheorem{ese}[teor]{Example}
\newtheorem{prop}[teor]{Proposition}
\newtheorem{lem}[teor]{Lemma}
\newtheorem{cor}[teor]{Corollary}
\newcommand{\bdm}{\begin{displaymath}}
\newcommand{\edm}{\end{displaymath}}
\newcommand{\bpb}{\begin{prob}}
\newcommand{\epb}{\end{prob}}
\newcommand{\beq}{\begin{equation}}
\newcommand{\eeq}{\end{equation}}
\newcommand{\bem}{\begin{multline}}
\newcommand{\eem}{\end{multline}}
\newcommand{\bes}{\begin{ese}}
\newcommand{\ees}{\end{ese}}
\newcommand{\bde}{\begin{defi}}
\newcommand{\ede}{\end{defi}}
\newcommand{\bpr}{\begin{prop}}
\newcommand{\epr}{\end{prop}}
\newcommand{\ble}{\begin{lem}}
\newcommand{\ele}{\end{lem}}
\newcommand{\bte}{\begin{teor}}
\newcommand{\ete}{\end{teor}}
\newcommand{\bco}{\begin{cor}}
\newcommand{\eco}{\end{cor}}
\theoremstyle{definition}
\newtheorem{defi}[teor]{Definition}
\newtheorem{remark}[teor]{Remark}
\newcommand{\R}{\mathbb{R}}
\newcommand{\T}{\mathbb{T}}
\newcommand{\calO}{{\mathcal O}}
\newcommand{\gots}{{\mathfrak s}}
\newcommand{\al}{\alpha}
\newcommand{\tb}{\mathtt{b}}
\newcommand{{\resonance}}{relevant self-energy cluster }
\newcommand{\ii}{{\rm i}}
\def\cC{{\mathcal C}}
\newcommand{\tk}{{\mathtt k}}
\mathchardef\emptyset="001F
\newcommand{\myitem}[1][]{
  \protected@edef\@currentlabel{#1}%
\item[#1]
}
\let\origdoublepage\cleardoublepage
\newcommand{\clearemptydoublepage}{%
  \clearpage
  {\pagestyle{empty}\origdoublepage}%
}
\let\cleardoublepage\clearemptydoublepage
\title{\bf{ Reducibility of first order linear operators on tori via Moser's theorem}}
\author{
R. Feola$^*$, 
F. Giuliani$^\dag$ , R. Montalto$^{**}$, M. Procesi$^\dag$
\\
\small
${}^*$ SISSA, Trieste, rfeola@sissa.it
\\
\small
${}^\dag$  Roma Tre, fgiuliani@mat.uniroma3.it, procesi@mat.uniroma3.it
\\
\small
${}^{**}$ Universit\"at Z\"urich, riccardo.montalto@math.uzh.ch
}
\begin{document}

\date{}
\maketitle
\abstract{In this paper we prove  reducibility of classes of linear first order operators on tori by applying  a generalization of  Moser's theorem on straightening of vector fields on a torus. We consider  vector fields which are a $C^\infty$ perturbations of a constant  vector field, and prove that they are conjugated  --by a $C^\infty$ torus diffeomorphism-- to a constant diophantine flow, provided that the perturbation is small in some given $H^{s_1}$ norm and that the initial frequency is in some Cantor-like set.  Actually in the classical results of this type the regularity of the change of coordinates which straightens the perturbed vector field coincides with the class of regularity in which the perturbation is required to be small. This improvement is achieved thanks to ideas and techniques coming from the Nash-Moser theory.

\tableofcontents

\section{Introduction and main results}
In the last years there has been a lot of advances in the study of  KAM theory and almost global existence for classes of quasi-linear and fully nonlinear PDEs on the circle.

In these results, the main issue is to prove the reducibility of quasi-periodically time dependent linear operators.
For instance these operators arise from linearizing such PDEs at small quasi-periodic approximate solutions, whose study  is required by Nash-Moser type schemes, together with a careful quantitative analysis.

%
%

 Given a linear PDE with coefficients which depend on time in a quasi-periodic way, we say that it is reducible if there exists a bounded change of variables  depending quasi-periodically on time (say mapping $H^s\to H^s$ for all times), which makes constant its coefficients. This is a problem which is interesting on itself and has been studied 
for PDEs both on compact and non-compact domains, mostly in a perturbative regime. We mention among others   {\cite{EK2}, \cite{Bam},\cite{BG},\cite{GP},\cite{BGMR},\cite{Mon2}} for the case of linear equations. 
Regarding reducible KAM theory for non linear PDEs the literature is quite vast, we mention  the classical papers \cite{Ku},\cite{W},\cite{KP},\cite{CY} for PDEs on the circle, \cite{GY},\cite{EK},\cite{GYX},\cite{PP},\cite{EGK} for PDEs on $\T^n$.  These works  all deal with PDEs with bounded nonlinearities. Regarding unbounded cases we mention \cite{Ku2}, \cite{LY}, \cite{BBiP2} for semilinear PDEs and 
\cite{Airy},\cite{BBM16},\cite{FP},\cite{Gi},\cite{BM1},\cite{BBHM} for the quasilinear case.
%
\\
In this paper we discuss the reducibility of first-order operators and show that such property  holds, under some explicit  hypotheses, for hyperbolic PDEs on any torus $\T^N$, independently of the spatial dimension. The main idea is to use the identification between derivation operators and vector fields in order to change the PDE reduction problem into a corresponding {\em straightening} problem for a vector field.

We remark that this idea is in spirit very similar to the one used in \cite{BGMR} in order to prove reducibility of a class of quadratic second order  perturbations of the Harmonic oscillator on $\R^N$. Indeed also in that case the key of the proof is that there is an identification between such operators and classical Hamiltonian functions (with $N$-degrees of freedom) so that the reducibility theorem  corresponds to a finite dimensional KAM theorem.

We stress that in order to apply our reducibility result in the context of KAM theory for PDEs we need a very good control on the changes of variables that diagonalize our operators, which corresponds to a very good quantitative control in the  Arnold-Moser  theorem of the norm of the change of variables which straightens the torus. As far as we know such estimates were not known so that our result may have some interest also in that context. 

\medskip

Let us briefly describe our result in the context of ODEs. The  problem of 
 structural stability of linear flows on a torus  has formed the basis of KAM theory.
On the torus $\T^N$ one sets $X_\alpha$ to be the linear vector field with  frequency $\alpha$, namely the vector field which generates the flow $\theta\mapsto \theta+ \alpha t$ with $\alpha\in \R^N$, then the result can be stated as follows: for all  diophantine frequencies $\al$ and for  any appropriately small vector field $f$  there exists a Lipschitz function $\al\to \lambda(\al)$ and a change of variables on $\T^N$ which conjugates $X_{\alpha+\lambda(\alpha)}+f$ to $X_\alpha$.   It is easily seen that $\al\to \al+ \lambda(\al)$ can be extended to  a Lipeomorphism on $\R^N$ so that the statement can be equivalently rephrased as the conjugation between $X_\xi+f$ and $X_{\al(\xi)}$, where the initial frequency $\xi\in \R^N$ is chosen so that the final frequency $\al(\xi)$ is diophantine. This result was first proved by Arnold in \cite{Ar1,Ar2} in the case of analytic vector fields, then by Moser in \cite{Mos67} for finitely differentiable vector fields. The problem was further investigated by R\"ussmann \cite{Rus,Rus2}, P\"oschel \cite{PosTe,Pos1}, Herman \cite{Herm,Herm2}, Salamon \cite{Sala} 
with the purpose of giving optimal bounds on the regularity hypotheses needed on the vector field. See also \cite{Yo}, \cite{FaKr},\cite{Mas}.
%
\\
In this paper, we consider a different approach, namely we consider  $C^\infty$ vector fields $f$ which are small in  some {\em low} Sobolev norm with no further quantitative information on the higher Sobolev norms. Then we prove that the change of variables predicted by Moser's theorem is in fact $C^\infty$ and we give a very good control of the Sobolev norms of this diffeomorphism in terms of the Sobolev norms of $f$. {More precisely we prove that the  diffoemorphism is of the form $\theta\mapsto\theta+h(\theta)$ where,  for all $s$, the $H^s$-norm of $h$ is bounded proportionally to the $H^{s+\mu}$ norm of $f$ for some fixed  $\mu$  {\em independent of $s$}}; this is the content of Theorem \ref{moserMic}. Note that in order to obtain this result we do not rely on approximation by analytic fuctions, as in the aforementioned papers, instead we approach the problem in the spirit of the Nash-Moser theory where one uses interpolation and smoothing estimates in order to control the loss of regularity due to the presence of small divisors. \\
The novelty of our approach is to get a good control of \emph{all} the Sobolev norms of the change of variables by requiring only smallness assumptions on fixed low norm of the perturbation.

\smallskip

As explained before, our more general formulation of the Arnold-Moser theorem  is well suited for applications to the reducibility of linear first-order operators on $H^s(\T^N,\R)$, {which are the main motivation of our work.}

Let us now describe how one attacks reducibility problems in PDEs via Moser's theorem. It is well known that  
a map $f\in C^\infty(\T^N,\R^N)$ can be seen in two ways: as the generator of the flow $\Phi_f(\theta,t)$ given by the ODE $\dot\theta =f(\theta)$, 
  or as the linear functional $L_f$ acting on the space $C^\infty(\T^N,\R)$    as 
$u \mapsto L_f[u](\theta):= f(\theta)\cdot \partial_{\theta} u(\theta)$. This identification is coordinate independent, namely given a diffeomorphism $\theta\to \theta_1=\Psi(\theta)$, then the derivation operator associated to the  push-forward of $f$  is the conjugate of the derivation operator $L_f$ with respect to the $\Psi$,  in formulas
$$
L_{\Psi_* f} u = \Psi^{-1} \circ (L_f [\Psi\circ u] )\,.
$$ 
Now clearly the same torus diffeomorphism which straightens  the vector field  puts the corresponding derivation operator to constant coefficients.
\\
The idea of analyzing a first order linear differential operator through its associated vector field is the so called method of characteristics, which is the classical way in which a first order linear PDE is reduced to a (possibly non-linear) ODE. In this case it allows us to solve linear non-homogeneous problems of the form
\[
L_{\xi+f}u= (\xi+ f(\theta))\cdot \partial_\theta u(\theta) = g(\theta)+\mathtt c
\]
where $\xi\in \R^N$ are parameters, $f\in C^\infty(\T^N,\R^N), g\in C^\infty(\T^N,\R)$  are given, $f$ is small in some fixed $s_1$-Sobolev norm and $u\in H^{s}(\T^N,\R),\mathtt c\in \R$ are the unknowns. By Arnold-Moser theorem for all diophantine frequencies $\al(\xi)$ there exists a change of variables which conjugates  $L_{\xi+f}$ to the constant coefficients operator $L_{\al(\xi)}$, we have
\[
L_{\al(\xi)} v = \alpha(\xi)\cdot \partial_{\theta} v = \tilde g + \mathtt c\,,\quad v = \Psi^{-1}\circ u \,,\quad \tilde g= \Psi^{-1}\circ g
\]
and this equation is trivially solved by setting $\mathtt c$ to be the average over $\T^N$ of $\tilde g$. Going back to the original variables we get quantitative tame  estimates on the Sobolev norm of the solution.
\\
Following this line of thought we use Theorem \ref{moserMic}   to  prove the reducibility of a linear transport equation of the form
\begin{equation}\label{sistema}
\dot u= L(u)
\end{equation}
where $L$,  defined as
$$
u\to L u:=  \sum_{i=1}^d (\zeta_i+ a_i(\omega t,x))\partial_{x_i} u\,,
$$
depends on time in a quasi-periodic way.
Here $\xi:=(\omega,\zeta)\in \R^\nu\times\R^d$ are parameters to be fixed and $x\in \T^d$.
\\
The reducibility of such systems  amounts to proving that there exists a linear  time dependent operator  $\Psi(\omega t)$ acting on $C^\infty(\T^d,\R)$ which reduces $L$ to constant coefficients, i.e. there exists a map $ \R^\nu \times \R^d \mapsto \R^d$, $\zeta \mapsto m(\omega, \zeta)$ such that 
\begin{equation}\label{suca}
- \Psi^{-1}\Psi_t +\Psi^{-1}((\zeta + a(\omega t, x))\cdot \partial_{x} \Psi )\cdot \partial_x = m(\omega, \zeta) \cdot \partial_x\,.
\end{equation}
We look for an operator $\Psi(\omega t)$ of the special form
\[
\Psi(\omega t) v := v\circ \Psi(\omega t)\,,\quad \Psi(\omega t, x)= x + \beta(\omega t,x).
\] 
Here with an abuse of notation we are representing with the same symbol $\Psi$ the time dependent torus diffeomorphism and its action on functions.
\\
In this way our reduction equation \eqref{suca} becomes
\begin{equation}\label{su2}
\Psi^{-1}(\omega\cdot\partial_\f \beta + (\zeta + a(\f, x))\cdot(1+\partial_x \beta) ) = m_\infty(\omega, \zeta)
\end{equation}
with unknowns $m_\infty\in \R^d$ and $\beta\in C^\infty(\T^\nu\times\T^d,\R^d)$.
\\
 One can directly see  that solving equation \ref{su2} is equivalent to finding a change of variables of the form
 $\Psi:(\f,x)\to (\f,x+\beta(\f,x))$   which straightens  the vector field
\begin{equation}\label{befana}
X:= \omega\cdot\frac{\partial}{\partial \f} + (\zeta + a(\f, x))\cdot\frac{\partial}{\partial x}\,,\qquad \Psi_* X= \omega\cdot\frac{\partial}{\partial \f} + m_\infty(\omega,\zeta)\cdot\frac{\partial}{\partial x}.
\end{equation}
In order to do this we just need a reformulation of Theorem \ref{moserMic}, which we give in Proposition \ref{esempio1}. This allows us to solve \eqref{sistema} 
for $(\omega,\zeta)$ in appropriate Cantor sets, and to prove that in such case there is no growth of the Sobolev norms, see Theorem \ref{riducibilita trasporto}.
\\
A further application is in the case of a PDE as
\[
u_t + (m_0(\omega) +a(\omega t,x))u_x = g(t,x)\,,
\]
hence in the same class as the previous example but with $x\in \T$ and with {\em less parameters}, i.e. here $\zeta=m_0(\omega)$. Of course the straightening Theorem can be reformulated to fit this case, see Corollary \ref{cor m 0 omega a 0 omega}. Then in Corollary \ref{misura 1-d} we show that reduce to 
\[
v_t + m_\infty(\omega) v_x = \tilde g(t,x) 
\]
for all the $\omega$ such that $(\omega,m_\infty(\omega))$ is diophantine. Finally we show that  such set has positive measure,  under some weak hypotheses on $m_0(\omega)$.

\smallskip

Let us now briefly discuss the strategy of proof of Theorem \ref{moserMic}.
 We construct the transformation $\Psi^{(\infty)}$ by means of an iterative KAM-type scheme in Sobolev class.  Our method is based on constructing a sequence of transformations $(\Phi_n)_{n \in \N}$ of the form $\Phi_n (\theta) = \theta+g_n(\theta)$, $n \in \N$ which reduce quadratically the size of the vector field $f$. More precisely, at the $n$-th step of our procedure, we deal with an operator \[X_n := (\al_n(\xi) +f_n(\xi,\theta))\cdot \frac{\partial}{\partial\theta}.\] Then, we choose a function $g_n(\xi,\theta)$ so that 
\begin{equation}\label{omologica alpha introduzione}
\alpha_n \cdot \partial_\theta g_n + \Pi_{N_n}f_n(\theta) = \langle f_n \rangle_{\T^{N}}
\end{equation}
where $N_n = N_0^{(\frac32)^n}$, $\Pi_{N_n}$ is the orthogonal projection on the Fourier modes $|k| \leq N_n$ and $\langle f_n \rangle_{\T^{N}}$ is the average of the vector valued function $f_n$ w.r.t. to $\theta\in\T^N$. The equation \eqref{omologica alpha introduzione} can be solved by imposing the {\it diophantine  conditions} 
$$
|\alpha_n\cdot k| \geq \frac{\gamma}{\langle k \rangle^\tau}\,, \qquad \forall 0 < |k| \leq N_n ,
$$
for some $\gamma \in (0, 1)$ and $\tau > \nu$. Then, setting $\Phi_n (\theta) = \theta+ g_n(\theta)$, one gets 
$$
X_{n + 1} = (\Phi_n)_* X_n =  (\alpha_{n + 1}(\xi)+ f_{n+1})\cdot \frac{\partial}{\partial\theta} ,
$$
where $\alpha_{n + 1} := \alpha_n + \langle f_n \rangle_{\T^{N}}$, $|f_{n + 1}| \simeq |\Pi_{N_n}^\bot f_n| + |f_n|^2$.

We show that $f_n$ converges to $0$ in any Sobolev space $H^s$, $s \geq 0$ provided $\| f_0 \|_{H^{s_1}}$ is small for some fixed Sobolev index $s_1$ (which has to be taken large enough). Then we define the change of variables $\Psi^{(n)} := \Phi_0 \circ \Phi_1 \circ \ldots \circ \Phi_n$ and show that it is of the form $\Psi^{(n)}(\theta)= \theta + h_n(\theta)$. Moreover the sequence $h_n\in C^\infty(\T^N,\R^N)$ converges to a function $h_\infty=h$   in any $H^s$, $s \geq 0$. 
Then we verify that $\Psi(\theta)= \theta+h(\theta)$ is a torus diffeomorphism which conjugates $X_0$ to $\alpha_{\infty}(\xi)\cdot \frac{\partial}{\partial\theta} $.
The final step is to show that the condition $\al_\infty(\xi)$ diophantine is sufficient to ensure that all the Melnikov conditions are satisfied.

  The results obtained in Theorem \ref{riducibilita trasporto} and Corollary \ref{misura 1-d} will be applied in \cite{FGP} and in \cite{Montsublin}, since they allow to reduce to constant coefficients the highest order term of some linear equations. Note that in view of the application to \cite{FGP}, it is needed also to study the dependence of the transformation $\Psi^{(\infty)}$ w.r. t. an additional parameter $\lambda$ which is in a Banach space. This is done in Corollary \ref{cor1}. 

\medskip

As it has been explained above, the reducibility of linear equations is the key step for proving existence and stability of quasi-periodic solutions for nonlinear PDEs. These problems are much harder when the nonlinear part of the vector field of the equation contains derivatives of the same order as the linear part (fully nonlinear case). Indeed, the usual KAM algorithms fail since the {\it loss of derivatives} accumulates quadratically along the iterative scheme, implying that such a scheme does not converge. Note that the operator $L$ is a perturbation of order $1$ ($f_0 \cdot \partial_\theta$) of a diagonal operator with constant coefficients of the same order ($\xi\cdot\partial_{\theta}$).\\
In one-space dimension, Baldi-Berti-Montalto \cite{Airy} proposed a method for proving the reducibility of linear operators of this form. Such a procedure is split in two steps: a regularization procedure, in which the operator is reduced  to a diagonal one plus a bounded remainder, and a KAM scheme which completely diagonalizes the linear operator by reducing {\it quadratically } the size of the remainders.\\
The reduction procedure that one has to perform depends strongly on the linear dispersion law of the PDE. 
For instance when the dispersion law is superlinear, as in KdV or NLS,  in order to reduce to constant coefficients the leading term one applies a torus diffeomorphism of the form $x\to x+ \beta(x,\omega t)$ where $\beta$ solves 
\begin{equation}\label{nome}
(1+a(\f,x))(1+\beta_x)= m(\f)\,,
\end{equation}
which can be solved directly by integration.
\\
Comparing this equation with \eqref{su2} one sees that here $\partial_{\f}$ does not appear and the right hand side still depends on $\f$. This is due to the fact that the leading derivative is the spatial one.  
\\
When the dispersion law is linear, as in the case we deal with, there are some difficulties. The time and the space play the same role and one has to deal with transport-like equations as \eqref{su2}.\\
One could solve these equations by perturbative arguments. As we explained above, the classical KAM reducibility scheme fails. Our strategy is to construct, at the $n$-th step of the iteration, a change of variables $\Phi_n$ of the form $\Phi_n u = u(\theta+g_n(\theta))$. {The main point} is that the sequence of transformations $\Psi^{(n)} := \Phi_0 \circ \Phi_1 \circ \ldots \circ \Phi_n$ does not converge w.r.t.  the operator norm of bounded linear operators $H^s \to H^s$, since $\Phi_n - {\rm Id} = O(g_n \cdot\partial_{\theta})$ is small in size but unbounded of order one. This is the reason for approaching the problem from the point of view of the straightening of the corresponding vector field.

\smallskip

We remark also that in \cite{BBHM}  a transport equation similar to equation \eqref{su2} appears in the reducibility of the linearized operator. In this case the corresponding vector field is more degenerate, in the sense that, in the basis $\partial/\partial \f$, $\partial/\partial x$, has the form $(\omega, O(\varepsilon))$, instead of $(\omega, \zeta+O(\varepsilon))$ as in \eqref{befana}. In \cite{BBHM} the authors reduce the problem to the study of a nonlinear ODE and then they apply a Nash-Moser Hormander Theorem, see \cite{BaHa},  to solve it. In our case the same strategy fails.

 
%

\smallskip

The paper is organized as follows. In Section \ref{sezione functional setting}, we introduce the functional setting needed to state precisely Theorem \ref{moserMic}. In Section \ref{tiraddrizzotutto}, we state our main results at the level of vector fields. Then in Section \ref{PDE} we state and prove the corresponding results on reducibility of PDEs. In Section \ref{quattro} we prove Theorem \ref{moserMic}, which is based on the iterative Lemma \ref{ittero}. The inductive step of such a Lemma is basically proved in Lemma \ref{lemma KAM step}. Finally, in the Appendix \ref{lemmitecnici}, we collect some technical lemmata concerning Sobolev spaces and operators induced by diffeomorphisms of the torus which are of importance for our proofs. 
 
 \thanks{{\bf Acknowledgements:} Two of the authors were supported by ERC grant 306414 HamPDEs under FP7. The authors wish to thank P. Baldi, L. Biasco  E. Haus and J. Massetti for helpful discussions.}
\section{Functional setting}\label{sezione functional setting}
\paragraph{Sobolev functions.} Given $N, m \in \N$, we consider  real valued functions $u(\theta)\in L^2(\T^{N}, \mathbb{R}^m)$:
\[
u(\theta)= \sum_{k\in \Z^N}u_{k}e^{\mathrm i k\cdot \theta}\,,\quad \overline u_k =u_{-k}.
\]
We use the simplified notation $L^2$ to denote $L^2(\T^{N},\mathbb R^m)$. 
We define the Sobolev space 
\begin{equation}\label{space1}
H^s(\T^{N}, \mathbb{R}^m):=\left\{ u(\varphi, x)\in L^2 : \lVert u \rVert_s^2:=\sum_{k\in\mathbb{Z}^N} \langle k \rangle^{2 s} \lvert u_{k} \rvert^2 <\infty \right\}
\end{equation}
where $\langle k \rangle:=\max( 1,|k|)$.

  If we separate the variables $\theta= (\varphi,x) \in \T^{\nu+d}$, we may consider a  real valued functions $u(\varphi,x)\in L^2$ 
 as a $\varphi$-dependent family of functions $u(\varphi, \cdot)\in L^2(\T^d_x, \mathbb{R}^m)$ with the Fourier series expansion
\begin{equation*}
u(\varphi, x)=\sum_{j\in\mathbb{Z}^d } u_{j}(\varphi)\,e^{\mathrm{i} j \cdot  x}=\sum_{\ell\in\mathbb{Z}^{\nu}, j \in \mathbb{Z}^d } u_{\ell, j} \,e^{\mathrm{i}(\ell \cdot \varphi+j \cdot x)}.
\end{equation*}
In this case it may be more convenient to describe the  Sobolev space 
\begin{equation}\label{space}
H^s(\T^{\nu+d}, \mathbb{R}^m):=\left\{ u(\varphi, x)\in L^2 : \lVert u \rVert_s^2:=\sum_{\ell\in\mathbb{Z}^{\nu}, j\in\mathbb{Z}^d} \langle\ell, j \rangle^{2 s} \lvert u_{\ell j} \rvert^2 <\infty \right\}
\end{equation}
where $\langle \ell, j \rangle:=\max\{ 1, \lvert \ell \rvert, \lvert j \rvert\}$, $\lvert \ell \rvert:=\sum_{i=1}^{\nu} \lvert \ell_i \rvert$, $|j| := \sum_{i = 1}^d |j_i|$. 
If $\gots_0:=[N/2] + 1$ then for $s\geq \gots_0$ the spaces $H^s:=H^s (\T^{N},\mathbb R^m)$ are embedded in $ L^{\infty}(\T^{N},\mathbb R^m)$ and they have the algebra and interpolation structure, namely  $\forall u, v\in H^s$ with $s\geq \gots_0$:
\[
\lVert u v \rVert_s\le C(s,N) \lVert u \rVert_s \lVert v \rVert_s, 
\]
\begin{equation}\label{TameProduct}
\lVert u\,v \rVert_s\le C(N)\,\lVert u \rVert_s\lVert v \rVert_{\gots_0}+C(s,N) \lVert u \rVert_{\gots_0}\lVert v \rVert_s. 
\end{equation}

Here $C(N),C(s,N)$ are positive constants independent of $u,v$. The above estimates are classical and one can see for instance the Appendix of \cite{BBoP} for the proof.

\paragraph{Lipschitz norm.} Fix $N\in\mathbb{N}$, $N\geq 1$ and let $\calO$ be a compact subset of $\mathbb{R}^{N}$. 
For a function $u\colon \calO\to E$, where $(E, \lVert \cdot \rVert_E)$ is a Banach space, we define the sup-norm and the lip-seminorm of $u$ as
\begin{equation}\label{suplip}
\begin{aligned}
&\lVert u \rVert_E^{\sup}:=\lVert u \rVert_{E}^{\sup, \calO}:=\sup_{\xi\in\calO} \lVert u(\xi) \rVert_E,\\
&\lVert u \rVert_{E}^{lip}:=\lVert u \rVert_{E}^{lip, \calO}:=\sup_{\substack{\xi_1, \xi_2\in \calO,\\ \xi_1\neq \xi_2}} \frac{\lVert u(\xi_1)-u(\xi_2)\rVert_E}{\lvert \xi_1-\xi_2\rvert}.
\end{aligned}
\end{equation}
Fix $\gamma>0$.
We will use the following Lipschitz norms 
\begin{equation}\label{tazza10}
\lVert u \rVert_s^{\g, \calO}:=\lVert u \rVert_s^{sup, \calO}+\gamma \lVert u \rVert_{s-1}^{lip, \calO}, \quad u\in H^s, \quad \forall s\geq [N/2]+ 3,
\end{equation}
\begin{equation}\label{tazzone}
\lvert m \rvert^{\gamma, \calO}:=\lvert m \rvert^{sup, \calO}+\gamma \lvert m \rvert^{lip, \calO}, \quad m\in\mathbb{R}.
\end{equation}

\paragraph{Diffeomorphisms of the torus.} We  consider diffeomorphisms of the $N$-dimensional torus
\begin{equation}\label{lenovo}
\Phi\colon \T^{N}\to \T^{N}, \qquad   \Phi\colon \theta  \mapsto \theta + h(\theta)= \widehat{\theta}
\end{equation}
where $h : \T^N  \to \R^N$ is some $C^{\infty}$ function with $\|\beta\|_{\gots_0+1}\le \frac12 $.   We denote the inverse diffeomorphism  as
\[
\Phi^{-1}\colon \T^{N}\to \T^{N}, \qquad   \Phi^{-1}\colon \theta  \mapsto \theta + \tilde h(\theta)
\]
with $\tilde{h}$ a $C^{\infty}$ function. 
 With an abuse of notation we  identify transfomations like \eqref{lenovo} with the {corrsponding linear} operators acting on  $H^s(\T^{N})$  as
\begin{equation}\label{grotta3}
\Phi\colon H^s\to H^s, \qquad\Phi h(\theta)=h(\theta +h(\theta)).
\end{equation}
Similarly we consider the action of $\Phi$ on the vector fields on $\T^{N}$ by the pushforward.  Explicitly
we denote by $ \mbox{T}\, (\T^{N})$ the tangent space of  $\T^{N}$.\\ Now given a vector field $X\colon \T^{N}\to \mbox{T}\, (\T^{N})$
\begin{equation}\label{grotta4}
\begin{aligned}
X(\theta)
& = \sum_{j=1}^{N} X_j(\theta)\,\frac{\partial}{\partial \theta_j}\,,\quad X_1, \ldots, X_N \in C^{\infty}(\T^{N},\R)
\end{aligned}
\end{equation}
its pushforward is
\[
(\Phi_*X)(\theta)=d \Phi (\Phi^{-1}(\theta)) [X (\Phi^{-1}(\theta))]= \sum_{i=1}^N\Phi^{-1}\left( X_i +\sum_{j=1}^N \frac{\partial{X_i}}{\partial \theta_j} X_j\right)\frac{\partial}{\partial \widehat \theta_i}.
\]

\medskip

We refer to the Appendix \ref{lemmitecnici} for technical lemmata on the tameness properties of the Lipschitz, Sobolev norms and bounds for the diffeomorphisms of the torus. 

\medskip

\paragraph{Reversible vector fields.}
Let $S : \T^N \to \T^N$ be the involution $\theta \mapsto - \theta$. We say that a vector field $ X: \T^N \to T(\T^N)$ is {\it reversible} if 
$$
X \circ S = - S \circ X\,.
$$
This is equivalent to say that $X$ is even w.r. to the variable $\theta$. A diffeomorphism of the torus $\Phi : \T^N \to \T^N$ is said to be {\it reversibility preserving} if 
$$
\Phi \circ S = S \circ \Phi\,.
$$
The above definition is equivalent to require that $\Phi$ is odd w.r. to the variable $\theta$. It is a straightforward calculation to verify that if $X$ is reversible and $\Phi$ is reversibility preserving, then the push-forward $\Phi_* X$ is still reversible. 

\paragraph{Linear operators}
A $C^\infty$ vector field $X(\theta) = \sum_{j = 1}^N X_j(\theta) \frac{\partial}{\partial \theta_j}$ induces a linear operator acting on then space of functions $u : \T^N \to \R$, that we denote by $X(\theta) \cdot \partial_\theta = \sum_{j = 1}^N X_j(\theta) \partial_{\theta_j}$. More precisely, the action of such a linear operator is given by 
$$
H^{s}(\T^N, \R) \to H^{s - 1}(\T^N, \R)\,, \quad u(\theta) \mapsto X(\theta) \cdot \partial_\theta u(\theta).
$$

\section{A tame version of Moser's  theorem}\label{tiraddrizzotutto}

Our first result, contained in Theorem \ref{moserMic}, is a revisited version of Moser's theorem on weakly perturbed vector fields on the torus. The main difference relies on the type of estimates that are provided for the changes of coordinates that one performs in order to reduce the perturbed vector field to constant coefficients. These estimates are \emph{tame} in the sense that the highest norms appear linearly. This kinds of bounds allow to construct iteratively a  smooth change of coordinates by requiring smallness conditions only on a fixed {\it low} Sobolev norm of the first perturbation $f_0$. 

\medskip

Let us fix $N\in\mathbb{N}$, $N\geq 1$, $\cO_0$ a compact subset of $\mathbb{R}^{N}$ with positive Lebesgue measure and consider
\begin{equation}\label{tripoli}
\tau:=N +2, \quad s_0\geq [N/2]+ 3, \quad \gamma\in (0, 1).
\end{equation}
\paragraph{Notations.}
We denote for a vector valued function $u(\theta)$ its average as 
\[
\langle u \rangle:=\frac{1}{(2 \pi)^{N}}\int_{\T^{N}} u\,d\theta.
\]
We  denote any constant depending only on $N,\cO_0$ as $C$ and correspondingly we say $a\lesssim b$
if $a\le C b$. A constant depending on parameters $p$ is denoted by $C_p$ and as above  we say  $a\lesssim_p b$
if $a\le C_p b$.

\begin{teor}[Tame Moser Theorem]\label{moserMic}
	Consider for $\xi  \in \cO_0  \subseteq \R^{N}$ a Lipschitz family of vector fields  on $\T^{N}$
	\begin{equation}\label{mao1}
	X_0:=	(\xi   + f_0(\theta;\xi)) \cdot \frac{\partial}{\partial \theta}
	\end{equation}
	\begin{equation}
	\quad f_0(\cdot;\xi)\in H^s(\T^{N},\R^N)\quad  \forall s\geq s_0.
	\end{equation}
	There exists $s_1 \geq s_0+2\tau+4$,  and $\eta_\star=\eta_\star(s_1)>0$ such that if
	\begin{equation}\label{picci}
	\g^{-1} \|f_0\|^{\g,\mathcal O_0}_{s_1} :=\delta \le \eta_\star 
	\end{equation}
	then there exists a Lipschitz  function $\alpha_\infty : \cO_0 \to \R^N, \xi \mapsto \alpha(\xi)$ with  
	\begin{equation}\label{tordo4}
	\qquad \lvert \alpha_\infty- \rm{ Id}_{\R^N} \rvert^{\gamma,\cO_0}\leq \gamma \delta,\,
	\end{equation}
	such that in the set
	\begin{equation}\label{buoneacque}
	\cO_{\infty}^{2 \gamma}:= \Big\{\xi \in \cO_0: \; |\alpha_\infty(\xi)\cdot k |>\frac{2\g }{\langle k\rangle^{\tau}}\,,\;\forall k \in \Z^N \setminus \{0\}\Big\}
	\end{equation}
	the following holds. There exists a map
	\begin{equation}\label{perunpugnodidollari}
	\beta: \cO_{\infty}^{2 \gamma}\times \T^{N}\to \R^N \,,\quad  \|\beta \|^{\g,\cO_{\infty}^{2 \gamma}}_{s} \lesssim_s  \g^{-1}\|f\|^{\g,\cO_0}_{s+2\tau+4},
	\quad \forall s\geq s_0,\, _{n+1}{s\in\mathbb{N}}
	\end{equation}
	so that $\Psi: \theta \mapsto \theta+\beta(\theta)=\widehat \theta$ is a diffeomorphism of $\T^{N}$ and for all $\xi \in \cO_{\infty}^{2 \gamma}$
	\begin{equation}\label{tordo6}
	\begin{aligned}
	\Psi_* X_0 & :=  (\Psi)^{-1}\big(\xi + f_0 + (\xi + f_0)\cdot \partial_\theta \beta \big) \cdot \frac{\partial}{\partial \widehat\theta} = \alpha_\infty(\xi)\cdot \frac{\partial}{\partial \widehat\theta}\; .
	\end{aligned}
	\end{equation}
	Furthermore, if $f_0$ is a reversible vector field (i.e. $f_0 = {\rm even}(\theta)$), then the diffeomorphism $\theta \mapsto \theta + \beta(\theta)$ is reversibility preserving (i.e. the function $\beta = {\rm odd}(\theta)$). \\
{	Finally, the Lebesgue measure of the set ${\cal O}_0 \setminus {\cal O}_\infty^{2 \gamma}$ satisfies the bound 
	\begin{equation}\label{stima di misura 0}
	|{\cal O}_0 \setminus {\cal O}_\infty^{2 \gamma}| \lesssim \gamma\,. 
	\end{equation}}
\end{teor}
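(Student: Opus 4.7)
The plan is to construct $\Psi$ and $\alpha_\infty$ as limits of a Nash--Moser type KAM scheme in which each step removes the low-frequency part of the current perturbation via a close-to-identity torus diffeomorphism $\Phi_n(\theta)=\theta+g_n(\theta)$. Fix geometric schedules $N_n := N_0^{(3/2)^n}$ and $\delta_n := \delta^{(3/2)^n}$, and set $\mu:=2\tau+4$, which will be the total loss of derivatives appearing in \eqref{perunpugnodidollari}. The inductive hypothesis at step $n$ should provide a vector field
\begin{equation*}
X_n = \bigl(\alpha_n(\xi)+f_n(\xi,\theta)\bigr)\cdot \tfrac{\partial}{\partial \theta}, \qquad \alpha_n = \xi + \textstyle\sum_{j<n} \langle f_j\rangle,
\end{equation*}
defined on a nested Cantor set $\cO_n\subseteq \cO_0$ on which the Melnikov conditions $|\alpha_n(\xi)\cdot k|\geq \gamma\langle k\rangle^{-\tau}$ hold for all $0<|k|\leq N_n$, together with the low-norm smallness $\|f_n\|_{s_1}^{\gamma,\cO_n}\leq \gamma \delta_n$ and a controlled high-norm growth $\|f_n\|_s^{\gamma,\cO_n}\leq C_s \|f_0\|_{s+\mu}^{\gamma,\cO_0}$ for every integer $s\geq s_0$.

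\smallskip

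\textbf{KAM step.} To pass from $n$ to $n+1$, I would solve by Fourier series the homological equation in \eqref{omologica alpha introduzione}, namely
\begin{equation*}
\alpha_n\cdot \partial_\theta g_n = -\Pi_{N_n}\bigl(f_n - \langle f_n\rangle\bigr),
\end{equation*}
whose unique zero-average solution satisfies the tame bound $\|g_n\|_s^{\gamma,\cO_n}\lesssim_s \gamma^{-1}N_n^{\tau} \|f_n\|_s^{\gamma,\cO_n}$; the extra factor $\gamma^{-1}N_n^{\tau+1}$ needed to differentiate $1/(\alpha_n\cdot k)$ with respect to $\xi$ is what dictates $\mu=2\tau+4$. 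Setting $\Phi_n(\theta)=\theta+g_n(\theta)$ and computing the pushforward by the formula recalled in Section \ref{sezione functional setting} gives
\begin{equation*}
X_{n+1}=(\Phi_n)_*X_n = \bigl(\alpha_{n+1}+f_{n+1}\bigr)\cdot \tfrac{\partial}{\partial \widehat\theta}, \qquad \alpha_{n+1} = \alpha_n + \langle f_n\rangle,
\end{equation*}
with $f_{n+1}$ the sum of the smoothing error proportional to $\Pi_{N_n}^\bot f_n$ and of terms quadratic in $g_n,f_n$. Combining the standard smoothing bound $\|\Pi_{N_n}^\bot v\|_{s_1}\lesssim N_n^{-(s_b-s_1)}\|v\|_{s_b}$ with $s_b:=s_1+\mu$, the tame product estimate \eqref{TameProduct} for the quadratic terms, and the composition lemmas of Appendix \ref{lemmitecnici}, one obtains $\|f_{n+1}\|_{s_1}^{\gamma,\cO_{n+1}}\leq \gamma\delta_{n+1}$ and, at high index, $\|f_{n+1}\|_s^{\gamma,\cO_{n+1}}\leq (1+C\delta_n^{1/2})\|f_n\|_s^{\gamma,\cO_n}$. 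This is essentially Lemma \ref{lemma KAM step}, and iterating it yields Lemma \ref{ittero}.

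\smallskip

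\textbf{Passage to the limit.} By the summability $\sum_n\delta_n<\infty$, the sequences $\alpha_n$ and $h_n$ (where $\Psi^{(n)} = \Phi_0\circ\cdots\circ\Phi_n = \mathrm{Id}+h_n$) are Cauchy. For $\alpha_n$ this is immediate from the increments $\langle f_n\rangle$; a Whitney-type extension produces a Lipschitz $\alpha_\infty:\cO_0\to\R^N$ satisfying \eqref{tordo4}. For $h_n$ one uses the telescoping identity $h_{n+1}=h_n+g_{n+1}\circ(\mathrm{Id}+h_n)$ together with the composition estimates to show that the partial sums are Cauchy in every $H^s$ with the announced bound \eqref{perunpugnodidollari}, producing $\beta\in C^\infty(\T^N,\R^N)$ of size $\tfrac12$ at the $\gots_0+1$ norm, so that $\Psi=\mathrm{Id}+\beta$ is a torus diffeomorphism. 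Passing to the limit in $(\Phi_0\circ\cdots\circ\Phi_n)_* X_0 = (\alpha_{n+1}+f_{n+1})\cdot\partial_{\theta}$ yields \eqref{tordo6}. The inclusion $\cO_\infty^{2\gamma}\subseteq \bigcap_n\cO_n$ is the standard Melnikov propagation: since $|\alpha_\infty-\alpha_n|\lesssim \delta_n$ and $N_n^\tau\delta_n\to 0$, the condition $|\alpha_\infty\cdot k|>2\gamma\langle k\rangle^{-\tau}$ for $|k|\leq N_n$ implies $|\alpha_n\cdot k|\geq \gamma\langle k\rangle^{-\tau}$.

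\smallskip

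\textbf{Measure, reversibility, and the main obstacle.} For \eqref{stima di misura 0} I would exploit that $\alpha_\infty$ is a Lipschitz perturbation of the identity with Lipschitz constant at most $1/2$, so for every $k\neq 0$ the bad set $\{\xi\in\cO_0: |\alpha_\infty(\xi)\cdot k|\leq 2\gamma\langle k\rangle^{-\tau}\}$ has measure $\lesssim \gamma\langle k\rangle^{-\tau-1}$; summing over $k\in\Z^N\setminus\{0\}$ converges because $\tau+1 = N+3>N$, and gives $|\cO_0\setminus\cO_\infty^{2\gamma}|\lesssim \gamma$. Reversibility propagates inductively: if $f_n$ is even in $\theta$, the right-hand side of the homological equation is even, hence $g_n$ is odd and $\Phi_n$ is reversibility-preserving; a direct computation then shows that the pushforward of a reversible field by a reversibility-preserving diffeomorphism is again reversible. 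The main obstacle I expect is the fine tuning of the two-index scheme: the Lipschitz loss $\gamma^{-2}N_n^{2\tau+1}$ produced at each step by the small divisors is not summable on its own, and can be absorbed only by carefully splitting the super-exponential smallness $\delta_n$ between the smoothing term (which requires the loss of $\mu$ derivatives) and the quadratic term, while keeping the high-norm estimate from growing faster than linearly in $n$. This is precisely the reason why $s_1=s_0+2\tau+4$ is the sharp threshold in the low-norm hypothesis \eqref{picci}.
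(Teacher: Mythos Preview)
Your approach is the same as the paper's---truncated homological equations, a two-scale Nash--Moser iteration, convergence of $h_n$ and $\alpha_n$ in every $H^s$, the Melnikov inclusion $\cO_\infty^{2\gamma}\subseteq\bigcap_n\cO_n$, the lipeomorphism measure estimate, and inductive propagation of reversibility---and this structure is correct.

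There is, however, a concrete numerological gap. You set $\mu=2\tau+4$ and conclude that ``$s_1=s_0+2\tau+4$ is the sharp threshold'', but the theorem only asserts the \emph{existence} of some $s_1\geq s_0+2\tau+4$, and the iteration (cf.\ \eqref{tordo3}) in fact needs an internal decay exponent $\mu>4\tau+2s_0+4$ together with $s_1>\tfrac32\mu+s_0$. The reason your choice does not close is the quadratic remainder: at the low norm it contributes $\gamma^{-1}K_n^{2\tau+2}\delta_n(s_0)^2$, and forcing this below $K_{n+1}^{-\mu}\delta_0(s_1)$ requires $(2-\chi)\mu>2\tau+2$ with $\chi=\tfrac32$, i.e.\ $\mu>4\tau+4$; the smoothing error $K_n^{s_0-s_1}\delta_n(s_1)$ then forces $s_1-s_0>\chi\mu$. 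With $\mu=2\tau+4$ neither condition is met.

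A second bookkeeping difference: in the paper the decaying low norm is measured at $s_0$, the bounded intermediate norm at $s_1$, and the high-norm estimate $\delta_n(s)\leq C(s)\,\delta_0(s)$ in \eqref{colomba1} carries \emph{no} loss at the level of $f_n$. The loss $2\tau+4$ in \eqref{perunpugnodidollari} for $\beta$ is not built into the inductive hypothesis on $f_n$; it appears only when one bounds $g_n$ and $h_n$, via the interpolation \eqref{colomba 1 2} between $s_0$ and $s+1$. Your version, which places both the decay and the loss at the $f_n$ level, can be made to work, but not with the constants you wrote.
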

\begin{remark}
	Note that condition \eqref{tordo4} implies that the map $\xi\mapsto \alpha(\xi)$ is a lipeomorphism and 
	$|\xi(\al)|^{lip}\le 2$. Hence the estimate of the measure of the complementary set $\cO_0\setminus\cO^{2\g}_\infty$ is trivial.
\end{remark}

\begin{cor}{(Parameter dependence)}\label{cor1}
	Under the assumptions of Theorem \ref{moserMic}, suppose that $f_0=f_0(\theta, \lambda; \xi )$ depends on some parameter $\lambda\in B_{E}$, where $B_E$ is a ball centered at the origin of a Banach space $E$, and denote with $\Delta_{12} g(\lambda):=g(\lambda_1)-g(\lambda_2)$ for some $\lambda_1, \lambda_2\in B_E$. Then the frequency vector $\alpha_\infty=\alpha_{\infty}(\xi, \lambda)$, 
	given by Theorem \ref{moserMic} applied to $f_{0}(\theta,\lambda;\xi)$, satisfies 
	the following:
	\begin{itemize}
		\item[$(i)$] recalling \eqref{buoneacque}, for $\xi \in \calO^{2\gamma}_{\infty}(\lambda_1)\cap \calO^{2\gamma}_{\infty}(\lambda_2)$
		\begin{equation}
		\lvert \Delta_{12} \alpha_{\infty} \rvert\le 2 \lvert \Delta_{12} \langle f_0 \rangle \rvert.
		\end{equation}
		\item[$(ii)$] There exists a  positive constant $\tb$ satisfying $s_0+\tb < s_1$,  such that we have the following estimate for $\xi  \in \calO^{2\gamma}_{\infty}(\lambda_1)\cap \calO^{2\gamma}_{\infty}(\lambda_2)$:
		\begin{equation}
		\lVert \Delta_{12} \beta   \rVert_{s_0-1}\lesssim_{s_1} \gamma^{-1} \lVert \Delta_{12} f_0   \rVert_{s_0+\tb}.
		\end{equation}

	\end{itemize}

\end{cor}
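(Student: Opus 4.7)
The plan is to track the $\lambda$-dependence through the KAM iteration of Theorem \ref{moserMic}. For $\xi$ in the intersection $\calO^{2\gamma}_\infty(\lambda_1)\cap \calO^{2\gamma}_\infty(\lambda_2)$, both sequences of approximate frequencies $\alpha_n(\xi,\lambda_i)$ ($i=1,2$) satisfy, by the estimate $|\alpha_\infty-\alpha_n|\lesssim \g\de$ inherited from the iterative lemma, the Diophantine conditions $|\alpha_n\cdot k|\geq \g\langle k\rangle^{-\tau}$ for all $|k|\le N_n$. Consequently the homological equations \eqref{omologica alpha introduzione} are solvable for both parameters, and the iterative scheme (sequences $f_n,g_n,\Phi_n,\Psi^{(n)},\alpha_n$) can be run simultaneously for $\lambda_1$ and $\lambda_2$ with the bounds of Theorem \ref{moserMic}.

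\textbf{Proof of (i).} Since $\alpha_0=\xi$ does not depend on $\lambda$ and $\alpha_{n+1}=\alpha_n+\langle f_n\rangle$, we get by telescoping
\begin{equation*}
\Delta_{12}\alpha_\infty \;=\; \Delta_{12}\langle f_0\rangle + \sum_{n\ge 1}\Delta_{12}\langle f_n\rangle.
\end{equation*}
The KAM step expresses $f_{n+1}$ as a quadratic functional of $f_n$ and $g_n$, composed with $\Phi_n^{-1}$; combining this with the identity $\langle \Pi_{N_n}^\bot f_n\rangle=0$ and a change of variables gives $|\langle f_{n+1}\rangle|\lesssim \g^{-1}\|f_n\|_{s_0}^2$. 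An analogous manipulation of $\Delta_{12}f_{n+1}$ together with the product/Moser estimates yields the inductive bound $|\Delta_{12}\langle f_n\rangle|\lesssim \de_n\,|\Delta_{12}\langle f_0\rangle|$ with $\de_n$ geometrically small (this is where the quadratic convergence enters, via $\de_{n+1}\lesssim \g^{-1}\|f_n\|_{s_0}\de_n$). Summing the geometric tail and using the smallness assumption $\de\le \eta_\star$, one obtains $|\Delta_{12}\alpha_\infty|\le (1+C\de)|\Delta_{12}\langle f_0\rangle|\le 2|\Delta_{12}\langle f_0\rangle|$.

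\textbf{Proof of (ii).} From $\Psi^{(n+1)}=\Psi^{(n)}\circ \Phi_{n+1}$ one derives the telescoping identity $h_{n+1}(\theta)=h_n(\theta+g_{n+1}(\theta))+g_{n+1}(\theta)$, and passing to the limit gives $\beta=\lim_n h_n=\sum_{n\ge 0} g_n\circ \Psi^{(n-1)}$ (with $\Psi^{(-1)}=\mathrm{Id}$) in $H^{s_0}$. Taking $\Delta_{12}$ of the homological equation $\alpha_n\cdot\partial_\theta g_n = \langle f_n\rangle - \Pi_{N_n}f_n$ and solving mode by mode on $|k|\le N_n$ yields
\begin{equation*}
\|\Delta_{12}g_n\|_{s_0-1} \lesssim \g^{-1}\bigl(\|\Delta_{12}f_n\|_{s_0-1+\tau}+|\Delta_{12}\alpha_n|\,\|g_n\|_{s_0+\tau}\bigr),
\end{equation*}
where the second term comes from differentiating $1/(\alpha_n\cdot k)$. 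Using part (i), the bound on $\Delta_{12}f_n$ from the previous step, the tame composition estimates of Appendix \ref{lemmitecnici}, and the quadratic smallness of $g_n$, one closes the induction and obtains $\|\Delta_{12}(g_n\circ \Psi^{(n-1)})\|_{s_0-1}\lesssim \g^{-1}\mu_n\|\Delta_{12}f_0\|_{s_0+\tb}$ with $\sum \mu_n<\infty$, choosing $\tb$ large enough to absorb the finite number of losses $\tau$ and the tame constants in the composition. This yields the announced estimate.

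\textbf{Main obstacle.} The delicate point is the quadratic propagation of $\Delta_{12}f_n$: since $f_{n+1}$ is obtained from $f_n,g_n$ by a product and a composition with $\Phi_n^{-1}$, taking differences produces the factor $\Delta_{12}\Phi_n^{-1}$, which itself depends through $g_n$ on $\Delta_{12}\alpha_n$ and $\Delta_{12}f_n$; the induction must be set up so that these mutual dependences do not break the quadratic smallness, and so that the fixed loss $\tb$ suffices for all $n$. This is achieved by exploiting the separation between the low Sobolev index $s_0-1$ in which the differences are measured and the higher index $s_1$ in which the iterative estimates of Theorem \ref{moserMic} provide uniform tame control.
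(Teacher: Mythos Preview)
Your proof of (i) has a genuine gap. The inductive claim $|\Delta_{12}\langle f_n\rangle|\lesssim \delta_n\,|\Delta_{12}\langle f_0\rangle|$ cannot be obtained by ``an analogous manipulation of $\Delta_{12}f_{n+1}$'': when you take $\Delta_{12}$ of
\[
f_{n+1}=\Phi_n^{-1}\bigl(\Pi_{K_n}^\perp f_n+f_n\cdot\partial_\theta g_n\bigr),
\]
the Leibniz-type expansion produces terms with $\Delta_{12}f_n$, $\Delta_{12}g_n$ and $\Delta_{12}\Phi_n^{-1}$, all of which depend on the \emph{full} function $\Delta_{12}f_n$, not just on its average. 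Concretely, if $\Delta_{12}\langle f_0\rangle=0$ but $\Delta_{12}f_0\neq 0$, then $g_1(\lambda_1)\neq g_1(\lambda_2)$ and hence $\Delta_{12}\langle f_1\rangle\neq 0$ in general; your inequality would force it to vanish. So the telescoping argument cannot terminate with a bound by $|\Delta_{12}\langle f_0\rangle|$ alone.

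The paper does not attempt this. Its iterative control of the $\lambda$-variations (Lemma~\ref{ittero}, item $(\mathcal P_3)_n$) tracks $\Delta_{12}f_n$ in \emph{two} Sobolev norms: the low norm $\|\Delta_{12}f_n\|_{s_0-1}$ decays like $K_n^{-\mu}$ while the high norm $\|\Delta_{12}f_n\|_{s_0+\tb}$ grows like $K_n^{\kappa}$, and the choice of $\mu,\kappa,\tb$ is made so that the high-norm growth, fed back through the smoothing term $K_n^{-1-\tb}\|\Delta_{12}f_n\|_{s_0+\tb}$ coming from $\Pi_{K_n}^\perp$, is absorbed by the low-norm decay (see \eqref{tordo} and \eqref{stima Delta 12 an bassa}--\eqref{stima Delta 12 an alta}). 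Summing $|\Delta_{12}(\alpha_{n+1}-\alpha_n)|\le\|\Delta_{12}f_n\|_{s_0-1}$ then yields the bound $|\Delta_{12}\alpha_\infty|\lesssim\|\Delta_{12}f_0\|_{s_0+\tb}$ of \eqref{Delta 12 m n} --- note that this is in terms of a Sobolev norm of $\Delta_{12}f_0$, not of $|\Delta_{12}\langle f_0\rangle|$.

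For (ii) your outline is along the right lines, but it elides precisely this two-norm mechanism, which is the technical heart of the argument. In the paper the estimate on $\|\Delta_{12}h_n\|_{s_0-1}$ (equation \eqref{stima Delta 12 beta n}) is obtained from the recursion \eqref{formula iterativa beta n} together with the bound \eqref{stima Delta 12 alpha n} on $\|\Delta_{12}g_n\|_{s_0-1}$; the latter already requires both \eqref{stima Delta 12 an bassa} and \eqref{Delta 12 m n}, hence the coupled low/high induction. Your sketch should make this coupling explicit rather than invoking ``tame constants'' and ``choosing $\tb$ large enough'' abstractly.
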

As explained in the introduction we now divide the variables $\theta$ in time and space, $\theta=(\varphi,x)\in \T^{\nu+d}$. Similarly we write $\xi=(\omega,\zeta)\in \R^{\nu+d}$. We have the following result
\begin{prop}\label{esempio1}
Consider for $(\omega,\zeta)  \in \cO_0  \subseteq \R^{\nu+d}$ a Lipschitz family of vector fields  on $\T^{\nu+d}$
\begin{equation}\label{mao1bbb
}
X_0:=	\omega\cdot \frac{\partial}{\partial \varphi}  +( \zeta+ a_0(\varphi,x;\omega,\zeta)) \cdot \frac{\partial}{\partial  x}
\end{equation}
\begin{equation}\label{a 0 Hs}
\quad a_0(\cdot;\omega,\zeta)\in H^s(\T^{\nu+d},\R^{d})\quad  \forall s\geq s_0.
\end{equation}
Fix $s_1,\eta_\star$ as in Theorem \ref{moserMic},  if
\begin{equation}\label{piccib}
\g^{-1} \|a_0\|^{\g,\mathcal O_0}_{s_1} :=\delta \le \eta_\star 
\end{equation}
then there exists a Lipschitz  function $m : \cO_0 \to \R^{d}, (\omega,\zeta) \mapsto m_\infty(\omega,\zeta)$ such that denoting $\alpha_\infty(\omega,\zeta)=(\omega,m_\infty(\omega,\zeta))$ we have
\begin{equation}\label{tordo4b}
\qquad \lvert  \alpha_\infty- \rm{ Id}_{\R^{\nu+d}}  \rvert^{\gamma}\leq \gamma \delta, \,
\end{equation}
such that in the set
\begin{equation}\label{buoneacqueb}
\cO_{\infty}^{2 \gamma}:= \Big\{(\omega,\zeta) \in \cO_0: \; |\omega \cdot \ell +m_\infty(\omega,\zeta)\cdot j  |>\frac{2\g }{\langle \ell,j\rangle^{\tau}}\,,\;\forall (\ell,j)\in \Z^{\nu+d} \setminus \{0\}\Big\}
\end{equation}
the following holds. There exists a map
\begin{equation}\label{perunpugnodidollarib}
\beta:  \T^{\nu+d}\times \cO_{\infty}^{2 \gamma}\to \R^d \,,\quad  \|\beta \|^{\g,\cO_{\infty}^{2 \gamma}}_{s} \lesssim_s  \g^{-1}\|a_0\|^{\g,\cO_0}_{s+2\tau+4},
\quad \forall s\geq s_0,\,  {s\in\mathbb{N}}
\end{equation}
so that $\Psi: (\varphi,x) \mapsto (\varphi,x+\beta(\varphi,x;\omega,\zeta))= (\varphi,\widehat x)$ is a diffeomorphism of $\T^{\nu+d}$ and for all $(\omega,\zeta) \in \cO_{\infty}^{2 \gamma}$
\begin{equation}\label{tordo6b}
\begin{aligned}
\Psi_* X_0 & := \omega\cdot \frac{\partial}{\partial \varphi}  + \Psi^{-1}\big(\omega\cdot \partial_{\varphi}\beta + \zeta + a_0 +  (\zeta + a_0)  \cdot \partial_x \beta \big) \cdot \frac{\partial}{\partial \widehat x} \\
& = \omega\cdot \frac{\partial}{\partial \varphi}  + m_\infty (\omega, \zeta) \cdot \frac{\partial}{\partial \widehat x} .
\end{aligned}
\end{equation}
Furthermore if $a_0$ is a reversible vector field (i.e. $a_0 = {\rm even}(\varphi, x)$) then the diffeomorphism $(\varphi, x) \mapsto \big(\varphi, x + \beta(\varphi, x) \big)$ is reversibility preserving (i.e. $\beta = {\rm odd}(\varphi, x)$). 
\end{prop}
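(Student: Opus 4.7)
The plan is to deduce Proposition \ref{esempio1} directly from Theorem \ref{moserMic}. First I would set $N := \nu + d$, identify $\theta = (\varphi, x)$ and $\xi = (\omega, \zeta) \in \cO_0 \subset \R^{\nu+d}$, and write $X_0$ in \eqref{mao1bbb} in the form \eqref{mao1} with perturbation $f_0(\theta; \xi) := (0, a_0(\varphi, x; \omega, \zeta)) \in \R^\nu \times \R^d$. Under this dictionary, the smallness hypothesis \eqref{piccib} coincides with \eqref{picci}, the parameter set \eqref{buoneacqueb} coincides with \eqref{buoneacque} upon writing $k = (\ell, j)$, and Theorem \ref{moserMic} produces a Lipschitz $\alpha_\infty$, a shift $\beta$, and the conjugation \eqref{tordo6}.

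The core of the argument is showing that the block structure of $f_0$ --- namely that its first $\nu$ components vanish --- is preserved along the entire KAM iteration $X_n \mapsto (\Phi_n)_* X_n$ sketched in the introduction. I would verify this inductively: assuming $\alpha_n = (\omega, \alpha_n^{(x)})$ and $f_n = (0, f_n^{(x)})$, the homological equation \eqref{omologica alpha introduzione} has the scalar operator $\alpha_n \cdot \partial_\theta$ acting componentwise on a vector unknown, so its zero-mean solution $g_n$ inherits the block structure $g_n = (0, \tilde g_n)$. Hence $\Phi_n(\varphi, x) = (\varphi, x + \tilde g_n(\varphi, x))$ is the identity in $\varphi$, its Jacobian is lower triangular in blocks, and the first block of the pushforward $(\Phi_n)_* X_n$ reduces to $(\omega + f_n^{(\varphi)}) \circ \Phi_n^{-1} = \omega$. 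Since $\alpha_{n+1} := \alpha_n + \langle f_n \rangle$ already has first component $\omega$, the remainder $f_{n+1}$ must satisfy $f_{n+1}^{(\varphi)} \equiv 0$, closing the induction.

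Passing to the limit, the function produced by Theorem \ref{moserMic} has the form $\alpha_\infty(\xi) = (\omega, m_\infty(\omega, \zeta))$ for some Lipschitz $m_\infty$, and the shift takes the form $\beta = (0, \tilde\beta)$; renaming $\tilde\beta$ as $\beta$ yields the diffeomorphism $\Psi(\varphi, x) = (\varphi, x + \beta(\varphi, x))$ of the desired form. The estimates \eqref{tordo4b} and \eqref{perunpugnodidollarib} then follow immediately from \eqref{tordo4} and \eqref{perunpugnodidollari}, and the conjugation \eqref{tordo6b} is simply the second-block component of \eqref{tordo6} (the first block collapses to the trivial identity $\omega = \omega$). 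The reversibility statement carries over verbatim since the evenness of $a_0$ in $(\varphi, x)$ is exactly the evenness of $f_0$ in $\theta$. I do not expect any serious obstacle: the entire argument is a structural observation that Moser's iteration commutes with the product decomposition $\R^\nu \times \R^d$, and the only step requiring a direct verification is the inductive pushforward computation, which is elementary once one notes the lower-triangular block form of $d\Phi_n$.
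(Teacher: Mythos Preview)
Your proposal is correct and takes essentially the same approach as the paper. In fact the paper does not give an explicit proof of Proposition~\ref{esempio1} at all: it simply records in the remark immediately following the statement that when $f_0$ in Theorem~\ref{moserMic} has the form $f_0=(0,\dots,0,a_0)$, then $\alpha_\infty$ and the change of variables $\beta$ inherit the same block structure. Your inductive verification that $g_n=(0,\tilde g_n)$, $\alpha_n=(\omega,\alpha_n^{(x)})$, and $f_n=(0,f_n^{(x)})$ via the explicit formulae \eqref{defbeta} and \eqref{aPiu} is exactly the argument the remark leaves implicit, and your treatment of the lower-triangular Jacobian of $\Phi_n$ is the right way to see that the $\varphi$-block of the pushforward stays equal to $\omega$.
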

\begin{remark}
In the previous proposition we are stating that if $f_0$ in Theorem \ref{moserMic} has the form $f_0=(0,\dots,0,a_0)$ then also the frequency $\al_\infty$ and the change of variables $\beta$ have the same form.
\end{remark}
We now wish to consider the case where $\zeta$ is not an independent parameter but a given function of $\omega$ . We have the following
\begin{cor}\label{cor m 0 omega a 0 omega}
	Consider for $\omega  \in \Omega_0  \subseteq \R^{\nu}$ a Lipschitz family of vector fields  on $\T^{\nu+d}$
	\begin{equation}\label{mao1b}
	X_0:=	\omega\cdot \frac{\partial}{\partial \varphi}  +( m_0{(\omega)}+ a_0(\varphi,x;\omega)) \cdot \frac{\partial}{\partial  x}.
	\end{equation}
Here $m_0(\omega)$ is a Lipschitz function and
	\begin{equation}\label{pasta col pomodoro}
	\quad a_0(\cdot;\omega)\in H^s(\T^{\nu+d},\R^{d})\quad  \forall s\geq s_0.
	\end{equation}
	Fix $s_1,\eta_\star$ as in Theorem \ref{moserMic},   if
	\begin{equation}\label{piccic}
	\g^{-1} \|a_0\|^{\g,\Omega_0}_{s_1} :=\delta \le \eta_\star  
	\end{equation}
	then  in the set
	\begin{equation}\label{buoneacquec}
	\Omega_{\infty}^{2 \gamma}:= \Big\{\omega \in \Omega_0: \; |\omega \cdot \ell + m_\infty(\omega,m_0(\omega))\cdot j  |>\frac{2\g }{\langle \ell,j\rangle^{\tau}}\,,\;\forall (\ell,j)\in \Z^{\nu+d} \setminus \{0\}\Big\}
	\end{equation}
	the map $\beta $ of Proposition \ref{esempio1}, restricted to $\zeta=m_0(\beta)$ diagonalizes $X_0$ as in formula \eqref{tordo6b}. Moreover	\begin{equation}\label{prrr}
	\begin{aligned}
	&  \|\beta \|^{\g,\Omega_{\infty}^{2 \gamma}}_{s} \lesssim_s  (1+|m_0|^{lip,\Omega_0})\g^{-1}\|a_0\|^{\g,\Omega_0}_{s+2\tau+4}\,, \\
	& |m_0 - m_\infty|^\gamma \lesssim \| a_0\|_{s_1}^{\gamma, \Omega_0}
	 \end{aligned}
	\end{equation}
	Furthermore if $a_0$ is a reversible vector field (i.e. $a_0 = {\rm even}(\varphi, x)$) then the diffeomorphism $(\varphi, x) \mapsto \big(\varphi, x + \beta(\varphi, x) \big)$ is reversibility preserving (i.e. $\beta = {\rm odd}(\varphi, x)$). 
\end{cor}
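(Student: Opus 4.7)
The plan is to derive the Corollary as a direct consequence of Proposition \ref{esempio1}, by treating $\zeta$ as a dummy parameter and then restricting to $\zeta = m_0(\omega)$. Concretely, I would pick any compact $K \subset \R^d$ containing $m_0(\Omega_0)$, set $\mathcal O_0 := \Omega_0 \times K$, and define the trivial extension $\tilde a_0(\varphi, x; \omega, \zeta) := a_0(\varphi, x; \omega)$. Since $\tilde a_0$ is constant in $\zeta$, one has $\| \tilde a_0 \|_s^{\gamma, \mathcal O_0} = \| a_0 \|_s^{\gamma, \Omega_0}$ for every $s$, so that the smallness hypothesis \eqref{piccic} transfers to \eqref{piccib}. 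Proposition \ref{esempio1} then produces a Lipschitz function $m_\infty : \mathcal O_0 \to \R^d$ together with a map $\beta(\varphi, x; \omega, \zeta)$ defined on the Cantor set $\mathcal O_\infty^{2 \gamma}$ from \eqref{buoneacqueb}, satisfying \eqref{tordo6b}.

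Next I restrict by $\zeta = m_0(\omega)$: by construction $\Omega_\infty^{2 \gamma}$ in \eqref{buoneacquec} is exactly the set of $\omega \in \Omega_0$ for which $(\omega, m_0(\omega)) \in \mathcal O_\infty^{2 \gamma}$. Hence the pair
\[
\widetilde\beta(\varphi, x; \omega) := \beta(\varphi, x; \omega, m_0(\omega)), \qquad \widetilde m_\infty(\omega) := m_\infty(\omega, m_0(\omega))
\]
inherits from \eqref{tordo6b} the desired conjugation, namely the push-forward of $X_0$ under $(\varphi, x) \mapsto (\varphi, x + \widetilde\beta(\varphi, x; \omega))$ equals $\omega \cdot \partial_\varphi + \widetilde m_\infty(\omega) \cdot \partial_{\widehat x}$.

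The estimates in \eqref{prrr} follow from the behaviour of the weighted Lipschitz norm \eqref{tazza10} under composition with the Lipschitz function $\omega \mapsto (\omega, m_0(\omega))$. The sup-part is immediate from \eqref{perunpugnodidollarib}. For the Lipschitz seminorm, given $\omega, \omega' \in \Omega_\infty^{2 \gamma}$, I would split
\[
\beta(\omega, m_0(\omega)) - \beta(\omega', m_0(\omega')) = \bigl[\beta(\omega, m_0(\omega)) - \beta(\omega', m_0(\omega))\bigr] + \bigl[\beta(\omega', m_0(\omega)) - \beta(\omega', m_0(\omega'))\bigr],
\]
and bound each bracket using $\|\beta\|_{s-1}^{lip, \mathcal O_\infty^{2 \gamma}}$ together with $|m_0|^{lip, \Omega_0}$, so that the composition produces exactly the multiplicative factor $(1 + |m_0|^{lip, \Omega_0})$ appearing in \eqref{prrr}. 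The frequency estimate $|m_0 - m_\infty|^\gamma \lesssim \|a_0\|_{s_1}^{\gamma, \Omega_0}$ is extracted from \eqref{tordo4b}, which, written in components, yields $|m_\infty(\omega, \zeta) - \zeta|^{\gamma, \mathcal O_\infty^{2 \gamma}} \leq \gamma \delta = \|a_0\|_{s_1}^{\gamma, \Omega_0}$; specializing to $\zeta = m_0(\omega)$ gives the claim.

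Reversibility is then trivial: the extension $\tilde a_0$ is even in $(\varphi, x)$ whenever $a_0$ is, because the $\zeta$-dependence is constant, hence the last assertion of Proposition \ref{esempio1} ensures that $\beta$ is odd in $(\varphi, x)$, a property preserved by restriction to $\zeta = m_0(\omega)$. The only step requiring some care is the composition estimate for weighted Lipschitz norms, which is the source of the $(1 + |m_0|^{lip, \Omega_0})$ factor; once this is handled, every other statement of the Corollary is a direct bookkeeping translation of the corresponding statement in Proposition \ref{esempio1}.
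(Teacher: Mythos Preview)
Your overall strategy coincides with the paper's: extend $a_0$ trivially in $\zeta$, apply Proposition~\ref{esempio1}, then restrict to $\zeta=m_0(\omega)$. The paper takes ${\cal O}_0$ to be the \emph{graph} $\{(\omega,m_0(\omega)):\omega\in\Omega_0\}$ rather than a product $\Omega_0\times K$, but this is immaterial.

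There is, however, a small gap in your Lipschitz estimate for $\beta$. In your splitting
\[
\beta(\omega,m_0(\omega))-\beta(\omega',m_0(\omega'))=\bigl[\beta(\omega,m_0(\omega))-\beta(\omega',m_0(\omega))\bigr]+\bigl[\beta(\omega',m_0(\omega))-\beta(\omega',m_0(\omega'))\bigr],
\]
the intermediate point $(\omega',m_0(\omega))$ need not lie in the Cantor set ${\cal O}_\infty^{2\gamma}$, so you cannot invoke $\|\beta\|_{s-1}^{lip,{\cal O}_\infty^{2\gamma}}$ on each bracket. The paper sidesteps this by \emph{not} splitting: for $\omega,\omega'\in\Omega_\infty^{2\gamma}$ one directly writes
\[
\frac{\|\beta(\omega,m_0(\omega))-\beta(\omega',m_0(\omega'))\|_{s-1}}{|\omega-\omega'|}
\le \|\widetilde\beta\|_{s-1}^{lip,{\cal O}_\infty^{2\gamma}}\cdot
\frac{|(\omega,m_0(\omega))-(\omega',m_0(\omega'))|}{|\omega-\omega'|}
\le \|\widetilde\beta\|_{s-1}^{lip,{\cal O}_\infty^{2\gamma}}\,(1+|m_0|^{lip,\Omega_0}),
\]
which uses only the two points that are genuinely in ${\cal O}_\infty^{2\gamma}$. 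Replace your splitting by this one-line estimate (or alternatively first extend $\beta$ Lipschitzly to ${\cal O}_0$ via Kirszbraun) and the proof is complete. Note that your argument for $|m_0-m_\infty|^\gamma$ does not suffer from this issue, since $m_\infty$ is defined on all of ${\cal O}_0$.
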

\begin{proof}
	We wish to apply Proposition \eqref{esempio1}, we consider the map $M:\omega\mapsto (\omega,m_0(\omega))$ and denote by $\cO_0$ the image of $\Omega_0$ through this map. Then we consider $\widetilde a_0(\f,x;\omega,\zeta) = a_0(\f,x;\omega)$  in this way  the dependence on $\zeta$ is trivial and we have
	\[
	\g^{-1}\|\widetilde a_0\|_{s_1}^{\g,\cO_0}= \g^{-1}\| a_0\|_{s_1}^{\g,\Omega_0} := \delta \le \eta_\star.
	\]
We thus apply Proposition \eqref{esempio1} to the vector field
\[
\widetilde X_0:=	\omega\cdot \frac{\partial}{\partial \varphi}  +(\zeta+ \widetilde a_0(\varphi,x;\omega,\zeta)) \cdot \frac{\partial}{\partial  x}.
\]
We produce a change of variables $x\mapsto x+ \widetilde\beta(\f,x;\omega,\zeta)$ which diagonalizes $\widetilde X_0$ in the set $\cO_{\infty}^{2\g}$. We now restrict our parameter set to $\zeta= m_0(\omega)$. By definition $\omega\in \Omega_0\Leftrightarrow (\omega,m_0(\omega))\in \cO_0$, so the restriction of $\cO_{\infty}^{2\g}$ to $\zeta= m_0(\omega)$ is $\Omega_{\infty}^{2\g}$.
It remains to prove the bound \eqref{prrr}. Setting $ \beta(\f,x,\omega)= \widetilde\beta(\f,x;\omega,m_0(\omega)) $ we have
\[
\|\beta \|^{sup,\Omega_{\infty}^{2 \gamma}}_{s}= \|\widetilde\beta \|^{sup,\cO_{\infty}^{2 \gamma}}_{s}\,,\quad \|\beta \|^{lip,\Omega_{\infty}^{2 \gamma}}_{s}\le  \|\widetilde\beta \|^{lip,\cO_{\infty}^{2 \gamma}}_{s}(1+|m_0|^{lip,\Omega_0}) 
\]
hence the result follows.
\end{proof}
\section{Reducibility}\label{PDE}
In this section we state some reducibility results of quasi-periodic transport type equations. They are obtained as a consequence of Theorem \ref{moserMic}. 
\begin{teor}\label{riducibilita trasporto}
Consider the transport equation
\begin{equation}\label{eq traporto dimensione alta}
\partial_t u +  \big( \zeta + a_0(\omega t, x; \omega, \zeta) \big) \cdot \partial_x u = 0 \,.
\end{equation}
Then if \eqref{a 0 Hs}, \eqref{piccib} are fullfilled, for $(\omega, \zeta) \in {\cal O}_\infty^{2 \gamma}$ (see \eqref{buoneacqueb}), under the change of variable $u = \Psi(\omega t)[v] = v(x + \beta(\omega t, x))$ defined in \eqref{perunpugnodidollarib}, the PDE \eqref{eq traporto dimensione alta} transforms into the equation with constant coefficients  
\begin{equation}\label{trasporto dim alta trasformato}
\partial_t v +  m_\infty(\omega , \zeta) \cdot \partial_x v = 0 \,.
\end{equation}
As a consequence, for any $s \geq 0$, $u_0 \in H^s(\T^d)$ the only solutions of the Cauchy problem 
$$
\begin{cases}
\partial_t u + \big( \zeta + a_0(\omega t, x; \omega, \zeta) \big) \cdot \partial_x u = 0  \\
 u(0, x) = u_0(x)
\end{cases}
$$
satisfies $\| u(t) \|_{H^s_x} \lesssim_s \| u_0\|_{H^s_x}$ for any $t \in \R$. Furthermore, if $a_0 = {\rm even}(\varphi, x)$, then $\beta = {\rm odd}(\varphi, x)$. 

\end{teor}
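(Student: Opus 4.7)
The plan is to lift the non-autonomous equation \eqref{eq traporto dimensione alta} to the extended phase space $\T^{\nu+d}$ via $\varphi=\omega t$, recognize its characteristic vector field as the $X_0$ in \eqref{mao1bbb}, and invoke Proposition \ref{esempio1} directly. Since the hypotheses \eqref{a 0 Hs} and \eqref{piccib} are in force by assumption, Proposition \ref{esempio1} supplies, for every $(\omega,\zeta)\in\cO_\infty^{2\gamma}$, a diffeomorphism of the form $x\mapsto x+\beta(\varphi,x)$ together with a constant $m_\infty(\omega,\zeta)$ satisfying the conjugation identity \eqref{tordo6b}. This is the entire non-trivial input; the rest is a routine translation of the identity to the PDE language.

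The first step is a direct chain-rule computation. Setting $\hat x := x + \beta(\omega t, x)$ and performing the substitution $u(t,x) = v(t,\hat x)$ one gets
\[
\partial_t u = \partial_t v + \nabla_{\hat x} v \cdot \bigl(\omega\cdot\partial_\varphi\beta\bigr),\qquad (\zeta+a_0)\cdot\partial_x u = \nabla_{\hat x} v \cdot \bigl[(\zeta+a_0) + (\zeta+a_0)\cdot\partial_x\beta\bigr],
\]
so that \eqref{eq traporto dimensione alta} becomes
\[
\partial_t v + \nabla_{\hat x} v \cdot \bigl[\,\omega\cdot\partial_\varphi\beta + (\zeta+a_0) + (\zeta+a_0)\cdot\partial_x\beta\,\bigr] = 0 .
\]
By \eqref{tordo6b} the bracket equals $m_\infty(\omega,\zeta)$ (a constant vector, hence invariant under composition with $\Psi$), and this is precisely \eqref{trasporto dim alta trasformato}.

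For the Cauchy problem I would then exploit the explicit solution of the constant-coefficient equation: $v(t,\hat x) = v_0(\hat x - m_\infty(\omega,\zeta)\,t)$ with $v_0 := \Psi(0)^{-1} u_0$, which is an isometry of $H^s(\T^d)$, so that $\|v(t)\|_{H^s_x} = \|v_0\|_{H^s_x}$ for every $t\in\R$. Transferring back to $u$ through $u(t,\cdot) = v(t,\cdot)\circ\Psi(\omega t,\cdot)$ reduces the claim to a uniform (in $t$) bound on the composition operators $w\mapsto w\circ\Psi(\omega t,\cdot)$ and its inverse on $H^s(\T^d)$. This is provided by the tame composition estimates collected in Appendix \ref{lemmitecnici} together with the bound \eqref{perunpugnodidollarib} on $\|\beta\|_{s+2\tau+4}^{\gamma,\cO_\infty^{2\gamma}}$; uniformity in $t$ is automatic because $\omega t$ ranges over the compact set $\T^\nu$ and the $H^s_x$-norm of $\beta(\varphi,\cdot)$ is controlled uniformly in $\varphi$.

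The reversibility assertion is inherited verbatim from Proposition \ref{esempio1}: $a_0$ even in $(\varphi,x)$ forces $\beta$ odd in $(\varphi,x)$. In summary, all the genuine analytical work has been absorbed into Proposition \ref{esempio1}; the only minor technical point here is the $t$-uniformity of the composition estimates, which is immediate from the $\varphi$-uniform Sobolev bounds on $\beta$ already encoded in \eqref{perunpugnodidollarib}.
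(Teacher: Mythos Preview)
Your proposal is correct and follows essentially the same route as the paper: both derive the conjugated equation by a direct chain-rule computation, invoke the identity \eqref{tordo6b} from Proposition \ref{esempio1} to reduce the coefficient to $m_\infty$, and then pass from the constant-coefficient flow back to $u$ via uniform composition bounds. Your observation that the bracket itself equals $m_\infty$ (being $\Psi$ of a constant) is a neat shortcut over the paper's formulation with $\Psi^{-1}$. The only point where the paper is slightly more explicit is the Sobolev estimate for \emph{all} $s\geq 0$: Lemma \ref{change} in the Appendix is stated for $s\geq 1$ (and in practice integer $s$), so the paper invokes Riesz--Thorin interpolation to pass from $H^k$, $k\in\N$, to general $H^s$; you should mention this step to cover the full range claimed in the statement.
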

\begin{proof}
Let $(\omega, \zeta) \in {\cal O}_\infty^{2 \gamma}$.{By direct computation, } under the change of coordinates $u = \Psi(\omega t)[v] = v(x + \beta(\omega t, x))$, the equation \eqref{eq traporto dimensione alta} transforms into the PDE 
$$
\partial_t v + \Psi(\omega t)^{- 1}\Big( \omega \cdot \partial_\varphi \beta + \zeta + a_0 + (a_0+\zeta) \cdot \partial_x \beta   \Big) \cdot \partial_x v = 0 \,.
$$
By applying Proposition \ref{esempio1}, one gets that 
$$
\Psi(\omega t)^{- 1}\Big( \omega \cdot \partial_\varphi \beta + \zeta + a_0 + (a_0+\zeta) \cdot \partial_x \beta   \Big) =  m_\infty(\zeta, \omega)
$$
implying that $v$ solves the equation \eqref{trasporto dim alta trasformato}. Such a PDE with constant coefficients can be integrated explicitly, implying that for any $s \geq 0$, $\| v(t) \|_{H^s_x} = \| v(0)\|_{H^s_x}$. Note that, since $\beta \in C^\infty$, by Lemma \ref{change}, $\Psi(\omega t)^{\pm 1}$ is a bounded linear operator $H^k(\T^d) \to H^k(\T^d)$ for any $k \in \N$ with $\sup_{\varphi \in \T^\nu} \| \Psi(\varphi)^{\pm 1}\|_{{\cal B}(H^k_x)} < + \infty$. By using the classical Riesz-Thorin interpolation Theorem for linear operators one gets that $\Psi(\varphi) \in {\cal B}(H^s_x)$ with $\sup_{\varphi \in \T^\nu} \| \Psi(\varphi)^{\pm 1}\|_{{\cal B}(H^s_x)} < + \infty$ for any $s \geq 0$.   Then given $u_0 \in H^s(\T^d)$, one gets that 
$$
\| u(t) \|_{H^s_x} = \|\Psi(\omega t)[v(t)] \|_{H^s_x} \lesssim_s \| v(t)\|_{H^s_x} \lesssim_s \| v(0)\|_{H^s_x} \lesssim_s \| \Psi(\omega t)^{- 1} [u_0]\|_{H^s_x} \lesssim_s \| u_0\|_{H^s_x}
$$
and this concludes the proof. 
\end{proof}
We remark that, as explained in Proposition \ref{esempio1}, the measure of $\cO_0\setminus\cO^{2\g}_\infty$  is of order $\g$, which guarantees  that the reducibility result holds for a  positive measure set of parameters $(\omega,\zeta)$.
On the other hand we are not able to give a bound on $\Omega_0\setminus\Omega^{2\g}_\infty$ unless we impose some further conditions.
We give an example which we believe is interesting for applications.
\begin{cor}\label{misura 1-d}
Let $d = 1$ and consider the one-dimensional transport equation 
	$$
	\partial_t u + \big( m_0{(\omega)} + a_0(\omega t, x; \omega) \big) \partial_x u = 0, \quad x \in \T\,. 
	$$
	Assume that $m_0(\omega)$ satisfies
	\[
	\inf_{\omega\in \Omega_0}|m_0| \ge c \,,\quad |m_0(\omega)|^{lip,\Omega_0}\le C  |m_0(\omega)|^{sup,\Omega_0}
	\]
	for  some appropriate $c,C$ which depend on the set $\Omega_0$.
	Assume finally that \eqref{pasta col pomodoro}, \eqref{piccic} hold.
	\\
Then the analog of Theorem \ref{riducibilita trasporto} hold  for any value of the frequency $\omega$ in the set $\Omega_\infty^{2 \gamma}$ defined in \eqref{buoneacquec}. 
	Moreover if  $\eta_\star,C$ are small enough, depending on the set $\Omega_0$, then the set $\Omega_\infty^{2 \gamma}$ satisfies the measure estimate $|\Omega_0 \setminus \Omega_\infty^{2 \gamma}| \lesssim \gamma$. 
\end{cor}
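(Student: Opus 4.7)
The reducibility statement follows from Corollary~\ref{cor m 0 omega a 0 omega}: under \eqref{pasta col pomodoro}, \eqref{piccic} it produces, for every $\omega \in \Omega_\infty^{2\gamma}$, a torus diffeomorphism $x \mapsto x + \beta(\omega t, x)$ that straightens \eqref{mao1b} into the constant vector field $\omega \cdot \partial_\varphi + m_\infty(\omega, m_0(\omega))\, \partial_x$. Repeating verbatim the Riesz--Thorin interpolation argument of Theorem~\ref{riducibilita trasporto} then yields the tame bound $\|u(t)\|_{H^s_x} \lesssim_s \|u_0\|_{H^s_x}$ for all $t \in \R$. The substantive content of the corollary is therefore the measure estimate $|\Omega_0 \setminus \Omega_\infty^{2\gamma}| \lesssim \gamma$, which is what I would attack next.

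Setting $\psi_{\ell, j}(\omega) := \omega \cdot \ell + m_\infty(\omega, m_0(\omega))\, j$ and
$$R_{\ell, j} := \bigl\{ \omega \in \Omega_0 : |\psi_{\ell, j}(\omega)| \leq 2\gamma / \langle \ell, j\rangle^\tau \bigr\},$$
we have $\Omega_0 \setminus \Omega_\infty^{2\gamma} = \bigcup_{(\ell, j) \neq 0} R_{\ell, j}$. Two ingredients drive the proof. First, \eqref{tordo4b} gives $\sup_\omega |m_\infty(\omega, m_0(\omega)) - m_0(\omega)| \leq \gamma \delta$, which combined with $\inf|m_0| \geq c$ yields $\inf_\omega |m_\infty(\omega, m_0(\omega))| \geq c/2$ as soon as $\gamma \delta \leq c/2$. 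Second, a direct splitting of the Lipschitz seminorm via \eqref{tordo4b} and the hypothesis $|m_0|^{lip} \leq C|m_0|^{sup}$ produces
$$L_m := |m_\infty(\cdot, m_0(\cdot))|^{lip, \Omega_0} \leq \delta + (1+\delta)\, C |m_0|^{sup},$$
which can be made arbitrarily small by shrinking $\eta_\star$ and $C$, at the cost of a dependence on $c$, $|m_0|^{sup}$ and $M_\Omega := \sup_{\omega \in \Omega_0}|\omega|$ (all finite and encoded by $\Omega_0$).

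Fix $\kappa_1 \in (0, c/(4 M_\Omega))$ and impose the smallness of $\eta_\star$, $C$ so that $L_m \leq \kappa_1/2$; we may also assume $\gamma < c/8$, as the regime $\gamma \geq c/8$ is trivially covered by $|\Omega_0 \setminus \Omega_\infty^{2\gamma}| \leq |\Omega_0| \lesssim_{\Omega_0} \gamma$. Four regimes arise. For $j = 0$, $\ell \neq 0$ the classical slab estimate for the linear function $\omega \mapsto \omega \cdot \ell$ gives $|R_{\ell, 0}| \lesssim \gamma / |\ell|^{\tau + 1}$. For $\ell = 0$, $j \neq 0$ the lower bound $|\psi_{0, j}| \geq c|j|/2 > 2\gamma$ forces $R_{0, j} = \emptyset$. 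When $\ell \neq 0$, $j \neq 0$ with $|\ell| < \kappa_1 |j|$, the inequality
$$|\psi_{\ell, j}(\omega)| \geq \tfrac{c}{2}|j| - M_\Omega \kappa_1 |j| \geq \tfrac{c}{4}|j| > 2\gamma$$
again yields $R_{\ell, j} = \emptyset$. Finally, when $\ell \neq 0$, $j \neq 0$ with $|\ell| \geq \kappa_1 |j|$, transversality along $\hat\ell := \ell/|\ell|$ gives, for every $\omega \in \Omega_0$ and every admissible $t$,
$$|\psi_{\ell, j}(\omega + t\hat\ell) - \psi_{\ell, j}(\omega)| \geq |t|(|\ell| - L_m |j|) \geq \tfrac{|t|\, |\ell|}{2}.$$
A Fubini argument along slices parallel to $\hat\ell$ then yields $|R_{\ell, j}| \lesssim \gamma/(|\ell|\, \langle \ell, j\rangle^\tau) \lesssim \gamma / \langle \ell, j\rangle^{\tau + 1}$, where in the last step I used $|\ell| \gtrsim \langle \ell, j\rangle$ in this regime.

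Summing over $(\ell, j) \in (\Z^\nu \times \Z) \setminus \{0\}$ gives
$$|\Omega_0 \setminus \Omega_\infty^{2\gamma}| \lesssim \gamma \sum_{(\ell, j) \neq 0} \frac{1}{\langle \ell, j\rangle^{\tau + 1}} \lesssim \gamma,$$
since $\tau + 1 = \nu + 4 > \nu + 1 = N + d$. The main obstacle is the transversality step in the last regime: it requires $L_m$ to lie strictly below the threshold $\kappa_1$ that was itself selected by the geometric exclusion in the third regime, and this in turn forces the joint smallness of $\eta_\star$ (controlling $\delta$) and $C$ (controlling $|m_0|^{lip}/|m_0|^{sup}$) demanded in the statement. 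All the remaining cases are either classical Diophantine slab estimates or trivial exclusions based on the uniform lower bound $|m_\infty(\omega, m_0(\omega))| \geq c/2$.
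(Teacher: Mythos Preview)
Your proof is correct and follows essentially the same approach as the paper: both arguments show that resonant sets with $|j|$ large relative to $|\ell|$ are empty via the lower bound on $|m_\infty|$, then use transversality along $\ell/|\ell|$ (requiring the Lipschitz seminorm of $\omega\mapsto m_\infty(\omega,m_0(\omega))$ to be small) to bound the remaining sets, and finally sum. Your explicit four-case split is slightly more organized than the paper's presentation, but the ideas and the smallness conditions on $\eta_\star$ and $C$ coincide.
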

Note that this includes the case $m_0\neq 0$ and constant in $\omega$. 
\begin{proof}
The fact that the analog of Theorem \ref{riducibilita trasporto} holds is just a repetition of the proof of that statement. The only non trivial thing is to show the measure estimate. 	
We first note that
\[
\inf_{\omega\in \Omega_0}|m_\infty| \ge c -\eta_\star \,,\quad |m_\infty|^{lip,\Omega_0}\le  |m_0|^{lip,\Omega_0}+\eta_\star \le C |m_0|^{sup,\Omega_0}+\eta_\star \le C|m_\infty|^{sup,\Omega_0}+ (1+C)\eta_\star
\]	 Fix $\ell,j$ and compute the measure of a  resonant set
\[
\mathcal R_{\ell j}:=\Big\{\omega\in \Omega_0\,\,:\,\,\lvert \omega\cdot\ell-m_{\infty} j \rvert\le \frac{2\gamma}{\langle\ell \rangle^{\tau}}\Big\}.
\]
We claim that if $\mathcal R_{\ell,j}\neq \emptyset$ then $\lvert j \rvert\le \mathtt k \lvert \ell \rvert$, where \[
\tk:= \sup_{{\omega}\in\Omega_0}\frac{\lvert \omega \rvert+\gamma}{\lvert m_{\infty} \rvert}\le  (c-\eta_\star)^{-1} (\sup_{{\omega}\in\Omega_0}\lvert \omega \rvert+1).
\] 
Note that,  since $\Omega_0$ is compact, $\tk$ is finite.
To prove our claim we note that
\[
|m_{\infty} j |\le |\omega\cdot \ell|+ \lvert \omega\cdot \ell-m_{\infty} j \rvert \le |\omega||\ell| + \frac{\gamma}{\langle \ell \rangle^{\tau}}\le (|\omega|+\g)|\ell|.
\]

In particular $\mathcal R_{0 j}=\emptyset$ for any $j\in\mathbb{Z}\setminus\{0\}$. 
We claim  that $\lvert \mathcal R_{\ell,j} \rvert\le {C} \gamma/\langle \ell \rangle^{\tau+1}$, for some ${C}>0$, for any $(\ell, j)\in\mathbb{Z}^{\nu}\times \mathbb{Z}\setminus\{(0, 0)\}$. We write
$$
\omega = \frac{\ell}{|\ell|} s + v\,, \quad v \cdot \ell = 0\,,
$$
so that setting
$$
\phi(s) := \omega\cdot \ell-m_{\infty}(\omega) j=  |\ell| s + m_\infty\Big( \frac{\ell}{|\ell|} s + v \Big) j \,.
$$
we have (recall $\lvert j \rvert\le \tk \lvert \ell \rvert$)
\[
\phi(s_1)-\phi(s_2) \geq \lvert \ell \rvert (1-\tk \lvert m_{\infty} \rvert^{lip,\Omega_0})\geq \frac{\lvert \ell \rvert}{2}\,,
\]
provided that 
\[
\tk \lvert m_{\infty} \rvert^{lip,\Omega_0}=\sup_{{\omega}\in\Omega_0}(\lvert \omega\rvert +\gamma)\frac{\lvert m_{\infty} \rvert^{lip,\Omega_0}}{\lvert m_{\infty} \rvert}\le \sup_{{\omega}\in\Omega_0} (\lvert \omega\rvert +\gamma) \frac{C \lvert m_{\infty} \rvert^{sup,\Omega_0}+ (1+C)\eta_\star}{\lvert m_{\infty} \rvert^{sup,\Omega_0}}\le \frac{1}{2}.
\]
This last inequality holds true provided that $c-\eta_\star>0$ and
\[
C (\sup_{{\omega}\in\Omega_0}\lvert \omega\rvert +\gamma)<1/4\,,\quad (\sup_{{\omega}\in\Omega_0}\lvert \omega\rvert +\gamma)\frac{1+C}{c-\eta_\star}\eta_\star<\frac14.
\]
Note that the first equation gives an upper bound for $C$, while the second gives an upper bound on $\eta_\star$. We assume that the desired smallness conditions hold and hence we have our claim.
\[
\lvert \Omega_0\setminus \Omega^{2\gamma}_{\infty} \rvert\le \sum_{\ell\in\mathbb{Z}^{\nu}\setminus\{0\}, \lvert j \rvert\le \tk \lvert \ell \rvert} \lvert \mathcal R_{\ell,j}\rvert \le \tk \g \sum_{ \ell \in  \Z^\nu\setminus\{0\}} \frac{1}{|\ell|^\tau}  \lesssim \g.
\]
since $\tau> \nu+1$.
\end{proof}

We also state the following corollary, concerning the solvability of a forced quasi-periodic transport equation. Such a corollary will be applied in \cite{Montsublin}.

\begin{cor}{(Forced case)}\label{cor2}
Assume the hypotheses of the corollary \ref{cor m 0 omega a 0 omega} and let $f:=f(\varphi, x)$ be some $C^{\infty}(\T^{\nu + d})$ function.\\
For every $\omega\in \Omega_{\infty}^{2 \gamma}$ (see \eqref{buoneacquec}), there exists a $C^{\infty}$ function $b(\varphi, x; \omega)$ and a constant $\mathtt{c} = \mathtt{c}(\omega)$ (depending in a Lipschitz way on the parameter $\omega$) such that
\begin{equation}\label{forzata}
\omega\cdot \partial_{\varphi} b(\varphi, x)+(m_0+a_0(\varphi, x)) \partial_x b(\varphi, x)+f(\varphi, x)=\mathtt{c}, \quad (\varphi, x)\in \T^{\nu+1}.
\end{equation}
Furthermore, there exists a constant $\sigma = \sigma (\tau, \nu) > 0$ such that the following estimates hold: 
\begin{equation}\label{stime per applicazione Rick}
\begin{aligned}
& \| b \|_s^{\gamma, \Omega_\infty^{2 \gamma}} \lesssim_s \gamma^{- 1}\Big( \| f \|_{s + \sigma} + \| a_0\|_{s + \sigma}^{\gamma, \Omega_0} \| f \|_{s_0 + \sigma} \Big)  \,, \quad \forall s \geq s_0 \,, \\
& |\mathtt c|^{\gamma, \Omega_0 } \lesssim  \| f \|^{\gamma, \Omega_0}_{s_0}\,. 
\end{aligned}
\end{equation}
\end{cor}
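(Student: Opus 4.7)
The plan is to conjugate equation \eqref{forzata} to a constant-coefficient transport equation via the straightening diffeomorphism of Corollary \ref{cor m 0 omega a 0 omega}, solve the latter directly in Fourier, and then transfer the estimates back. Fix $\omega\in\Omega_\infty^{2\gamma}$ and let $\Psi:(\varphi,x)\mapsto(\varphi,x+\beta(\varphi,x;\omega))$ be the torus diffeomorphism given by Corollary \ref{cor m 0 omega a 0 omega}, with action on functions $(\Psi u)(\varphi,x):=u(\varphi,x+\beta(\varphi,x))$. Writing $b=\Psi\widetilde b$ and using the pointwise identity
$$
\omega\cdot\partial_\varphi\beta+(m_0(\omega)+a_0(\varphi,x))(1+\partial_x\beta)=m_\infty(\omega,m_0(\omega)),
$$
which is \eqref{tordo6b} evaluated on the constant $m_\infty$ and restricted to $\zeta=m_0(\omega)$, the chain rule gives
$$
\omega\cdot\partial_\varphi b+(m_0+a_0)\partial_x b=\Psi\bigl(\omega\cdot\partial_\varphi\widetilde b+m_\infty\cdot\partial_x\widetilde b\bigr).
$$
Applying $\Psi^{-1}$ to \eqref{forzata} and using that $\mathtt c$ is a constant, the problem reduces to finding $\widetilde b$ and $\mathtt c$ satisfying
$$
\omega\cdot\partial_\varphi\widetilde b+m_\infty\cdot\partial_x\widetilde b=\mathtt c-\widetilde f,\qquad \widetilde f:=\Psi^{-1}f.
$$

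The second step is an explicit Fourier computation. Setting $\mathtt c:=\langle\widetilde f\rangle_{\T^{\nu+d}}$ kills the mean of the right-hand side, and for $(\ell,j)\neq(0,0)$ one defines
$$
\widetilde b_{\ell,j}:=-\frac{\widetilde f_{\ell,j}}{\mathrm i\bigl(\omega\cdot\ell+m_\infty(\omega,m_0(\omega))\cdot j\bigr)}.
$$
The Diophantine lower bound in \eqref{buoneacquec}, valid for every $\omega\in\Omega_\infty^{2\gamma}$, then yields by the standard small-divisor argument in $\gamma$-weighted norms the tame estimate $\|\widetilde b\|_s^{\gamma,\Omega_\infty^{2\gamma}}\lesssim_s \gamma^{-1}\|\widetilde f\|_{s+2\tau+1}^{\gamma,\Omega_\infty^{2\gamma}}$, where the extra $\tau+1$ derivatives cover the Lipschitz part of the small divisors.

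The third step is to pull everything back. Lemma \ref{change} in Appendix \ref{lemmitecnici}, combined with the bound \eqref{prrr} on $\|\beta\|_s^\gamma$, gives the tame composition estimate
$$
\|\Psi^{\pm 1}g\|_s^\gamma\lesssim_s \|g\|_s^\gamma+\|a_0\|_{s+2\tau+4}^\gamma\|g\|_{s_0}^\gamma,
$$
which I apply first to control $\|\widetilde f\|_s$ in terms of $\|f\|_s$, and then to pass from $\widetilde b$ back to $b=\Psi\widetilde b$; coupling with the Fourier estimate above produces \eqref{stime per applicazione Rick} for a suitable $\sigma=\sigma(\tau,\nu)$ absorbing both $2\tau+4$ and $2\tau+1$. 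The sup-part of the bound on $\mathtt c$ is immediate from $|\mathtt c|\leq\|\widetilde f\|_{L^2}\lesssim\|\widetilde f\|_{s_0}\lesssim\|f\|_{s_0}$, and its Lipschitz dependence on $\omega$ is controlled by applying Corollary \ref{cor1} (with parameter $\lambda=\omega$) to estimate $\Delta_{12}\beta$ in a low norm.

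The only non-routine point I expect is the bookkeeping of the losses of regularity: tracking how the $2\tau+4$ loss coming from \eqref{prrr}, the $\tau$ loss from the small divisors, and the further derivatives consumed by the Lipschitz part of the small-divisor argument combine into a single $\sigma$ \emph{independent of $s$}. All the analytical ingredients are tame estimates already established in the paper, so no new difficulty should arise.
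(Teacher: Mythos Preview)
Your proposal is correct and follows essentially the same approach as the paper: conjugate the transport operator to constant coefficients via the straightening diffeomorphism $\Psi$ of Corollary \ref{cor m 0 omega a 0 omega}, solve $\mathcal L_\infty\widetilde b=\mathtt c-\widetilde f$ in Fourier with $\mathtt c$ chosen as the average of the conjugated datum, and pull back using the tame composition estimates of Lemma \ref{change} together with \eqref{prrr}. One small inaccuracy: the Lipschitz-in-$\omega$ bound on $\mathtt c$ does not require Corollary \ref{cor1} (which concerns an auxiliary Banach parameter $\lambda$), but follows directly from the $\gamma$-weighted estimate \eqref{larana} applied to $\widetilde f$ and the bound \eqref{prrr} on $\|\beta\|_s^{\gamma}$.
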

\begin{proof}
	The equation \eqref{forzata} can be written as
	\begin{equation}\label{grotta2}
	\mathcal{L} b+f=\mathtt{c}
	\end{equation}
	where
	\begin{equation}
	\mathcal{L}:=\omega\cdot\partial_{\varphi}- (m_0+a_0(\varphi, x))\partial_x, \quad 
	\end{equation}
	By Corollary \ref{cor m 0 omega a 0 omega}, in particular by \eqref{zanzur5}, we have that $\mathcal{L}=\Psi^{-1}\mathcal{L}_{\infty} \Psi$ , where 
	\begin{equation}
	\Psi h(\varphi, x):=h(\varphi, x+\beta (\varphi, x)), \quad \mathcal{L}_{\infty}:=\omega\cdot \partial_{\varphi}-m_{\infty} \partial_x.
	\end{equation}
	Then
	\begin{equation}
	\mathcal{L}_{\infty} \Psi b=\Psi \big(\mathtt{c}-f \big).
	\end{equation}
	Using that $\Psi(1)=1$ we get
	\begin{equation}\label{grotta}
	\mathcal{L}_{\infty} \Psi  b=\mathtt{c}-\Psi \big(f \big).
	\end{equation}
	and we choose $\mathtt{c}$ such that
	\begin{equation}\label{def mathtt c}
	\mathtt{c}=\langle \Psi \big(f \big)\rangle_{\T^{\nu+1}}
	\end{equation}
	so that the r. h. s. of the equation \eqref{grotta} has zero average. By the fact that $\Psi$ is bounded from $H^s$ to itself for any $s\geq s_0$ and $f\in C^{\infty}$ then $g:=\mathtt{c}-\Psi \big(f \big)\in C^{\infty}$.\\
	Since $g\in C^{\infty}$ and has zero average then the equation $\mathcal{L}_{\infty}[h]=g$, for any $\omega\in \calO^{2\gamma}_{\infty}$, has a $C^{\infty}$-solution which is given by
	\begin{equation}
	h(\varphi, x):=\mathcal{L}_{\infty}^{-1}[g](\varphi, x)=\sum_{(\ell, j)\neq (0,0)} \frac{g_{\ell j}}{\mathrm{i}(\omega\cdot \ell-m_{\infty} j)}\,e^{\mathrm{i}(\ell\cdot \varphi+j x)}.
	\end{equation}
	Furthermore, using the estimate on $m_\infty$ given in \eqref{prrr}, the following standard estimate holds: 
	\begin{equation}\label{stima cal L infty per Rick}
	\| h \|_{s}^{\gamma, \Omega_\infty^{2 \gamma}} \lesssim_s \gamma^{- 1} \| g \|_{s + 2 \tau + 1}^{\gamma, \Omega_\infty^{2 \gamma}}\,, \quad \forall s \geq 0\,. 
	\end{equation}
	Therefore the function
	\begin{equation}\label{def b}
	b:=\Psi^{-1}\mathcal{L}_{\infty}^{-1}[\mathtt{c}-\Psi \big(f \big)]
	\end{equation}
	is a $C^{\infty}$-solution of the equation \eqref{grotta2}.
	
	\noindent
	Finally, the estimates \eqref{stime per applicazione Rick} follow by \eqref{def mathtt c}, \eqref{def b} and by applying the estimates \eqref{A19}, \eqref{larana}, the smallness condition \eqref{piccic} and the estimates \eqref{prrr}, \eqref{stima cal L infty per Rick}. 
\end{proof}
\section{An iterative KAM scheme}\label{quattro}
We prove Theorem \ref{moserMic} and the first item of the Corollary \ref{cor1}.
This is done by applying recursively a KAM step which we  now describe.

\subsection{KAM step}

\noindent
Consider for $\xi \in \cO\subseteq \cO_0$ a Lipschitz family of vector fields  on $\T^{N}$
\begin{equation}\label{mao2}
\begin{aligned}
& 	X:=	(\alpha   + f(\theta;\xi)) \cdot \frac{\partial}{\partial \theta} \\
&\quad |\alpha|^{lip, \cO_0}\le M<2\,,\quad f(\cdot; \xi)\in H^s(\T^{N},\R^d )\quad  \forall s\geq s_0.
\end{aligned}
\end{equation}
Given  $K\gg 1$ and $\g>0$  assume that for some domain $\cO\subseteq \cO_0$ we have (recall \eqref{tripoli})
\begin{equation}\label{piccolezza}
\g^{-1}K^{2\tau+2s_0+1} \|f\|^{\g,\mathcal O}_{s_0} \le \delta:=\delta(s_0) 
\end{equation} 
for some $\delta$ small enough.
Let 
\begin{equation}\label{mao3}
\begin{aligned}
\cO_+ & \equiv \cC_{K,\cO} := \{\xi\in \cO: \; |\alpha\cdot k|>\frac{\g }{ \langle k \rangle^{\tau}}\,,\;\forall k \in \Z^N \setminus \{0\}, \;\; |k|\leq K \}\,,
\end{aligned}
\end{equation}
and for all $\xi \in \cO_{+}$ set $g(\theta ; \xi )$ to be
\begin{equation}\label{perqualchedollaroinpiu}
g(\theta ; \xi ):= \sum_{|k|\leq K} g_{k} e^{\im k\cdot \theta}\,,
\end{equation}
where
\begin{equation}\label{coefficiente fourier g}
g_{k}=
 -\dfrac{f_{k}}{\im \alpha\cdot k}\,, 
 \quad \forall k \in \Z^N \,, \quad 0 < |k| \leq K \,. 
\end{equation}

\begin{lem}\label{lemma KAM step}
The function $g$ defined in \eqref{perqualchedollaroinpiu} satisfies 
\begin{equation}\label{money}
\lVert g \rVert_{s}^{{\g },\mathcal O_{+}} \lesssim {\gamma}^{-1}  \lVert\Pi_K f \rVert^{\g ,\mathcal O}_{s+2\tau+1},\quad \forall \; s\geq s_0.
\end{equation}
The map
\begin{equation}\label{mappaPhi}
\Phi: \theta \mapsto \theta+g(\theta)
\end{equation}
is a diffeomorphism of $\T^{N}$. We have that the pushforward of the vector field $X$ in \eqref{mao2} under the map $\Phi$ in \eqref{mappaPhi} has the form
\begin{equation}\label{mao4}
\Phi_* X:=  	(\alpha_+   + f_+(\theta;\xi)) \cdot \frac{\partial}{\partial \theta}
\end{equation}
where   $\alpha_+ \in \R^N$  is defined and Lipschitz  for $\xi \in\cO_0$, 
 the function $f_+$ is defined and Lipschitz for all $\xi \in \mathcal O_+$ (see \eqref{mao3})
and the following bounds hold:
\begin{equation}\label{nomentana}
|\alpha-\alpha_+|^{\gamma ,\mathcal O_0} \lesssim \lVert f \rVert_{s_0}^{{\g },\mathcal O},
\end{equation}
	\begin{equation}\label{mao0}
	\begin{aligned}
&	\lVert f_+ \rVert_{s_0}^{{\g },\mathcal O_+} \lesssim K^{s_0-s_1}\| f\|_{s_1}^{\g ,\cO} +C_{s_0} \g^{-1}K^{2\tau+2} (\|f\|^{\g ,\cO}_{s_0})^2, \\
& 	\lVert f_+ \rVert_{s}^{{\g },\mathcal O_+} \le \lVert f \rVert^{\g ,\mathcal O}_{s}+ C_s {\gamma}^{-1} K^{2\tau+2 s_0 + 1}\lVert f \rVert^{\g ,\mathcal O}_{s_0} \lVert f \rVert^{\g ,\mathcal O}_{s}. 
	\end{aligned}	
	\end{equation}
Moreover if $f$ is a reversible vector field, then $\theta \mapsto \theta + g(\theta)$ is a reversibility preserving map, implying that $f_+$ is a reversible vector field.	

\noindent	
Let $\lambda_1, \lambda_2 \in B_E$, $\xi  \in \mathcal O_+ (\lambda_1) \cap \calO_+ (\lambda_2)$, $s_1, \mathtt b > 0$ satisfying 
\begin{equation}\label{limitazione indici s2}
s_0 <   \mathtt b + 2 \tau + 3 s_0  + 2 < s_1 \,.
\end{equation}
There exists $\delta':= \delta'(s_1)$ such that if 
\begin{equation}\label{ansatz KAM step variazioni}
 \gamma^{- 1}\big(\| f (\lambda_1)\|_{s_1} + \| f(\lambda_2)\|_{s_1} \big) \leq \delta'
\end{equation}
then  for $\omega\in \cO_+(\lambda_1)\cap \calO_+(\lambda_2)$
\begin{equation}\label{monkey}
\begin{aligned}
&\lVert \Delta_{12} g   \rVert_{s }\lesssim_{s} \gamma^{-1} \Big(\lVert  \Pi_K\Delta_{12} f  \rVert_{s +\tau}+\gamma^{-1}\lvert \Delta_{12} \alpha  \rvert \lVert \Pi_K f \rVert_{s +2\tau+1}\Big), \quad \forall s \geq 0\\
\end{aligned}
\end{equation}
\begin{equation}\label{nomentana2}
\begin{aligned}
& |\Delta_{12} (\al_+ - \al)| \leq \|  \Delta_{12} f \|_{s_0}\,, \\
\end{aligned}
\end{equation}
\begin{equation}\label{tordo}
\begin{aligned}
\lVert \Delta_{12} f_+   \rVert_{s_0 - 1} &\lesssim_{s_0, \tb}  K^{- 1- \mathtt b}  \| \Delta_{12} f\|_{s_0 + \mathtt b} + K^{ \tau + s_0} \gamma^{- 1}\| \Delta_{12} f \|_{s_0 - 1} {\cal M}_{s_0}(f, \lambda_1, \lambda_2) \\
& \qquad + K^{2 \tau + s_0}\gamma^{- 2} |\Delta_{12} \alpha | {\cal M}_{s_0}(f, \lambda_1, \lambda_2)^2,\\
\|\Delta_{12} f_+ \|_{s_0 +\mathtt b} & \lesssim_{s_0, \mathtt b} K^{2 \tau + s_0 }\Big(  \| \Delta_{12} f\|_{s_0 + \mathtt b} + |\Delta_{12} \alpha| \Big) \,. 
\end{aligned}
\end{equation}  
\end{lem}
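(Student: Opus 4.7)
The argument proceeds in four stages. \emph{First}, we estimate the function $g$ defined by \eqref{perqualchedollaroinpiu}--\eqref{coefficiente fourier g}. On $\cO_+$ (see \eqref{mao3}) we have $|\alpha\cdot k|>\gamma/\langle k\rangle^\tau$ for $0<|k|\leq K$, hence $|g_k|\leq \gamma^{-1}\langle k\rangle^\tau |f_k|$, which gives at once $\|g\|_s^{\sup}\lesssim \gamma^{-1}\|\Pi_K f\|_{s+\tau}^{\sup}$. For the Lipschitz seminorm we write
\[
g_k(\xi_1)-g_k(\xi_2) = -\frac{f_k(\xi_1)-f_k(\xi_2)}{i\,\alpha(\xi_1)\cdot k} + f_k(\xi_2)\,\frac{(\alpha(\xi_1)-\alpha(\xi_2))\cdot k}{(\alpha(\xi_1)\cdot k)(\alpha(\xi_2)\cdot k)},
\]
so that the small divisor appears \emph{squared} in the second term, producing the extra factor $\gamma^{-1}\langle k\rangle^{\tau+1}$ and hence the bound \eqref{money} (using $|\alpha|^{lip}\leq M$). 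The same decomposition, with $\Delta_{12}$ now reading as a difference in the external parameter $\lambda$ and $\Delta_{12}\alpha$ retained as an explicit source term, yields \eqref{monkey}.

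\emph{Second}, we construct $\Phi$ and compute the pushforward. From \eqref{piccolezza} combined with \eqref{money} we obtain $\|g\|_{s_0+1}^\gamma \lesssim \gamma^{-1}K^{2\tau+2}\|f\|_{s_0}^\gamma\ll 1$, so by the Appendix \ref{lemmitecnici} the map $\Phi=\mathrm{Id}+g$ is a $C^\infty$ diffeomorphism of $\T^N$ whose inverse $\Phi^{-1}=\mathrm{Id}+\widetilde g$ satisfies tame composition estimates. By construction $g$ solves the homological equation $\alpha\cdot\partial_\theta g + \Pi_K f = \langle f\rangle$; substituting this into the general pushforward formula recalled just after \eqref{grotta4} gives
\[
(\Phi_* X)(\widehat\theta) = \bigl(\alpha + \langle f\rangle + (\Pi_K^\bot f + f\cdot\partial_\theta g)\circ \Phi^{-1}(\widehat\theta)\bigr)\cdot \frac{\partial}{\partial\widehat\theta},
\]
so we set $\alpha_+:=\alpha+\langle f\rangle$ and $f_+:=(\Pi_K^\bot f + f\cdot\partial_\theta g)\circ\Phi^{-1}$. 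The bound $|\alpha_+-\alpha|^\gamma = |\langle f\rangle|^\gamma\leq \|f\|_{s_0}^\gamma$ yields \eqref{nomentana}, extended from $\cO$ to $\cO_0$ by Kirszbraun; the identity $\Delta_{12}(\alpha_+-\alpha)=\Delta_{12}\langle f\rangle$ gives \eqref{nomentana2}. Reversibility follows since $f$ even implies $f_{-k}=f_k$, hence $g_{-k}=-g_k$, so $g$ is odd, $\Phi$ is reversibility preserving, and $f_+$ is reversible by direct inspection of its formula.

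\emph{Third}, we estimate $f_+$ in Sobolev norms. The low-norm bound (first line of \eqref{mao0}) splits as $\|\Pi_K^\bot f\|_{s_0}\leq K^{s_0-s_1}\|f\|_{s_1}$ for the linear piece and, by the tame product \eqref{TameProduct}, $\|f\cdot\partial_\theta g\|_{s_0}^\gamma\lesssim \|f\|_{s_0}^\gamma\|g\|_{s_0+1}^\gamma\lesssim \gamma^{-1}K^{2\tau+2}(\|f\|_{s_0}^\gamma)^2$ for the quadratic piece; composition with $\Phi^{-1}$ is harmless since the displacement is small in low norm. The high-norm bound (second line of \eqref{mao0}) follows from $\|\Pi_K^\bot f\|_s\leq \|f\|_s$ (leading term) together with tame product and composition estimates producing the remainder $C_s\gamma^{-1}K^{2\tau+2s_0+1}\|f\|_{s_0}^\gamma\|f\|_s^\gamma$.

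\emph{Fourth}, we prove the variation estimates \eqref{tordo}. Writing $h := \Pi_K^\bot f + f\cdot\partial_\theta g$, we use the telescopic decomposition
\[
\Delta_{12}(h\circ\Phi^{-1}) = (\Delta_{12}h)\circ\Phi^{-1}(\lambda_1) + h(\lambda_2)\circ\Phi^{-1}(\lambda_1) - h(\lambda_2)\circ\Phi^{-1}(\lambda_2),
\]
bounding the last difference by $\|\nabla h(\lambda_2)\|\cdot \|\Delta_{12}\widetilde g\|$ via the mean value theorem. The $\Pi_K^\bot$ contribution to $\Delta_{12}h$ is controlled by the smoothing estimate $\|\Pi_K^\bot \Delta_{12}f\|_{s_0-1}\leq K^{-1-\mathtt{b}}\|\Delta_{12}f\|_{s_0+\mathtt{b}}$, which produces the first term in \eqref{tordo}. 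The quadratic piece $\Delta_{12}(f\cdot\partial_\theta g)$ is handled by tame products combined with \eqref{monkey}, and the variation of $g$ injects the $\gamma^{-2}|\Delta_{12}\alpha|$ factor that is the main source of loss. The main obstacle is precisely the bookkeeping at this step: one must ensure that all $K$-powers, the $\gamma^{-2}|\Delta_{12}\alpha|$ term, and the composition-difference terms combine as stated while remaining within the Sobolev window \eqref{limitazione indici s2}. The higher-norm bound on $\Delta_{12}f_+$ is considerably cruder: one loses the factor $K^{2\tau+s_0}$ freely through the small-divisor estimates.
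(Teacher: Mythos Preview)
Your plan is correct and follows essentially the same route as the paper: the Fourier-side estimate of $g$ and its Lipschitz/$\Delta_{12}$ variation, the homological-equation identity yielding $\alpha_+=\alpha+\langle f\rangle$ (with Kirszbraun extension) and $f_+=(\Pi_K^\bot f+f\cdot\partial_\theta g)\circ\Phi^{-1}$, the smoothing bound for the low norm, the tame product/composition bounds for the high norm, and the same telescopic decomposition plus mean-value argument for $\Delta_{12}f_+$. The only cosmetic difference is that the paper writes the $\Delta_{12}$ decomposition as $(\Delta_{12}\Phi^{-1})[F(\lambda_1)]+\Phi^{-1}(\lambda_2)[\Delta_{12}F]$ rather than your $(\Delta_{12}h)\circ\Phi^{-1}(\lambda_1)+\bigl(h(\lambda_2)\circ\Phi^{-1}(\lambda_1)-h(\lambda_2)\circ\Phi^{-1}(\lambda_2)\bigr)$, but these are the same identity with the roles of $\lambda_1,\lambda_2$ swapped.
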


\begin{proof}
By definition of $\|\cdot \|_s$ (see \eqref{space1}), \eqref{perqualchedollaroinpiu} and \eqref{mao3} we have $\lVert g \rVert_{s} \lesssim \gamma^{-1} K^\tau \lVert a \rVert_{s}$ for all $\xi\in \cO_{+}$. By \eqref{perqualchedollaroinpiu} we have, for $\xi,\xi'\in \calO_{+}$,
\begin{equation}
\begin{aligned}
& |\Delta_{\xi, \xi'} g_{k}|\le  \frac{|\Delta_{\xi, \xi'} f_{k}|}{|\alpha(\xi)\cdot k|}+ \frac{|f_k(\xi')||\Delta_{\xi, \xi'} \alpha | |k|}{|\alpha(\xi)\cdot k||\alpha(\xi')\cdot k|}
\end{aligned}
\end{equation}
hence by \eqref{mao3} we get \eqref{money} and, by using smoothing properties of the projector $\Pi_K$ (see Lemma \ref{lemma interpolazione smoothing}, item $(iv)$) we have
\begin{equation}\label{mao5}
\begin{aligned}
 &  \lVert g \rVert_{s}^{\g,\cO_+} \lesssim \g^{-1} K^{2\tau+1} \lVert  f \rVert_{s}^{\g,\cO}.
 \end{aligned}
\end{equation}

We claim that $g$ satisfies the hypotheses of Lemma \ref{change}, hence $\Phi$ is a diffeomorphism. Indeed, since $s_0\geq [N/2]+3> \gots_0+1$, by \eqref{mao5} and \eqref{piccolezza} we have 
\[
|g|_{1,\infty}^{\gamma, \calO_{+}}\lesssim_{s_0} \|g\|^{\g,\cO_{+}}_{s_0}\lesssim_{s_0} \gamma^{-1} K^{2\tau+1}\|f\|^{\g,\cO}_{s_0}\le \frac12 
\]
provided that $\delta$ in \eqref{piccolezza} is sufficiently small.
By applying Lemma \ref{change}, and by the Sobolev embedding, one gets that the inverse diffeomorphism $y \mapsto y + \widetilde g(\varphi, y)$ satisfies the estimate
\begin{equation}\label{stima widetilde alpha}
\begin{aligned}
\| \widetilde g \|_s^{\gamma, \calO_+} & \lesssim_s  \| g\|_{s +s_0 }^{\gamma, \calO_+}\,,\quad s\ge 0. 
\end{aligned}
\end{equation}
By definition of pushforward (we rename the new variables as $\theta$)
$$
	\Phi_* X:= (\Phi)^{-1}\big(\alpha + f + (\alpha+ f)\cdot \partial_\theta g \big) \cdot \frac{\partial}{\partial \theta}
$$
and by the definition of $g$ in \eqref{perqualchedollaroinpiu}
$$
\begin{aligned}
& \alpha + f + (\alpha+ f)\cdot \partial_\theta g\\
& =  \alpha + \langle f\rangle + \Pi_K^\perp  f +  f\cdot \partial_\theta g.
\end{aligned}
$$
Now we extend $\langle f \rangle$ from $\cO$ to the whole $\cO_0$ by Kirtzbraun theorem, preserving the Lipschitz norm.

\noindent
 We  denote the extension by $\langle f \rangle^{\rm Ext}$ and set
 \begin{equation}\label{aPiu}
 \begin{aligned}
 \alpha_+:= \alpha + \langle f \rangle^{\rm Ext}\,, \quad &\xi\in\cO_0,\\
f_+(\theta):= \Phi^{-1}\Big(\Pi_K^\perp  f +  f\cdot \partial_\theta g\Big), \quad &\xi\in\cO_+.
\end{aligned}
 \end{equation}
The bounds \eqref{nomentana} follow since 
\[
|\langle f \rangle^{\rm Ext} |^{\gamma ,\cO_0}\le |\langle f \rangle |^{\gamma ,\cO}\le \|f\|^{\gamma ,\cO}_{s_0}\,.
\]
 As for \eqref{mao0}  
we repeatedly use  Lemma \ref{change}, 
 indeed setting 
 \begin{equation}\label{effe}
 F:= \Pi_K^\perp  f +  f\cdot \partial_\theta g
 \end{equation}
we have by \eqref{larana}   {for $s\in\mathbb{N}$, $s\geq s_0$}
\begin{equation}\label{maramao}
\begin{aligned}
&\|f_+\|_{s}^{\g,\cO_+}\le \|F\|_{s}^{\g,\cO_+} + C_s\big(\|F\|_{s}^{\g,\cO_+}\|\widetilde g\|^{\g, \cO_+}_{s_0+1}+  \|\widetilde g\|^{\g,\cO_+}_{s+s_0}\|F\|^{\g,\cO_+}_{s_0}\big) \\
& \stackrel{\eqref{stima widetilde alpha}}{\leq} \|F\|_{s}^{\g,\cO_+} + C_s \big( \|F\|_{s}^{\g,\cO_+}\| g\|^{\g, \cO_+}_{ 2 s_0+1}+  \| g\|^{\g,\cO_+}_{s+2 s_0}\|F\|^{\g,\cO_+}_{s_0} \big)\\
&\|F\|_{s}^{\g,\cO_+}\le \|\Pi_K^\perp f(\varphi,x)\|_{s}^{\g,\cO} +C_s( \|f\|^{\g,\cO}_{s_0}\|g\|_{s+1 }^{\g,\cO_+}+ \|f\|_{s}^{\g,\cO}\| g\|_{s_0+1 }^{\g,\cO_+}).
\end{aligned}
\end{equation}
Then if $s=s_0$ by applying the smoothing estimates \eqref{smoothingest} in the second inequality in \eqref{maramao} we get
\begin{equation}\label{mao6}
\begin{aligned}
\|F\|_{s_0}^{\g,\cO_+} &\le K^{s_0-s_1}\| f\|_{s_1}^{\g,\cO} +C_{s_0}  K \|f\|_{s_0}^{\g,\cO}\|g\|_{s_0}^{\g, \calO_+} \\
&\stackrel{(\ref{mao5})}{\le}  K^{s_0-s_1}\|f\|_{s_1}^{\g,\cO}+ \g^{-1}C_{s_0}K^{2\tau+2} (\|f\|_{s_0}^{\g,\cO} )^2,\\[2mm]
\|f_+\|_{s_0}^{\g,\cO_+} &\le (K^{s_0-s_1}\| f\|_{s_1}^{\g,\cO} +C_{s_0} \g^{-1}K^{2\tau+2} (\|f\|^{\g,\cO}_{s_0})^2)(1+ C_{s_0}\g^{-1}K^{2\tau+ 2 s_0 + 1}\|f\|^{\g,\cO}_{s_0} ).
\end{aligned}
\end{equation}
If $s>s_0$ by \eqref{maramao} and \eqref{mao5} we just get
\begin{equation}\label{mao7}
\|f_+\|_{s}^{\g,\cO_+}\le  \|F\|_{s}^{\g,\cO_+}(1+ C_s\g^{-1}K^{2\tau+ s_0 + 2} \|f\|^{\g, \calO}_{s_0}) +C_s  \g^{-1}K^{2\tau+ 2 s_0 + 1} \|f\|^{\g,\cO}_{s}\|F\|^{\g,\cO_+}_{s_0}
\end{equation}
with 
\begin{equation}\label{mao8}
\|F\|_{s}^{\g,\cO_+} {\le} \|\Pi_K^{\perp} f\|_{s}^{\g,\cO} +2C_{s}\g^{-1}K^{2\tau+2}\|f\|_{s_0}^{\g,\cO}\| f\|_{s}^{\g,\cO}\,,
\end{equation}
by using  \eqref{piccolezza} 
we get  \eqref{mao0}.

\noindent
If the vector field $f$ is reversible, then $f = {\rm even}(\theta)$, therefore by the formulae \eqref{perqualchedollaroinpiu}, \eqref{coefficiente fourier g}, one has that $g = {\rm odd}(\theta)$, implying that the diffeomorphism $\theta \mapsto \theta + g(\theta)$ is reversibility preserving. It then follows that the function $F = {\rm even}(\theta)$, by \eqref{effe}. By \eqref{aPiu}, one has that $f_+ = \Phi^{- 1} F$, hence 
$$
f_+(- \theta) = F(- \theta + g(- \theta)) = F\Big(- \big(\theta + g( \theta) \big) \Big) = F(\theta + g(\theta)) = f_+(\theta)
$$
implying that $f_+ = {\rm even}(\theta)$. This proves that $f_+$ is a reversible vector field.

\medskip

\noindent
Now let $\mathtt b$ satisfy \eqref{limitazione indici s2} and $\lambda_1, \lambda_2 \subseteq B_E$ where $B_E \subseteq E$ is a ball in the Banach space $E$. We introduce the notation 
$$
{\cal M}_s (f, \lambda_1, \lambda_2) := {\rm max}\{ \| f(\lambda_1)\|_s\,,\, \| f(\lambda_2)\|_s \}\,, \quad \forall s \geq 0\,. 
$$
The bound in \eqref{monkey} follows since
\[
|\Delta_{12} g _{k}| \leq \frac{|\Delta_{12} f_k|}{|\alpha(\lambda_1)\cdot k|}+\frac{|f_{k}(\lambda_2)| |\Delta_{12} \alpha | |k|}{|\alpha(\lambda_1)\cdot k||\alpha(\lambda_2)\cdot k|} \le
\g^{-1}\langle k\rangle^\tau |\Delta_{12} f_k| +\g^{-2} \langle k\rangle^{2\tau+1}|f_{k}(\lambda_2)||\Delta_{12} \alpha |
\]
for all $\xi\in \calO_+(\lambda_1)\cap\calO_+(\lambda_2)$. Now under the same hypotheses
\begin{equation}\label{dollariNograzie}
\begin{aligned}
\Delta_{12} \alpha_+  &=\Delta_{12} \alpha  +\langle \Delta_{12} f   \rangle, \\
\Delta_{12} f_+  &= (\Delta_{12} \Phi^{- 1})\big[ \Pi_K^{\perp} f(\lambda_1) + f(\lambda_1) \cdot \partial_\theta g(\lambda_1)  \big]  \\
& + \Phi^{- 1}(\lambda_2) \big[ \Pi_K^\bot \Delta_{12} f + (\Delta_{12}f) \cdot \partial_\theta g(\lambda_1) + f(\lambda_2) \cdot \partial_\theta (\Delta_{12} g) \big]\,. 
\end{aligned}
\end{equation}

By using the mean value Theorem, by applying Lemma \ref{change} and using the estimate \eqref{stima widetilde alpha}, for any $s \in [s_0 - 1, s_0 + \mathtt b]$, one gets 
\begin{equation}\label{pappalardoMic}
\begin{aligned}
\| \Delta_{12} \Phi^{- 1}[u] \|_{s} & \lesssim_s \| u \|_{s + 1} \Big(1 + \| g (\lambda_1)\|_{s + 2 s_0} + \| g (\lambda_2)\|_{s + 2 s_0} \Big) \| \Delta_{12} g \|_{s+s_0} \\
& \stackrel{\eqref{money}}{\lesssim_s}  \|  u \|_{s + 1} \Big(1 + C_{s_0}  \gamma^{- 1} {\cal M}_{s + 2 s_0+\tau}(f, \lambda_1, \lambda_2) \Big) \| \Delta_{12} g \|_{s+s_0}\,  \\
& \stackrel{s_2 + \mathtt b + 2 s_0 \leq s_1}{\lesssim_s}  \|  u \|_{s + 1} \Big(1 + C_{s_0}  \gamma^{- 1}{\cal M}_{s_1}(f, \lambda_1, \lambda_2) \Big) \| \Delta_{12} g \|_{s+s_0}\\
& \stackrel{\eqref{ansatz KAM step variazioni}}{\lesssim_s}   \| u\|_{s + 1} \| \Delta_{12} g \|_{s+s_0}. 
\end{aligned}
\end{equation}
Using \eqref{ansatz KAM step variazioni}, applying the estimate \eqref{monkey} and Lemma \ref{lemma interpolazione smoothing}-$(iv)$, one gets 
{\begin{equation}\label{stima delta 12 phi - 1 s2}
\| \Delta_{12} \Phi^{- 1}[u] \|_{s}  \lesssim_s \gamma^{-1} \| u \|_{s + 1}  \Big(\lVert  \Pi_K\Delta_{12} f  \rVert_{s+s_0 +\tau}+\gamma^{-1}\lvert \Delta_{12} \alpha  \rvert \lVert \Pi_K f \rVert_{s +s_0+2\tau+1}\Big)\,, 
\end{equation} }
for $ s \in [s_0 - 1 , s_0 + \mathtt b]$.
Then we have
{
\begin{align*}
\|u\|_{s+1}	=\|\Pi_K^{\perp} f(\lambda_1) + f(\lambda_1)\cdot \partial_\theta  g(\lambda_1)  \|_{s+1} & \leq   \| f(\lambda_1)\|_{s+1} \big( 1 + C \| g (\lambda_1)\|_{s + 2} \big)  \nonumber\\
	& \stackrel{\eqref{mao5}}{\le}  \| f(\lambda_1)\|_{s+1} \Big( 1+C  \gamma^{- 1} \| \Pi_K f(\lambda_1) \|_{s+\tau+2} \Big) \nonumber\\
	& \stackrel{\eqref{ansatz KAM step variazioni}}{\lesssim_s}  \| f(\lambda_1) \|_{s+1}
	\end{align*}
 }
taking $\delta'$ in \eqref{ansatz KAM step variazioni} small enough.
The above estimates imply that 
{\begin{align*}
& \| (\Delta_{12} \Phi^{- 1})\big[ \Pi_K^{\perp} f(\lambda_1) + f(\lambda_1)\cdot \partial_\theta  g (\lambda_1)  \big] \|_{s}  \nonumber\\
& \lesssim_s\gamma^{-1} \| f \|_{s + 1}  \Big(\lVert  \Pi_K\Delta_{12} f  \rVert_{s+s_0 +\tau}+\gamma^{-1}\lvert \Delta_{12} \alpha  \rvert \lVert \Pi_K f \rVert_{s +s_0+2\tau+1}\Big).
\end{align*}}
Specializing the above estimate for $s = s_0 - 1$ and $s = s_0 + \mathtt b$, one gets 
{\begin{align}
& \| (\Delta_{12} \Phi^{- 1})\big[ \Pi_K^{\perp} f(\lambda_1) + f(\lambda_1) \cdot \partial_\theta g (\lambda_1)  \big] \|_{s_0 - 1}  \nonumber\\
& \lesssim_{s_0 } \gamma^{-1} K^{ \tau + s_0}  \| \Delta_{12} f \|_{s_0 - 1} {\cal M}_{s_0}(f, \lambda_1, \lambda_2)    +  \gamma^{- 2} K^{2 \tau + s_0  }\lvert \Delta_{12} \alpha  \rvert {\cal M}_{s_0}(f , \lambda_1, \lambda_2)^2\,, \label{berlusconi 0}
\end{align}}
\begin{equation}\label{pappalardo 1}
\begin{aligned}
& \| (\Delta_{12} \Phi^{- 1})\big[ \Pi_K^{\perp} f(\lambda_1) + f(\lambda_1)\cdot \partial_\theta g (\lambda_1)  \big]\|_{s_0 + \mathtt b} \\
&\lesssim_{s_0, \mathtt b}\gamma^{-1} \| f \|_{s_0+\tb + 1}  \Big(\lVert  \Pi_K\Delta_{12} f  \rVert_{2s_0+\tb +\tau}+\gamma^{-1}\lvert \Delta_{12} \alpha  \rvert \lVert \Pi_K f \rVert_{2s_0+\tb+2\tau+1}\Big)\\
& \stackrel{\eqref{money}, 3s_0 + \mathtt b + 2 \tau + 2 \leq s_1, \eqref{ansatz KAM step variazioni}}{\lesssim_{s_0, \mathtt b}} K^{ \tau + s_0 } \Big(  \lVert  \Delta_{12} f  \rVert_{s_0 + \mathtt b} +  |\Delta_{12} \alpha | \Big) \,.
\end{aligned}
\end{equation}
Furthermore, by Lemma \ref{change}, 
for any $s \in [s_0, s_0 + \mathtt b]$
	\begin{equation}
\begin{aligned}
\| \widetilde g (\lambda_2)\|_{s + s_0} & \stackrel{\eqref{stima widetilde alpha}}{\lesssim_s}  \| g (\lambda_2)\|_{s + 2 s_0} \stackrel{\eqref{money}}{\lesssim_s}   \gamma^{- 1} \| \Pi_K f (\lambda_2) \|_{s + 2 s_0+\tau}  \\
& \stackrel{ \mathtt b + 3 s_0+ \tau \leq s_1}{\lesssim_s}   \gamma^{- 1} \| f(\lambda_2) \|_{s_1} \stackrel{\eqref{ansatz KAM step variazioni}}{\lesssim_s} 1
\end{aligned}
\end{equation}
provided that $\delta'$ in \eqref{ansatz KAM step variazioni} is small enough. 
{ In this way we have, for any $s \in [s_0, s_0 + \mathtt b]$,
	 \[
\|\Phi^{- 1}[u]\|_{s}\lesssim_s \|u\|_{s} (1+ \|\tilde g\|_{s+s_0}) \lesssim_s \|u\|_s,
\]
and consequently
\begin{align*}
	& \| \Phi^{- 1}(\lambda_2) \big[ \Pi_K^\bot \Delta_{12} f + (\Delta_{12} f) \cdot \partial_\theta g (\lambda_1) + f(\lambda_2) \cdot \partial_\theta (\Delta_{12} g ) \big] \|_{s} \\
	& \lesssim_{s} \| \Pi_K^\bot \Delta_{12} f \|_{s} +  \| \Delta_{12} f \|_{s} \| g (\lambda_1) \|_{s+1} + \| f(\lambda_2) \|_{s} \| \Delta_{12} g  \|_{s+1}   \\
	&\lesssim_s \| \Pi_K^\bot \Delta_{12} f \|_{s} +  \g^{-1}\| \Delta_{12} f \|_{s} \| \Pi_K f (\lambda_1) \|_{s+1+\tau} \\ &+ \gamma^{-1}\| f(\lambda_2) \|_{s}  \Big(\lVert  \Pi_K\Delta_{12} f  \rVert_{s +\tau+1}+\gamma^{-1}\lvert \Delta_{12} \alpha  \rvert \lVert \Pi_K f \rVert_{s +2\tau+2}\Big).
	\end{align*}}
Then, using also Lemma \ref{lemma interpolazione smoothing}-$(iv)$, we have
{for $s=s_0-1$
\begin{align}
& \| \Phi^{- 1}(\lambda_2) \big[ \Pi_K^\bot \Delta_{12} f + (\Delta_{12} f) \cdot \partial_\theta g (\lambda_1) + f(\lambda_2) \cdot \partial_\theta (\Delta_{12} g ) \big] \|_{s_0 - 1} \nonumber\\
& \stackrel{\eqref{lemma interpolazione smoothing}-(iv)}{\lesssim_{s_0, \mathtt b}} K^{- 1- \mathtt b}  \| \Delta_{12} f\|_{s_0 + \mathtt b} + K^{\tau} \gamma^{- 1}\| \Delta_{12} f \|_{s_0 - 1} {\cal M}_{s_0}(f, \lambda_1, \lambda_2) \nonumber\\
& \qquad + K^{2 \tau }\gamma^{- 2} |\Delta_{12} \alpha | {\cal M}_{s_0}(f, \lambda_1, \lambda_2)^2, \label{pappalardo 3-a}
\end{align}
similarly for $s=s_0+\tb$.
\begin{align}
& \| \Phi^{- 1}(\lambda_2) \big[ \Pi_K^\bot \Delta_{12} f + (\Delta_{12} f) \cdot \partial_\theta g(\lambda_1) + f(\lambda_2) \cdot \partial_\theta (\Delta_{12} g )\big] \|_{s_0 + \mathtt b} \nonumber\\
& \stackrel{\eqref{money}, \eqref{monkey}, s_0 + \mathtt b + 2\tau + 2 \leq s_1}{\lesssim_{s_0, \mathtt b}}\| \Pi_K^\bot \Delta_{12} f \|_{s_0 + \mathtt b} +  K^\tau  \gamma^{- 1}\| \Delta_{12} f \|_{s_0 + \mathtt b} {\cal M}_{s_1}(f, \lambda_1, \lambda_2) \nonumber\\
& \qquad  + K^{2 \tau + 1}\gamma^{- 2} |\Delta_{12} \alpha | {\cal M}_{s_1}(f, \lambda_1, \lambda_2)^2   \stackrel{\eqref{ansatz KAM step variazioni}}{\lesssim_{s_0, \mathtt b}} K^{2\tau+1} \big( \| \Delta_{12} f\|_{s_0 + \mathtt b} + |\Delta_{12} \alpha | \big) \,. \label{pappalardo 3}
\end{align}
}
The estimate \eqref{tordo} then follows by recalling \eqref{dollariNograzie} and by applying the estimates \eqref{pappalardo 1}, \eqref{pappalardo 3}.

\end{proof}
\subsection{KAM iteration}
Now we describe the iteration of the KAM step.
\begin{lem}\label{ittero}
Consider the vector field $X_0$ in \eqref{mao1}.
Recall \eqref{tripoli} and set
\begin{equation}\label{tordo3}
\begin{aligned}
	& \chi:= \frac32 \,,\quad  \mu > 4  \tau + 2 s_0 + 4  ,\quad \varrho > 2 \tau + 2 s_0 + 1  ,\quad  s_1 > \chi \mu + s_0 , \\
	&  \kappa > 8 \tau + 2 s_0 + 4\,, \quad    \tb> \mu \chi + \kappa + 1 .
	\end{aligned}
\end{equation}
	There exists $K_0$ depending on $s_0, \nu$ and $\delta_*:=\delta_*(s_1)$ small such that if
	\begin{equation}\label{small}
	\delta_0(s_1) K_0^{\varrho}\le \delta_* , \qquad \mbox{where} \qquad\delta_0(s_1):=\gamma^{-1} \lVert f_0 \rVert_{s_1}^{\gamma, \cO_0}
	\end{equation}
	then, for all $n\ge 0$, the following holds. We set $K_n:=K_0^{\chi^n}$, $\chi:=3/2$ and
	\begin{equation}\label{buoneacque2}
	\mathcal{O}_{n+1}= \mathcal C_{K_n,\mathcal O_n}:=\left\{ \xi \in \mathcal{O}_n : \lvert \alpha_n(\xi) \cdot k \rvert\geq  \frac{{\gamma}}{ \langle k  \rangle^{\tau}},  \quad \forall k \in \mathbb{Z}^{N} \setminus\{ 0 \}, \quad | k |\le K_n  \right\}
	\end{equation}
	and for all $\xi \in \mathcal{O}_{n+1}$ we set $g _{n+1}(\theta ; \xi ) $ to be 
	\begin{equation}\label{defbeta}
	\begin{aligned}
	g_{n+1}( \theta  ; \xi ) & := \sum_{|k |\le K_n}g^{(n+1)}_{k} e^{\im k \cdot \theta }\,, \\
	 g^{(n+1)}_{k}& := \frac{a^{(n)}_{k}}{\mathtt i \alpha_n \cdot k} \,, \quad k \in \Z^{N} \setminus \{ 0\}\,, \quad |k| \leq K_n
	\end{aligned}
\end{equation}
and
\begin{equation}\label{colomba}
\delta_n(s):=\gamma^{-1} \lVert f_n \rVert_{s}^{\gamma, \cO_n}.
\end{equation}	

\smallskip

Moreover we set 
\begin{equation}\label{lambda M}
\lambda:=\lambda(s):=1/(s-s_0+1), \quad \mathtt{M}(s):=\max\{\delta_{0}(s_1),\delta_{0}(s)\}.
\end{equation}

Then the following holds.

\medskip

\noindent$(\mathcal{P}_1)_{n}$. Set $g_0=0$.  For all $n\ge 0$ the torus diffeomorphism $\Phi_{n}: \theta \mapsto \theta + g_n(\theta)$ is well defined and the induced operator \eqref{grotta3} acts on $H^s$ to itself $\forall s \geq s_0$. Setting
\begin{equation}\label{zanzur}
X_{n}:= (\Phi_{n})_*X_{n-1}:=  \big( \alpha_n(\xi) + f_n(\theta; \xi) \big) \cdot \frac{\partial}{\partial \theta} 
\end{equation}
we have the bounds 
\begin{align}\label{proto}
&	|\alpha_{n}-\alpha_{n-1}|^{\g}\lesssim   \g \delta_0(s_1)K_{n}^{-\mu} K_0^\mu\,, \quad |\alpha_n|^{lip}\le M_0+C \gamma^{-1}\delta_0(s_1) 
\end{align}
and there exists a positive constant $C_1(s)$ such that
\begin{align}
\label{colomba1}
&\delta_{n}(s_0)\le \delta_0(s_1) \,K_0^{\mu}\,K_{n}^{-\mu},\quad 
\delta_{n}(s)\le C_1(s)\, \delta_0(s)(1 + \sum_{j=1}^n 2^{-j})\,,\quad s\geq s_0 .
\end{align}
As a consequence 
\begin{align}
\label{colomba 1 2}
& \delta_n(s) \leq  C (s)\, K_{n}^{-\la \mu} K_0^{\la\mu}\mathtt{M}(s+1)\,,  \\
\label{pagliuzza2}
&\| g_{n}\|_s^{\g, \cO_{n }} \le \delta_n(s+2\tau+1)\le C_2(s)\,  K_{n}^{-\la \mu} K_0^{\lambda\mu}\,\mathtt{M}(s + 2 \tau + 2), \quad s\geq s_0
	\end{align}
	for some $C_2(s)>0$.

\medskip

\noindent $(\mathcal{P}_2)_{n}$. The torus diffeomorphism defined by 
\begin{equation}\label{zanzur2}
\begin{cases}
\Psi_0= \mathrm{I},\\
\Psi_n= \Phi_n\circ\Psi_{n-1}
\end{cases}
\end{equation}
 is of the form 	$\Psi_{n}: \theta\mapsto \theta+h_n(\theta)$ with, for all $s\geq s_0$, (recall \eqref{pagliuzza2} for the definition of $\mathtt{M}(s)$)
\begin{equation}\label{mao8ancora}
\|h_n\|_{s_0}^{\g, \cO_{n }} \le C(s_0) \delta_0({s_1}) \sum_{j=0}^n 2^{-j}\,,\quad \|h_n\|^{\g, \cO_{n }}_s \le C_3(s)\, \mathtt{M}(s+2 \tau + s_0 + 2)\,\sum_{j=0}^{n} 2^{-j} \,,
\end{equation}
\begin{equation}\label{misonorotta}
\|h_{n-1} -h_n\|^{\g, \cO_{n }}_{s_0}\le C(s_0) \delta_0(s_1)2^{-n}  
\,,\quad 
\|h_{n-1}-h_n\|^{\g,  \cO_{n }}_s \le C_4(s)\, \mathtt{M}(s+2 \tau + s_0 + 3) 2^{-n}.
\end{equation}
Moreover, if $f_0$ is a reversible vector fields, then $\theta \mapsto \theta + g_n(\theta)$, $\theta \mapsto \theta + h_n(\theta)$ are reversibility preserving maps and $f_n$ is a reversible vector field. 

\medskip

\noindent $(\mathcal{P}_3)_{n}$. Let $\lambda_1, \lambda_2 \in B_E$.
There exists a constant $C_*(s_1) > 0$ and $\tilde{\delta}:=\tilde{\delta}(s_1)$ such that if
\begin{equation}\label{condizione di piccolezza iterazione variazioni}
K_0^{2 \tau +s_0 +\chi\mu}   \gamma^{- 1} \big( \| f_0(\lambda_1) \|_{s_1} + \| f_0(\lambda_2) \|_{s_1 }  \big) \leq \tilde{\delta}
\end{equation}
then for any $n \geq 0$, for any $\xi \in {\cal O}_{n }(\lambda_1) \cap {\cal O}_{n }(\lambda_2)$, the following estimates hold:  
\begin{align}
 \| \Delta_{12} f_n\|_{s_0 - 1}& \leq C_*(s_1)  K_n^{- \mu} \| \Delta_{12} f_0\|_{s_0 + \mathtt b} \,, \label{stima Delta 12 an bassa}\\
 \| \Delta_{12} f_n\|_{s_0 + \mathtt b}& \leq C_*(s_1)K_n^{\kappa} \| \Delta_{12} f_0\|_{s_0 + \mathtt b} \,, \label{stima Delta 12 an alta} \\
 |\Delta_{12}(\alpha_{n+1} - \alpha_{n })|& \leq   \| \Delta_{12} f_{n }\|_{s_0 - 1} , \label{stima delta 12 m n m n - 1} \\
 |\Delta_{12} \alpha_n| & \lesssim  \| \Delta_{12} f_0 \|_{s_0 + \mathtt b}, \label{Delta 12 m n} \\
 \| \Delta_{12} h_n\|_{s_0 - 1} & \leq C_*(s_1) \g^{-1} \sum_{j = 0}^{n} 2^{- j} \| \Delta_{12} f_0\|_{s_0  + \mathtt b}\,.  \label{stima Delta 12 beta n}
\end{align}

\medskip

\noindent $(\mathcal{P}_4)_{n}$. Let $\lambda_1, \lambda_2 \in B_E$, $0 < \gamma - \rho < \gamma < 1$ satisfy
\begin{equation}\label{piccolezza delta 12 a0 S4}
K_{n - 1}^{\tau + 1} \| \Delta_{12} f_0\|_{s_0 + \mathtt b} \leq \rho\,. 
\end{equation} 
 Then ${\cal O}_n^\gamma(\lambda_1) \subseteq {\cal O}_n^{\gamma - \rho}(\lambda_2)$. 
\end{lem}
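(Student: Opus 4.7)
The plan is to prove $(\mathcal{P}_1)_n$--$(\mathcal{P}_4)_n$ simultaneously by induction on $n$. For $n=0$ one sets $g_0 = h_0 = 0$, $\Phi_0 = \Psi_0 = \mathrm{Id}$, and all bounds either reduce to tautologies or directly to the definition \eqref{small} of $\delta_0(s_1)$ together with the trivial monotonicity $\delta_0(s_0) \leq \delta_0(s_1)$. For the inductive step I would apply Lemma \ref{lemma KAM step} to $X_n$ with $K = K_n$, $\mathcal{O} = \mathcal{O}_n$, $\mathcal{O}_+ = \mathcal{O}_{n+1}$: the smallness condition \eqref{piccolezza} at step $n$ becomes
\[ K_n^{2\tau + 2s_0 + 1}\delta_n(s_0) \lesssim \delta_0(s_1)\,K_0^{\mu}\,K_n^{2\tau+2s_0+1 - \mu}, \]
by the inductive $(\mathcal{P}_1)_n$-bound \eqref{colomba1}, and this is smaller than $\delta(s_0)$ since $\mu > 2\tau + 2s_0 + 1$ by \eqref{tordo3} and since \eqref{small} controls the factor $K_0^\mu \delta_0(s_1)$.

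\smallskip

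Once the KAM step applies, the bounds \eqref{nomentana}, \eqref{money} and \eqref{mao0} immediately yield \eqref{proto}, \eqref{pagliuzza2} and \eqref{colomba1} at level $n+1$ modulo geometric decay. The crucial iterative inequality in low norm reads
\[ \delta_{n+1}(s_0) \lesssim K_n^{s_0 - s_1}\delta_n(s_1) + C_{s_0}\,K_n^{2\tau+2}\,\delta_n(s_0)^2, \]
and combining the inductive bound $\delta_n(s_1) \lesssim \delta_0(s_1)$ with the decay $\delta_n(s_0) \leq \delta_0(s_1)K_0^\mu K_n^{-\mu}$, both terms are dominated by $\tfrac12 \delta_0(s_1) K_0^\mu K_{n+1}^{-\mu}$ as soon as $s_1 \geq s_0 + \chi\mu$ and $\mu(2-\chi) \geq 2\tau + 2$: both conditions are implied by \eqref{tordo3}. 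For $s > s_0$, the multiplicative estimate $\delta_{n+1}(s) \leq \delta_n(s)\big(1 + C_s K_n^{2\tau + 2s_0 + 1}\delta_n(s_0)\big)$ gives an infinite product that converges because of the geometric decay of $\delta_n(s_0)$, producing the second bound in \eqref{colomba1}.

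\smallskip

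For $(\mathcal{P}_2)_{n+1}$ I would use the identity $\Psi_{n+1}(\theta) = \Psi_n(\theta) + g_{n+1}(\Psi_n(\theta))$, so that $h_{n+1} = h_n + g_{n+1}\circ\Psi_n$; then $\|h_{n+1} - h_n\|_s$ is controlled by $\|g_{n+1}\|_s$ times a tame factor in $\|h_n\|_s$ via the composition estimates of the Appendix, yielding \eqref{misonorotta} once \eqref{pagliuzza2} is inserted, and a telescopic sum recovers \eqref{mao8ancora}. Reversibility propagates step-by-step through the corresponding statement in Lemma \ref{lemma KAM step}. For $(\mathcal{P}_3)_{n+1}$ I would plug the inductive bounds \eqref{stima Delta 12 an bassa}--\eqref{Delta 12 m n} into \eqref{tordo}: schematically one obtains
\[ \|\Delta_{12}f_{n+1}\|_{s_0-1} \lesssim \big( K_n^{\kappa - 1 - \mathtt{b}} + K_n^{\tau + s_0 - 2\mu}K_0^\mu\gamma\delta_0(s_1) + K_n^{2\tau + s_0 - 2\mu}K_0^{2\mu}\gamma\delta_0(s_1) \big)\|\Delta_{12}f_0\|_{s_0+\mathtt{b}}, \]
which is dominated by $C_*(s_1)K_{n+1}^{-\mu}\|\Delta_{12}f_0\|_{s_0+\mathtt{b}}$ exactly thanks to $\mathtt{b} > \chi\mu + \kappa + 1$ and $\mu > 4\tau + 2s_0 + 4$. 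The high-norm estimate \eqref{stima Delta 12 an alta} and \eqref{stima delta 12 m n m n - 1} come from the second line of \eqref{tordo} and from \eqref{nomentana2}, while \eqref{Delta 12 m n} follows by telescoping $\Delta_{12}\alpha_{n+1} = \sum_{k=0}^{n}\Delta_{12}(\alpha_{k+1}-\alpha_k)$ and summing the convergent series produced by \eqref{stima Delta 12 an bassa}. Finally, for $(\mathcal{P}_4)_{n+1}$, if $\xi \in \mathcal{O}_{n+1}^\gamma(\lambda_1)$ and $0 < |k| \leq K_n$, the mean-value bound
\[ |\alpha_{n+1}(\lambda_2)\cdot k| \geq |\alpha_{n+1}(\lambda_1)\cdot k| - |\Delta_{12}\alpha_{n+1}|\,|k| \geq \frac{\gamma}{\langle k\rangle^\tau} - K_n\,|\Delta_{12}\alpha_{n+1}|, \]
together with \eqref{Delta 12 m n} and hypothesis \eqref{piccolezza delta 12 a0 S4}, gives $|\alpha_{n+1}(\lambda_2)\cdot k| \geq (\gamma - \rho)/\langle k\rangle^\tau$, i.e.\ $\xi \in \mathcal{O}_{n+1}^{\gamma - \rho}(\lambda_2)$.

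\smallskip

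The main obstacle I anticipate is the exponent bookkeeping in $(\mathcal{P}_3)_n$, where three competing powers of $K_n$ and two negative powers of $\gamma$ must simultaneously be swallowed by the target decay $K_{n+1}^{-\mu}$; the choice of constants $\mu, \varrho, \kappa, \mathtt{b}, s_1$ codified in \eqref{tordo3} is designed so that all these inequalities close at once, and the technical heart of the proof is verifying this closure. The rest of the argument, though lengthy, is standard KAM bookkeeping based on the single-step estimates already established in Lemma \ref{lemma KAM step}.
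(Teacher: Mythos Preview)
Your plan is correct and matches the paper's proof almost line by line: both apply Lemma~\ref{lemma KAM step} iteratively, verify \eqref{piccolezza} via the inductive low-norm decay, close the low/high norm recursions using exactly the exponent conditions in \eqref{tordo3}, and propagate the $\Delta_{12}$-bounds through \eqref{tordo}. Two minor corrections: in $(\mathcal P_2)$ the paper uses the recursion $h_{n+1}(\theta)=g_{n+1}(\theta)+h_n(\theta+g_{n+1}(\theta))$ rather than your $h_{n+1}=h_n+g_{n+1}\circ\Psi_n$ (either formula yields the same estimates); and in $(\mathcal P_4)_{n+1}$ the set $\mathcal O_{n+1}$ is defined via $\alpha_n$, not $\alpha_{n+1}$, and before invoking \eqref{Delta 12 m n} you must use the inductive hypothesis $(\mathcal P_4)_n$ to place $\xi$ in $\mathcal O_n^{\gamma-\rho}(\lambda_1)\cap\mathcal O_n^{\gamma-\rho}(\lambda_2)$, which is the domain on which that estimate is available.
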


\begin{proof}
The statements $(\mathcal{P}_{1,2})_0$ are trivial. $(\mathcal{P}_3)_0$ follows taking $C_*(s_1)$ large enough, for instance $C_*(s_1)> K_0^{\mu}$. The statement $(\mathcal{P}_3)_0$ holds by setting ${\cal O}_0^{\gamma}(\lambda_1) = {\cal O}_0 = {\cal O}_0^{\gamma - \rho}(\lambda_2)$.

\noindent 
Now suppose that $(\mathcal{P}_{1, 2})_n$ hold and we prove that $(\mathcal{P}_{1, 2})_{n+1}$ also hold. 
	
\medskip	
	
\noindent	\textbf{Proof of $(\mathcal{P}_{1})_{n+1}$.}
We have to prove that the $(n+1)$-th   diffeomorphism of the torus is well defined from $H^{s}$ to itself for all $s \geq s_0$. In particular, we show that \eqref{piccolezza} holds with  $K\rightsquigarrow K_n$ and $f\rightsquigarrow f_n$.\\
We have
\begin{equation}\label{ice}
\delta_n(s_0) K_n^{2\tau+2 s_0+1}\le \delta_0(s_1)\, K_n^{2\tau+2 s_0+1-\mu} K_0^{\mu}.
\end{equation}
By \eqref{tordo3}
	$\mu>2\tau+2 s_0+1$. Hence $K_n^{2\tau+2 s_0+1-\mu}$ is a decreasing sequence and by \eqref{small}, \eqref{tordo3} ($\rho>2\tau+2 s_0+1$) 
\begin{equation}\label{mao8'}
\delta_0(s_1)\, K_n^{2\tau+2 s_0+1-\mu} K_0^{\mu}\le  \delta_0(s_1) K_0^{2\tau+2 s_0+1}\le \delta_*.
\end{equation}
Then by \eqref{ice}, \eqref{mao8'} and taking $\delta_*\le \delta$ (recall \eqref{small} and \eqref{piccolezza}) we get our first claim
\[
 \delta_n(s_0) K_n^{2\tau+2 s_0+1}\le \delta.
\]
In order to prove \eqref{colomba1} we apply the KAM step with $f_+\rightsquigarrow f_{n+1}$ .\\
We start by estimating the low norm. By \eqref{mao6}
\begin{equation}\label{mao9}
\begin{aligned}
\delta_{n+1}(s_0)	&\le \g^{-1}(K_n^{s_0-s_1}\| f_n\|_{s_1}^{\g,\cO_{n}} +C_{s_0}\g^{-1} K_n^{2\tau+2} (\|f_n\|^{\g,\cO_n}_{s_0})^2)(1+C_{s_0} \g^{-1}K_n^{2\tau+2 s_0+1}\|f_n\|^{\g,\cO_n}_{s_0} )\\
&
\le
(K_n^{s_0-s_1}\delta_n(s_1) +C_{s_0} K_n^{2\tau+2} (\delta_n(s_0)^2)(1+C_{s_0} K_n^{2\tau+ 2 s_0+1}\delta_n(s_0)\ ).
\end{aligned}
\end{equation}
We first note that $C_{s_0} K_n^{2\tau+ 2 s_0+1-\mu}\,K_0^\mu\,\delta_0(s_1)<1$, indeed, since $\mu> 2\tau+2 s_0+1$, this is a decreasing sequence and by \eqref{small} (taking $\delta_*$ small enough) and $\varrho>2\tau+2 s_0+1$ (see \eqref{tordo3})
\[
C_{s_0} K_0^{2\tau+2 s_0}\,\delta_0(s_1)\le C_{s_0} \delta_*<1.
\]

Hence (recall \eqref{lambda M})
\[
\delta_{n+1}(s_0)\le 2 K_n^{s_0-s_1}\delta_n(s_1) +2 C_{s_0} K_n^{2\tau+2} \delta_n(s_0)^2\le \delta_0(s_1)\,K_{n+1}^{-\lambda \mu} K_0^{\lambda \mu} 
\]
provided that
	\begin{equation}
	\begin{cases}
	2\delta_n(s_1) K_n^{-(s_1-s_0)}<\frac{1}{2} \delta_0(s_1) \,K_0^{\mu}\,K_{n+1}^{-\mu},\\
	2\,C_{s_0} \,\delta_n(s_0)^2 K_n^{2\tau+2} <\frac{1}{2} \delta_0(s_1) \,K_0^{\mu}\,K_{n+1}^{-\mu}.
	\end{cases}
	\end{equation}

	Thus, by the inductive hypotesis \eqref{colomba1}, we have to prove 
	\begin{equation}\label{mao9'}
		8 C_{1}(s_1) K_n^{-(s_1-s_0)+\chi\mu} K_0^{-\mu}<1\,,\quad 	4 C_{s_0} \delta_0(s_1) K_0^{\mu} K_n^{2\tau+2-(2-\chi)\mu}<1.
	\end{equation}
	By \eqref{tordo3} we have
	\begin{equation}\label{pane2}
	s_1-s_0>\chi\mu\,,\quad \mu > \frac{2\tau+2}{2-\chi}=4\tau+4\stackrel{(\ref{tripoli})}{=}4\nu+12,
	\end{equation}
	then the sequences in \eqref{mao9'} are decreasing and we just need
	\[
	8 C_1(s_1) K_0^{-(s_1-s_0)+\mu (\chi-1)}<1\,,\quad 4 C_{s_0} \delta_0(s_1)  K_0^{2\tau+2+\mu(\chi-1)}<1,
	\]
which follows by taking  $K_0$ sufficiently large (depending on $C_{s_0}$ and $C_1(s_1)$ in \eqref{colomba1}) and by \eqref{small}, since
	\[
\varrho>2\tau+ 	2+\mu(\chi-1)=2\tau+2+\frac{\mu}{2}.
	\]

Regarding the estimates in high norm, by \eqref{mao0} we have for all $s> s_0$
\begin{equation}\label{bled3}
	\lVert f_{n+1} \rVert_{s}^{{\g},\mathcal O_{n+1}} \le \lVert f_n \rVert^{\g,\mathcal O_n}_{s}+  C_s  {\gamma}^{-1} K_n^{2\tau+1+2 s_0}\lVert f_n \rVert^{\g,\mathcal O_n}_{s_0} \lVert f_n \rVert^{\g,\mathcal O_n}_{s}.
\end{equation}
First we prove the following bounds for $s>s_0$
\begin{align}
& \delta_n(s) \le (\mathtt{C}(s))^n \delta_0(s) (1+\sum_{j=1}^n 2^{-j}), \quad 0\le n \le n_0(s), \label{bled}\\ 
& \delta_n(s) \le (\mathtt{C}(s))^{n_0(s)} \delta_0(s) (1+\sum_{j=1}^n 2^{-j}), \quad n\geq n_0(s),\label{bled2}
\end{align}
for some constant $\mathtt{C}(s)$ and for a suitable $n_0(s)$. For $n=0$ \eqref{bled} holds by taking $\mathtt{C}(s)\geq 1$. For $n\le n_0(s)-1$ we have, by \eqref{bled3},
\begin{align*}
\delta_{n+1}(s) &\le \delta_n(s)(1+C_s \gamma K_n^{2\tau+2 s_0+1}\delta_n(s_0))\\
&\le (\mathtt{C}(s) )^n \delta_0(s) (1+\sum_{j=1}^n 2^{-j}) (1+C_s \gamma K_n^{2\tau+2 s_0+1}\delta_n(s_0))\\
&\le (\mathtt{C}(s))^{n+1} \delta_0(s)  (1+\sum_{j=1}^{n+1} 2^{-j}) 
\end{align*}
provide that
\begin{equation}\label{bled4}
\frac{C_s}{\mathtt{C}(s)}\gamma K_n^{2\tau+2 s_0+1-\mu} K_0^{\mu}\delta_0(s_1)\le \frac{2^{-(n+1)}}{1+\sum_{j=1}^n 2^{-j}}.
\end{equation}
Considering that $n\le n_0(s)-1$, by \eqref{tordo3} and \eqref{small} with $\delta_*$ small enough, we get \eqref{bled4}.\\
Now consider $n\geq n_0(s)$. By \eqref{bled3} we have
\begin{align*}
\delta_{n+1}(s) &\le \delta_n(s)(1+C_s \gamma K_n^{2\tau+2 s_0+1}\delta_n(s_0))\\
&\le (\mathtt{C}(s) )^{n_0(s)} \delta_0(s) (1+\sum_{j=1}^n 2^{-j}) (1+C_s \gamma K_n^{2\tau+2 s_0+1}\delta_n(s_0))\\
&\le (\mathtt{C}(s))^{n_0(s)} \delta_0(s)  (1+\sum_{j=1}^{n+1} 2^{-j}) 
\end{align*}
provide that
\begin{equation}\label{bled5}
C_s \gamma K_n^{2\tau+2 s_0+1-\mu} K_0^{\mu}\delta_0(s_1)\le \frac{2^{-(n+1)}}{1+\sum_{j=1}^n 2^{-j}}.
\end{equation}
The bound \eqref{bled5} follows by \eqref{tordo3}, \eqref{small} and by choosing $n_0(s)$ large enough. Hence we proved the second estimate in \eqref{colomba1} by setting
\[
C_1(s):=(\mathtt{C}(s))^{n_0(s)}.
\]

 
\medskip 
 
Now we prove \eqref{pagliuzza2}. For $s\geq s_0 $,  setting $\lambda=1/(s-s_0+1)$, we have
\begin{equation}\label{A1b}
\|f_{n}\|^{\g, \calO_{n}}_{s}\leq (\| f_{n}\|_{s_0}^{\g, \calO_{n}})^{\lambda}(\| f_{n}\|^{\g, \calO_{n}}_{s+1})^{1-\lambda},  
\end{equation}
from which we may deduce that (recall \eqref{colomba1})
\begin{equation}\label{mao10}
\delta_{n}(s) \le (K_{n}^{- \mu} K_0^{\mu}\delta_{0}(s_1))^\lambda (\delta_{n}(s+1))^{1-\lambda}\le 2\, C_1(s+1)\, K_{n}^{-\lambda \mu} K_0^{\lambda\mu}\mathtt{M}(s+1).
\end{equation}
By \eqref{mao10}
\begin{equation}\label{tse0}
\| g_{n+1}\|^{\g, \calO_{n+1}}_{s}\leq \delta_{n}(s+2\tau+1) \le2\, C_1(s+2\tau+2)\, K_{n}^{-\lambda \mu} K_0^{\lambda\mu}\mathtt{M}(s+2\tau+2) \,,\quad s\geq s_0
\end{equation}
which is \eqref{pagliuzza2} taking $C_2(s)\geq 2\, C_1(s+2\tau+2)$.
The bounds \eqref{colomba1} trivially implies \eqref{proto}.

\medskip

\noindent \textbf{Proof of $(\mathcal{P}_2)_{n+1}$.}
By construction 
\begin{equation}\label{formula iterativa beta n}
h_{n+1}(\theta)= g_{n+1}(\theta) +h_n(\theta+ g_{n+1}(\theta))
\end{equation}
thus, by \eqref{larana},   {for $s\in\mathbb{N}$, $s\geq s_0$ }
\begin{equation}\label{pane}
\begin{aligned}
\|h_{n+1}\|_{s}^{\g,\cO_{n+1}} &\le \| g_{n+1}\|^{\g, \calO_{n+1}}_s + \|h_n\|^{\g, \calO_{n}}_{s}(1+C_s\| g_{n+1}\|^{\g, \calO_{n+1}}_{s_0+1})\\
& + C_s \|h_n\|^{\g, \calO_{n}}_{s_0}\| g_{n+1}\|^{\g, \calO_{n+1}}_{s+s_0}.
\end{aligned}
\end{equation}
First we show the following. By fixing an opportune $n_0(s)\in\mathbb{N}$, we have the bounds
\begin{align}
& \lVert h_n \rVert_s\le (\mathtt{C}(s))^n \mathtt{M}(s+2\tau+s_0+2)\sum_{j=0}^{n} 2^{-j} \quad 0\le n\le n_0(s),\label{prima}\\ 
& \lVert h_n \rVert_s\le (\mathtt{C}(s))^{n_0(s)} \mathtt{M}(s+2\tau+s_0+2)\sum_{j=0}^{n} 2^{-j} \quad n \geq n_0(s).\label{seconda}
\end{align}

We recall that $h_0:=\alpha_0=0$. By \eqref{tse0}, \eqref{mao8ancora}, \eqref{pane} we have for $n\le n_0(s)-1$
\begin{align*}
&\|h_{n+1}\|_{s}^{\g,\cO_n}\le C_2(s)\,K_{n}^{-\la \mu} K_0^{\la\mu}\,\mathtt{M}(s+2\tau+2+s_0)\,(1+C_sC(s_0)\delta_{0}(s_1)\sum_{j=0}^n 2^{-j})  
\\ &
+(\mathtt{C}(s))^n\,\mathtt{M}(s+2\tau+s_0+2)\,\sum_{j=0}^n 2^{-j}(1+C_s K_{n}^{2\tau+2-\mu}K_0^\mu \delta_0(s_1)) \le
\\ &
\le(\mathtt{C}(s))^{n+1}\,\mathtt{M}(s+2\tau+s_0+2)\sum_{j=0}^{n+1} 2^{-j}
\end{align*}
provided that we choose $\mathtt{C}(s)$ such that (recall \eqref{small})
\begin{equation}\label{ultrabook}
 (\mathtt{C}(s))^n\geq 2 K_n^{-\lambda\mu} C_s C(s_0) \delta_0(s_1), \quad \mathtt{C}(s)\geq \max\{ 2(1+C_s), C_2(s) K_0^{\lambda\mu}\}.
\end{equation}
Hence we proved \eqref{prima}. Now, by \eqref{tse0}, \eqref{mao8ancora}, \eqref{pane}, we have for $n\geq n_0(s)$
\begin{align*}
&\|h_{n+1}\|_{s}^{\g,\cO_n}\le C_2(s)\,K_{n}^{-\la \mu} K_0^{\la\mu}\,\mathtt{M}(s+2\tau+2+s_0)\,(1+C_sC(s_0)\delta_{0}(s_1)\sum_{j=0}^n 2^{-j})  
\\ &
+(\mathtt{C}(s))^{n_0(s)}\,\mathtt{M}(s+2\tau+s_0+2)\,\sum_{j=0}^n 2^{-j}(1+C_s K_{n}^{2\tau+2-\mu}K_0^\mu \delta_0(s_1)) \le
\\ &
\le(\mathtt{C}(s))^{n_0(s)}\,\mathtt{M}(s+2\tau+s_0+2)\sum_{j=0}^{n+1} 2^{-j}
\end{align*}
provide that we choose $n_0(s)$ large enough so that (recall $\mu>2\tau+2$)
\[
C_s K_n^{2\tau+2-\mu} K_0^{\mu}\delta_0(s_1)\le \frac{2^{-(n+1)}}{1+\sum_{j=1}^n 2^{-j}}
\]
and by using \eqref{ultrabook} with $n=n_0(s)$.\\
Hence we proved \eqref{mao8ancora} with
\[
C_3(s):=\max\{(\mathtt{C}(s))^{n_0(s)}, C(s_0)\}.
\]
In order to prove the first bound in \eqref{misonorotta} we use \eqref{tse0}, \eqref{mao8ancora}, \eqref{A20b} and we have
\begin{equation}\label{laptop}
\|h_{n+1}-h_n\|^{\g, \calO_{n+1}}_{s_0}\le \lVert g_{n+1} \rVert_{2 s_0}(1+C_{s_0} \lVert h_n \rVert_{s_0+1}^{\gamma, \calO_n}).
\end{equation}
By interpolation we get
\[
\lVert h_n \rVert^{\gamma, \calO_n}_{s_0+1}\le (\lVert h_n \rVert^{\gamma, \calO_n}_{s_0})^{1/2}(\lVert h_n \rVert^{\gamma, \calO_n}_{s_0+2})^{1/2}\stackrel{(\ref{mao8ancora})}{\le} \tilde{C}(s_0) \delta_0(s_1) \sum_{j=0}^n 2^{-j}\le 2 \tilde{C}(s_0) \delta_0(s_1) \stackrel{(\ref{small})}{\le} 1
\]
where
\[
\tilde{C}(s_0):=(C(s_0))^{1/2} (C_3(s_0+2))^{1/2}.
\]
Hence we have by \eqref{laptop} 
\[
\begin{aligned}
\|h_{n+1}-h_n\|^{\g, \calO_{n+1}}_{s_0}\le 2 \lVert g_{n+1} \rVert^{\gamma, \calO_{n+1}}_{2 s_0}\stackrel{(\ref{money})}{\le} K_n^{2\tau+1+s_0} \delta_n(s_0)&\stackrel{(\ref{colomba1})}{\le} \delta_0(s_1) K_n^{2\tau+1+s_0-\mu} K_0^{\mu}\\
&\le C(s_0) \delta_0(s_1) 2^{-(n+1)}
\end{aligned}
\]
provided that $C(s_0)> K_0^{\mu}$, $\mu>2\tau+1+s_0$ and $K_0>1$ is large enough. Now we prove the second bound in \eqref{misonorotta}.\\
By \eqref{tse0}, \eqref{mao8ancora}, \eqref{A20b}, \eqref{pagliuzza2} we have 
\begin{align*}
	\|h_{n+1}-h_n\|^{\g, \calO_{n+1}}_{s} &\le \| g_{n+1}\|^{\g, \calO_{n+1}}_s + C(s) (\|h_n\|^{\g, \calO_{n}}_{s+1}\| g_{n+1}\|^{\g, \calO_{n+1}}_{s_0} + \|h_n\|^{\g, \calO_{n}}_{s_0}\| g_{n+1}\|^{\g, \calO_{n+1}}_{s+s_0}) \\
& \le  C_4(s)\,\mathtt{M}(s+2\tau+s_0+3)\, 2^{-(n+1)}
\end{align*}
provided that $C_4(s)$ is large enough
and
\[
K_n^{-\lambda\mu} K_0^{\lambda\mu}\le 2^{-(n+1)}, \quad K_n^{2\tau+1}\delta_n(s_0)\le 2^{-(n+3)}, \quad 2\delta_0(s_1) K_n^{-\lambda\mu} K_0^{\lambda\mu}\le 1
\]
which hold by taking $K_0>1$ large enough, by \eqref{colomba1} and \eqref{small}.

\noindent
If $f_n$ is a reversible vector field, by Lemma \ref{lemma KAM step} (recall also the definitions \eqref{defbeta})one has that $\theta \mapsto \theta + g_{n + 1}(\theta)$ is a reversibility preserving map and $f_{n + 1} $ is a reversible vector field. Furthermore, since by the inductive hypotheses, $\theta \mapsto \theta + h_n (\theta)$ is a reversibility preserving map, by the formula \eqref{formula iterativa beta n} one immediately gets that $\theta \mapsto \theta + h_{n + 1}(\theta) $ is reversibility preserving too. 

\medskip

\noindent
\textbf{Proof of $(\mathcal{P}_3)_{n+1}$.} 
If we take (recall $\delta_*$ in \eqref{small}, \eqref{tordo3})
$$\tilde{\delta}\le K_0^{-\varrho+2\tau+1}\delta_*,$$
since $f_0(\lambda_1)$ and $f_0(\lambda_2)$ satisfy the smallness assumption \eqref{condizione di piccolezza iterazione variazioni}, then condition \eqref{small} holds for both $f_0(\lambda_1)$ and $f_0(\lambda_2)$ and we can apply the estimates proved in the steps $(\mathcal{P}_1)_{n}$, $(\mathcal{P}_2)_{n}$ obtaining that
\begin{equation}\label{hart schwanz}
\begin{aligned}
& \| f_n(\lambda_1) \|_{s_1}, \| f_n(\lambda_2)\|_{s_1} & \stackrel{(\ref{colomba1})}{\lesssim_{s_1}}  {\cal M}_{s_1 }(f_0, \lambda_1, \lambda_2) .
\end{aligned}
\end{equation}
This estimate implies that \eqref{ansatz KAM step variazioni} is verified by \eqref{condizione di piccolezza iterazione variazioni}. 
Then by applying \eqref{tordo}, one gets that 

\begin{align}
	\| \Delta_{12} f_{n + 1}\|_{s_0 - 1} & \lesssim_{s_0} K_n^{- 1 - \mathtt b}  \| \Delta_{12} f_n \|_{s_0 + \mathtt b} + K_n^{ \tau + s_0 } \| \Delta_{12} f_n \|_{s_0 - 1} {\cal M}_{s_0}(f_n , \lambda_1, \lambda_2) \\
	& \qquad + K_n^{2  \tau + s_0 }\gamma^{- 2} |\Delta_{12} \alpha_n | {\cal M}_{s_0}(f_n, \lambda_1, \lambda_2)^2 \nonumber\\
	& \stackrel{\eqref{colomba1}, \eqref{stima delta 12 m n m n - 1}, \eqref{condizione di piccolezza iterazione variazioni}, \eqref{stima Delta 12 an bassa}, \eqref{stima Delta 12 an alta}}{\lesssim_{s_0, \mathtt b}} K_n^{\kappa - 1 - \mathtt b} \| \Delta_{12} f_0\|_{s_0 + \mathtt b}  \nonumber\\
	& \qquad \qquad + C_*(s_1)K_n^{ 2\tau + s_0 - 2 \mu}  \| \Delta_{12} f_0\|_{s_0 + \mathtt b} K_0^{2\mu}\delta_0(s_1) \nonumber\\
	& \leq C_*(s_1) K_{n + 1}^{- \mu }\| \Delta_{12} f_0\|_{s_0 + \mathtt b}
	\end{align} 
%
provided for any $n \geq 0$,
$$
C(s_0, \mathtt b) K_{n + 1}^{ \mu} K_n^{- \mathtt b -1 + \kappa } \leq \frac{C_*(s_1)}{2}\,, \quad C(s_0, \mathtt b) K_{n + 1}^{ \mu}K_n^{2 \tau + s_0 - 2  \mu} K_0^{2\mu} \delta_0(s_1)\leq \frac{C_*(s_1)}{2}\,.  
$$
As in the previous items the left hand side of these inequalities is decreasing in $n$, since by \eqref{tordo3} we have $\mu > \frac{2 \tau + s_0}{(2 - \chi)}$ , $\mathtt b >  \mu \chi + \kappa  - 1$. Then our claim follows by taking $K_0, C_*(s_1) > 0$ large enough. 
Moreover 
\begin{align*}
\| \Delta_{12} f_{n + 1} \|_{s_0 + \mathtt b} & \lesssim_{s_0, \mathtt b} K_n^{2 \tau + s_0 }\Big(  \| \Delta_{12} f_n \|_{s_0 + \mathtt b} + |\Delta_{12} \alpha_n | \Big)   \\
& \stackrel{\eqref{stima Delta 12 an alta}, \eqref{Delta 12 m n}}{\lesssim_{s_0, \mathtt b}} K_n^{\kappa + 2 \tau + s_0 } \| \Delta_{12} f_0\|_{s_0 + \mathtt b} \leq K_{n + 1}^{\kappa} \| \Delta_{12} f_0 \|_{s_0 + \mathtt b}
\end{align*}
provided $C(s_0, \tb) K_n^{2 \tau + s_0  + \kappa} \leq K_{n + 1}^{\kappa}$ for any $n \geq 0$. Such a condition is fullfilled, by taking $K_0 > 1$ large enough, since by \eqref{tordo3} one has that $(\chi - 1)\kappa > 2 \tau + s_0 $. Therefore, the estimates \eqref{stima Delta 12 an bassa}, \eqref{stima Delta 12 an alta} have been proved at the step $n + 1$. The estimates \eqref{stima delta 12 m n m n - 1}, \eqref{Delta 12 m n} follow by applying \eqref{nomentana2} by using a telescoping argument. 

\noindent
The estimates \eqref{money}, \eqref{mao8ancora}, using that $ 2 \tau + 2 s_0 + 3 \leq s_1$ imply that 
\begin{equation}\label{stima alpha n lambda 1 2}
\begin{aligned}
\| g_n(\lambda_1)\|_{2 s_0}, \| g_n (\lambda_2) \|_{2 s_0} & \lesssim_{s_0} K^{ \tau  + s_0}_n \gamma^{- 1} {\cal M}_{s_0}(f_n, \lambda_1, \lambda_2)  \\
& \lesssim_{s_0}  K_n^{\tau+ s_0 -  \mu} K_0^\mu \delta_0(s_1) \stackrel{\eqref{condizione di piccolezza iterazione variazioni}}{\leq} 1\,, \\
\| h_n(\lambda_1)\|_{s_0 }, \| h_n(\lambda_2)\|_{s_0 } & \lesssim_{s_0} {\cal M}_{s_1}(a_0, \lambda_1, \lambda_2)\,. 
\end{aligned}
\end{equation}
By \eqref{monkey}, \eqref{colomba1}, \eqref{Delta 12 m n} and \eqref{stima Delta 12 an bassa}, \eqref{condizione di piccolezza iterazione variazioni}, one gets the estimate 
\begin{equation}\label{stima Delta 12 alpha n}
\| \Delta_{12} g_n\|_{s_0 - 1} \lesssim_{s_0} \gamma^{- 1}\| \Delta_{12} f_0\|_{s_0 + \mathtt b} K_n^{2 \tau + 1 - \mu}\,.
\end{equation}
By the formula \eqref{formula iterativa beta n} one gets 
$$
\begin{aligned}
h_{n + 1}(\theta ;  \lambda_1) - h_{n + 1}(\theta ; \lambda_2) & = g_{n+1}(\theta ; \lambda_1) -  g_{n+1}(\theta ; \lambda_2)  \\
& \quad +h_n( \theta+ g_{n+1}(\theta; \lambda_1) ; \lambda_1) - h_n(\theta + g_{n+1}(\theta; \lambda_2) ; \lambda_2)\\
& = \Delta_{12} g_{n + 1}(\theta) + (\Delta_{12} h_n)( \theta  + g_{n + 1}(\theta; \lambda_1)) \\
& \qquad + h_n( \theta  + g_{n + 1}( \theta ; \lambda_1); \lambda_2) - h_n( \theta  + g_{n + 1}( \theta ; \lambda_2); \lambda_2)\,.
\end{aligned}
$$
Using the triangular inequality, the mean value theorem, the estimates \eqref{stima alpha n lambda 1 2}, \eqref{stima Delta 12 alpha n}, Lemma \ref{change} and the smallness condition \eqref{condizione di piccolezza iterazione variazioni}, one gets the estimate 
\begin{equation}
\begin{aligned}
\| \Delta_{12} h_{n + 1}\|_{s_0 - 1} &\leq C_{s_0} \gamma^{- 1} \| \Delta_{12} f_0\|_{s_0 + \mathtt b} K_n^{2 \tau + 1 -  \mu} \\
&+ \| \Delta_{12} h_n\|_{s_0 - 1}\Big(1 + C_{s_0} K_n^{2 \tau + 1 + s_0 - \mu} \gamma^{- 1}{\cal M}_{s_1}(f_0, \lambda_1, \lambda_2) \Big)\,.
\end{aligned}
\end{equation}
Then using the induction hypothesis \eqref{stima Delta 12 beta n}, one gets 
\begin{equation}
\begin{aligned}
\| \Delta_{12} h_{n + 1}\|_{s_0 - 1} & \leq C_{s_0} \gamma^{- 1} \| \Delta_{12} f_0\|_{s_0 + \mathtt b} K_n^{2 \tau + 1 -  \mu}  + C_*(s_1) \sum_{j = 0}^{n} 2^{- j} \| \Delta_{12} f_0\|_{s_0 + \mathtt b} \gamma^{- 1} \\
& \quad + C_*(s_1) C_{s_0} K_n^{2 \tau + 1 + s_0 -  \mu} \gamma^{- 1}{\cal M}_{s_1}(f_0, \lambda_1, \lambda_2) \sum_{j = 0}^{n} 2^{- j} \| \Delta_{12} f_0\|_{s_0 + \mathtt b} \gamma^{- 1} \\
& \leq C_*(s_1) \sum_{j = 0}^{n + 1} 2^{- j} \| \Delta_{12} f_0\|_{s_0 + \mathtt b} \gamma^{- 1}
\end{aligned}
\end{equation}
provided 
\[
C_{s_0} K_n^{2\tau+1-\mu}\le C_*(s_1) \sum_{j=0}^{n+1} 2^{-j}, \qquad C_{s_0}\gamma^{-1} K_n^{2\tau+1+s_0-\mu} \mathcal{M}_{s_1}(f_0, \lambda_1, \lambda_2)\le 1.
\]
This condition is fullfilled, by \eqref{condizione di piccolezza iterazione variazioni}, taking $K_0$ and $C_*(s_1)$ large enough, recalling that $K_n = K_0^{\chi^n}$ for any $n \geq 0$ and since $2 \tau + 1 + s_0 - \mu < 0$. 

\noindent
Finally, we prove the statement $({\cal P}4)_{n + 1}$. Let $\xi \in {\cal O}_{n + 1}^\gamma(\lambda_1)$. By the definition \eqref{buoneacque2}, $\xi \in {\cal O}_n^\gamma(\lambda_1)$ and the induction hypothesis implies that $\xi   \in {\cal O}_n^{\gamma - \rho}(\lambda_2)$. Since, trivially ${\cal O}_n^{\gamma}(\lambda_1) \subseteq {\cal O}_n^{\gamma - \rho}(\lambda_1)$, one has that 
$$
\xi  \in {\cal O}_{n + 1}^\gamma(\lambda_1) \subseteq {\cal O}_n^{\gamma - \rho}(\lambda_1) \cap {\cal O}_n^{\gamma - \rho}(\lambda_2)\,. 
$$
We can then apply the estimate \eqref{Delta 12 m n} implying that for any $\omega \in {\cal O}_{n + 1}^\gamma(\lambda_1) \subseteq {\cal O}_n^{\gamma - \rho}(\lambda_1) \cap {\cal O}_n^{\gamma - \rho}(\lambda_2)$ one has that 
$$
|\Delta_{12} \alpha_n | \lesssim \| \Delta_{12} f_0\|_{s_0 + \mathtt b}\,.  
$$
Therefore, for any $k \in \Z^N \setminus\{ 0\}$, $|k| \leq K_{n}$, one has that 
\begin{align}
|\alpha_n(\xi; \lambda_2 ) \cdot k |  & \geq |\alpha_n(\xi; \lambda_1) \cdot k | - |\Delta_{12} \alpha_n|| k | \nonumber\\
& \geq \frac{\gamma}{\langle k  \rangle^\tau} - K_n \| \Delta_{12} a_0\|_{s_0 + \mathtt b} \geq \frac{\gamma - \rho}{\langle k \rangle^\tau}
\end{align}
By the condition \eqref{piccolezza delta 12 a0 S4}. Then $\xi  \in {\cal O}_{n + 1}^{\gamma - \rho}(\lambda_2)$, which is the claimed statement. 
\end{proof}

\subsection{Proof of Theorem \ref{moserMic}}

Now we can prove the Theorem \ref{moserMic}.
\begin{proof}
We fix $s_1$ as in \eqref{tordo3} and choose $\eta_\star$ so that \eqref{picci} implies \eqref{small}, namely (recall \eqref{tordo3})
\[
K_0^{\varrho}\eta_*\le \delta_*.
\]
 Consider now the sequence $h_n$ defined in Lemma \ref{ittero}-($\mathcal{P}_2$). By formula \eqref{misonorotta} this is a Cauchy sequence in $H^s(\T^{N})$ for all $s\geq s_0$. Let us denote by $h^{(\infty)}$ its limit. We note that $h^{(\infty)}$ belongs to $\cap_{s\geq s_0} H^s(\T^{N})$, hence it is a $C^{\infty}$ function in  $\theta$. As a consequence $\Psi^{(\infty)}$ is $C^\infty$ torus diffeomorphism.\\
In  the same way, by \eqref{proto} the sequence $\alpha_n$ is a Cauchy sequence and we denote by $\alpha_\infty$ its limit.\\
We claim that 
\begin{equation}\label{zanzur5}
(\Psi^{(\infty)})^{-1}\Big(  \xi + f_0 + (\xi + f_0)\cdot \partial_\theta h^{(\infty)}) \Big)=\alpha_{\infty}.
\end{equation}
First we prove by induction that (recall \eqref{zanzur})
\begin{equation}\label{zanzur3}
(\Psi_n)_* X_0=X_n.
\end{equation}
For $n=0$ this is trivially true. Now prove the $(n+1)$-th step. Recalling the definition \eqref{zanzur2}, by the composition of pushforwards
\[
(\Psi_{n+1})_* X_0=(\Phi_{n+1})_*(\Psi_n)_* X_0=(\Phi_{n+1})_* X_n=X_{n+1}.
\]
Now by \eqref{zanzur3} we have that
\begin{equation}\label{zanzur4}
(\Psi_n)^{-1}\Big( \xi + f_0 + (\xi + f_0) \cdot \partial_\theta  h_n  \Big)= \alpha_n+ f_n.
\end{equation}
By \eqref{colomba1} the r. h. s. of \eqref{zanzur4} converges in $H^{s_0}$ to $\alpha_{\infty}$. By the fact that $h_n$ converges to $h^{(\infty)}$ in $H^s$, for every $s\geq s_0$, then
\[
(\Psi^{(\infty)})^{-1}\Big(  \xi + f_0 + (\xi + f_0)\cdot \partial_\theta h^{(\infty)}) \Big) -(\Psi_n)^{-1}\Big( \xi + f_0 + (\xi + f_0) \cdot \partial_\theta  h_n  \Big)
\]
converges to $0$ in $H^{s_0}$ by using triangle inequalities, the mean value theorem and the bounds given in Lemma \ref{change}. Then we proved our claim.\\
By \eqref{zanzur5}, setting $\Psi^{(\infty)}: \theta \mapsto ( \theta +h^{(\infty)}(\theta ))$, we have
\[
\Psi^{(\infty)}_* X_0 =  \alpha_\infty(\xi) \cdot \frac{\partial}{\partial\theta}\,,\quad \forall \xi \in \cap_n\cO_n.
\]
The bounds \eqref{tordo4} follow by  \eqref{proto}.
In order to complete the proof we need to show that 
\[
\calO^{2\gamma}_{\infty}\subset \bigcap_n \mathcal{O}_n.
\]
We prove this by induction. By definition $\calO^{2\gamma}_{\infty}\subset \mathcal{O}_0$. 
Suppose that $\calO^{2\gamma}_{\infty}\subset \mathcal{O}_n$ and we claim that $\calO^{2\gamma}_{\infty}$ is included in $\mathcal{O}_{n+1}$.\\
Fix $\xi \in\calO^{2\gamma}_{\infty}$ and $\lvert k \rvert\le K_n$. Then by \eqref{proto}, \eqref{small} and recalling $\mu$ in \eqref{tordo3} we have
\[
\lvert \alpha_n(\xi) \cdot k \rvert\geq \lvert \alpha_\infty(\xi) \cdot k \rvert-\lvert \alpha_{\infty}-\alpha_n \rvert\lvert k \rvert\geq \frac{2 \gamma}{\langle k \rangle^{\tau}} - \delta_0(s_1) K_0^{\mu} K_{n-1}^{-\mu} K_n^2 K_n^{\tau} \geq \frac{\gamma}{ \langle k  \rangle^{\tau}}.
\]
Finally, note that if $f_0$ is a reversible vector field, all the diffeomorphisms $\theta \mapsto \theta + h_n(\theta)$ are reversibility preserving, namely $h_n = {\rm odd}(\theta) $ for any $n \in \N$. Hence the limit function $h^{(\infty)} = \lim_{n \to + \infty} h_n$ is ${\rm odd}(\theta)$ implying that the map $\theta \mapsto \theta + h^{(\infty)}(\theta)$ is reversibility preserving.  
The proof of the theorem is then concluded. 
\end{proof}
\begin{remark}
Lemma \eqref{ittero}-$({\cal P}4)_n$ implies that if $0 < \gamma - \rho < \gamma < 1$ and $\lambda_1 , \lambda_2 \in B_E$ satisfies $K_{n - 1}^{\tau + 1} \| \Delta_{12} a_0\|_{s_0 + \mathtt b} \leq \rho$, then ${\cal O}_\infty^{2 \gamma}(\lambda_1) \subseteq \cap_{m = 0}^n {\cal O}_m^{\gamma - \rho}(\lambda_2)$. 
\end{remark}

\appendix

\section{Technical Lemmata}\label{lemmitecnici}

In this Section we present standard tame and Lipschitz estimates for composition of functions and changes of variables.

\medskip
Let us denote $L^{\infty}:=L^{\infty}(\T^{d}, \mathbb{C})$ and $W^{s, \infty}:=W^{s, \infty}(\T^d, \mathbb{C})$ with $d\geq 1$. The norms of these spaces are respectively indicated with $\lvert \cdot \rvert_{L^{\infty}}:=\lvert \cdot \rvert_{0, \infty}$, $\lvert \cdot \rvert_{s, \infty}$ and are defined by
\begin{equation}\label{holderinfinito}
\lvert u \rvert_{L^{\infty}}:=\sup_{x\in \T^d} \lvert u(x) \rvert, \quad
\lvert u \rvert_{s, \infty}:=\sum_{s_1\le s} \lvert D^{s_1} u \rvert_{L^{\infty}}, \quad \lvert D^{s_1} u \rvert_{L^{\infty}}:=\sup_{\lvert \vec{s}_1 \rvert=s_1} \lvert \partial_x^{\vec{s}_1} u \rvert_{L^{\infty}},
\end{equation}
here $D^s$ is the $s$-th Fr\'echet derivative with respect to $x$, hence $D^s$ is a symmetric multi-linear operator.

Let us denote with $H^s:=H^s(\T^d, \mathbb{C})$ the space of Sobolev functions on $\T^d$ defined by
\begin{equation}
H^s(\T^{d}, \mathbb{C}):=\left\{ u\in L^2(\T^{d}) : \lVert u \rVert_s^2:=\sum_{j\in\mathbb{Z}^d} \lvert u_{ j} \rvert^2 \langle j \rangle^{2 s}<\infty \right\}.
\end{equation}
We shall actually use the equivalent norm
\begin{equation}
\lVert u \rVert_s:=\lVert u \rVert_{H^s(\T^d)}:=\lVert u \rVert_{L^2(\T^d)}+\lVert D^s u \rVert_{L^2(\T^d)}, \quad \lVert D^s u \rVert_{L^2(\T^d)}:=\sup_{\lvert \vec{s} \rvert=s} \lVert \partial_x^{\vec{s}} u \rVert_{L^2(\T^d)}.
\end{equation}

%
\begin{lem}\label{lemma interpolazione smoothing}
	Let $s_0>d/2$. Then the following holds.
	\begin{itemize}
		\item[$(i)$] {\bf Embedding}. $\lvert u \rvert_{L^{\infty}}\le \lVert u \rVert_{s_0}$ for all $u\in H^{s_0}$.
		\item[$(ii)$] {\bf Algebra}. $\lVert u v \rVert_{s_0}\le C(s_0) \lVert u \rVert_{s_0} \lVert v \rVert_{s_0}$ for all $u, v\in H^{s_0}$.
		\item[$(iii)$] {\bf Interpolation}. For $0\le s_1\le s\le s_2$, $s=\lambda s_1+(1-\lambda) s_2$, $\lambda\in [0, 1]$,
		\[
		\lVert u \rVert_s\le \lVert u \rVert_{s_1}^{\lambda}\lVert u \rVert_{s_2}^{1-\lambda}, \quad \forall u\in H^{s_2}.
		\]
		Let $a_0, b_0\geq 0$ and $p, q>0$. For all $u\in H^{a_0+p+q}$, $v\in H^{b_0+p+q}$
		\[
		\lVert u \rVert_{a_0+p}\lVert v \rVert_{b_0+q}\le \lVert u \rVert_{a_0+p+q}\lVert v \rVert_{b_0}+\lVert u \rVert_{a_0}\lVert v \rVert_{b_0+p+q}.
		\]
		Similarly
		\[
		\lvert u \rvert_{s, \infty}\le C(s_1, s_2) \lvert u \rvert_{s_1, \infty}^{\lambda} \lvert v \rvert_{s_2, \infty}^{1-\lambda} \quad \forall u\in W^{s_2, \infty}
		\]
		and for all $u\in W^{a_0+p+q}$, $v\in W^{b_0+p+q}$
		\[
		\lvert u \rvert_{a_0+p, \infty}\lvert v \rvert_{b_0+q, \infty}\le C(a_0, b_0, p, q) \lvert u \rvert_{a_0+p+q, \infty}\lvert v \rvert_{b_0, \infty}+\lvert u \rvert_{a_0, \infty}\lvert v \rvert_{b_0+p+q, \infty}.
		\]
		
		
		\item[$(iv)$] For any $s, \alpha \geq 0$, 
		\begin{equation}\label{smoothingest}
		\| \Pi_N u \|_{s + \alpha} \leq N^\alpha \| u \|_s\,, \quad \| \Pi_N^\bot u\|_s \leq N^{- \alpha} \| u \|_{s + \alpha}
		\end{equation}
		where 
		$$
		\Pi_N u(\varphi, x) := \sum_{|(\ell, j)| \leq N} u_{\ell, j} e^{\ii (\ell \cdot \varphi + j x)}\,, \quad \Pi_N^\bot := {\rm Id} - \Pi_N\,. 
		$$
	\end{itemize}
\end{lem}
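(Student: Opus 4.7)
The four items are classical facts about Sobolev spaces on the torus, and my plan is to prove each directly on the Fourier side, exploiting the orthonormal basis $(e^{\ii k\cdot x})_{k\in\Z^d}$.

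For (i), I would expand $u(x)=\sum_k u_k e^{\ii k\cdot x}$ and estimate pointwise by Cauchy-Schwarz,
\begin{equation*}
|u(x)|\le\sum_k |u_k|=\sum_k \langle k\rangle^{-s_0}\langle k\rangle^{s_0}|u_k|\le\Big(\sum_k \langle k\rangle^{-2s_0}\Big)^{1/2}\|u\|_{s_0},
\end{equation*}
observing that the first sum converges precisely because $2s_0>d$. For (iv), the two inequalities are immediate from the definition of $\|\cdot\|_s$: on $\{|k|\le N\}$ the factor $\langle k\rangle^\alpha$ is dominated by $N^\alpha$, while on $\{|k|>N\}$ one has $\langle k\rangle^{-\alpha}\le N^{-\alpha}$.

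For (iii), I would use logarithmic convexity. The first bound follows by writing $\langle k\rangle^{2s}=\langle k\rangle^{2\lambda s_1}\langle k\rangle^{2(1-\lambda)s_2}$ and applying Hölder's inequality in the $\ell^1_k$ summation with exponents $1/\lambda$ and $1/(1-\lambda)$. For the product version, set $\theta=q/(p+q)$ and apply the just-proved single-function interpolation to get
\begin{equation*}
\|u\|_{a_0+p}\le\|u\|_{a_0}^{\theta}\|u\|_{a_0+p+q}^{1-\theta},\qquad \|v\|_{b_0+q}\le\|v\|_{b_0}^{1-\theta}\|v\|_{b_0+p+q}^{\theta}.
\end{equation*}
Multiplying these and applying Young's inequality $AB\le\theta A^{1/\theta}+(1-\theta)B^{1/(1-\theta)}$ to the mixed product then rearranges into the stated sum. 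The $W^{s,\infty}$ analogues follow by the same scheme once one recalls that differentiation commutes with dyadic interpolation in the sup norm.

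The algebra property (ii) is the one genuinely nontrivial point and will be the main obstacle. The plan is to compute the Fourier coefficients $(uv)_k=\sum_j u_{k-j}v_j$ and exploit the triangle-inequality splitting $\langle k\rangle^{s_0}\le C(s_0)\bigl(\langle k-j\rangle^{s_0}+\langle j\rangle^{s_0}\bigr)$. This yields two convolution sums; applying Cauchy-Schwarz in $j$ in each one reduces the $\ell^2_k$-norm to a product of the form $\|u\|_{s_0}\|v\|_{L^1_k}$ or $\|u\|_{L^1_k}\|v\|_{s_0}$, and the $L^1$ factor is in turn controlled by the $H^{s_0}$ norm via the calculation used in (i). The delicate point is to keep the constant depending only on $s_0$ and $d$, and for this reason I would invoke (i) directly rather than going through any abstract interpolation; the tame variant needed elsewhere in the paper would follow by replacing one copy of $\langle k\rangle^{s_0}$ by $\langle k\rangle^s$ in the splitting and repeating the argument.
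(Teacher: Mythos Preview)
Your sketch is correct in all four items and follows the standard Fourier-side arguments. The paper itself does not prove this lemma: it states the results as classical, and earlier in Section~\ref{sezione functional setting} refers the reader to the appendix of \cite{BBoP} for proofs of the tame product estimates. So there is no paper proof to compare against; you have simply filled in what the authors omitted. One cosmetic remark: your argument for (i) produces the bound with a constant $\big(\sum_k\langle k\rangle^{-2s_0}\big)^{1/2}$ in front, whereas the lemma is stated without one---this is almost certainly a typo in the statement rather than a gap in your reasoning.
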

\begin{remark}
	If $u=u(\omega)$ and $v=v(\omega)$ depend in a Lipschitz way on a parameter $\calO\subset \mathbb{R}^{\nu}$, all the previous statements hold by replacing $\lvert \cdot \rvert_{s, \infty}$, $\lVert \cdot \rVert_s$ with the Lipschitz norms $\lvert \cdot \rvert^{\g,\cO}_{s, \infty}$, $\lVert \cdot \rVert^{\g,\cO}_s$, provided that we take $s_0> d/2+1$ (i.e. all the relations hold with $s_0+1$, for $s_0>d/2$ and then we rename $s_0$).
	Indeed we first apply the formulas above to  the variation $(u(\omega)-u(\omega'))/(\omega-\omega')$, this implies the desired bounds for  the  norm
	$
	\max\{ \lVert u \rVert^{sup}_s, \gamma \lVert u \rVert_{s-1}^{lip}\}
	$.
Since this norm is equivalent to  $\lVert \cdot \rVert_s^{\gamma, \calO}$, our claim follows.
\end{remark}

\begin{lem}\label{change}{\bf (Change of variable)} Consider $p\in W^{s,\infty}({\mathbb T}^{d};{\mathbb R}^{d})$, $s\geq 1$, with $|p|_{1, \infty}\leq1/2$.
	Let $f(x)=x+p(x)$. Then:\\
	$(i)$ $f:\mathbb{T}^d\to \mathbb{T}^d$ is a diffeomorphism, its inverse is $f^{-1}(y)=g(y)=y+q(y)$ with
	$q\in W^{s,\infty}({\mathbb T}^{d};{\mathbb R}^{d})$ and $|q|_{s, \infty}\leq C|p|_{s, \infty}$. More precisely,
	\begin{equation}\label{A18}
	|q|_{L^{\infty}}=|p|_{L^{\infty}}, \; |D q|_{L^{\infty}}\leq 2| D p|_{L^{\infty}},
	\; | D q|_{s-1, \infty}\leq C| D p|_{s-1, \infty},
	\end{equation}
	where the constant $C$ depends on $d,s$ 

	\smallskip	
	
	\noindent	$(ii)$ If $u\in H^{s}({\mathbb T}^{d};{\mathbb C})$, then $u\circ f(x)=u(x+p(x))\in H^{s}({\mathbb T}^{d};{\mathbb C})$, and, with the same $C$ as
	in $(i)$ one has
	\begin{subequations}\label{A20}
		\begin{align}
		\|u\circ f\|_{s}&\leq \|u\|_{s}+ C(\|u\|_s|p|_{1, \infty}+  |Dp|_{s-1, \infty }\|u\|_{1}),\label{A20a}\\
		\|u\circ f-u\|_{s}&\leq C(|p|_{L^{\infty}}\|u\|_{s+1}+|p|_{s, \infty }\|u\|_{2}),\label{A20b}
		\end{align} 
	\end{subequations}
	\smallskip		
	
	\noindent $(iii)$	Assume that $p=p_{{\omega}}$ depends in a Lipschitz way by a parameter 
	${\omega}\in\calO\subset \mathbb{R}^{\nu}$, and suppose, as above, that $| p_{{\omega}}|_{1,\infty}\leq1/2$
	for all ${\omega}$. Then $q=q_{{\omega}}$ is also Lipschitz in ${\omega}$, and
	%
	%
	\begin{equation}\label{A19}
	|q|_{s, \infty}^{\g,\cO}\leq C\left(|p|^{\g,\cO}_{s,\infty}+
	\{\sup_{{\omega}\in\calO}|p_{{\omega}}|_{s, \infty }\}
	|p|_{1,\infty}^{\g,\cO}
	\right)\leq C|p|^{\g,\cO}_{s,\infty},
	\end{equation}
	\begin{equation}\label{larana}
	\|u\circ f\|_s^{\g,\calO} 
	\le \|u\|_{s}^{\g,\calO}+C (\lVert u \rVert_{s}^{\g,\calO}|p|^{\g,\calO}_{1,\infty}+|p|^{\g,\calO}_{s,\infty}\|u\|_{2}^{\gamma,\calO}),\,\,\,\,\,  {s\in\mathbb{N}}
	\end{equation}
	the constant $C$ depends on $d,s$ (it is independent on ${\gamma}$).
	
\end{lem}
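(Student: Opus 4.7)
\textbf{Proof proposal for Lemma \ref{change}.}

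For part $(i)$, the plan is first to show that $f$ is a diffeomorphism of $\T^d$ and then to extract quantitative bounds on the inverse. Since $|p|_{1,\infty}\le 1/2$, the matrix $Df(x) = I + Dp(x)$ is invertible everywhere with Neumann series bound $\|Df^{-1}\|\le 2$, so $f$ is a local diffeomorphism of the compact manifold $\T^d$, hence a covering map. The homotopy $f_t := \mathrm{id}+tp$ stays inside local diffeomorphisms of $\T^d$, so $\deg(f)=\deg(\mathrm{id})=1$ and $f$ is a true diffeomorphism; equivalently, for fixed $y$ the map $x\mapsto y-p(x)$ is a contraction on $\R^d$ and gives a unique fixed point. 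Writing $f^{-1}(y)=y+q(y)$, the identity $f(f^{-1}(y))=y$ yields the functional equation $q(y)=-p(y+q(y))$, from which $|q|_{L^\infty}=|p|_{L^\infty}$ and, differentiating, $Dq=-(I+Dp\circ f^{-1})^{-1}(Dp\circ f^{-1})$, giving $|Dq|_{L^\infty}\le 2|Dp|_{L^\infty}$ via Neumann. For higher derivatives I would apply Fa\`a di Bruno to $q=-p\circ(\mathrm{id}+q)$ inductively, expressing $D^kq$ as $-Dp\circ f^{-1}\cdot D^kq$ plus polynomials in $\{D^jq\}_{j<k}$ and $\{D^jp\}_{j\le k}$; absorbing the first term into the left hand side (since $|Dp|_{L^\infty}\le 1/2$) and estimating the rest by Moser-type interpolation (Lemma \ref{lemma interpolazione smoothing}\,$(iii)$ on $W^{s,\infty}$), one obtains the tame bound $|Dq|_{s-1,\infty}\le C|Dp|_{s-1,\infty}$.

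For part $(ii)$, I would again use the Fa\`a di Bruno formula
\[ D^s(u\circ f) = \sum_{\pi\in P_s} c_\pi\, (D^{|\pi|}u)\circ f\cdot\prod_{B\in\pi}D^{|B|}f, \]
where $P_s$ is the set of partitions of $\{1,\dots,s\}$. The term corresponding to the trivial partition ($|\pi|=s$, all blocks singletons) gives $D^su\circ f\cdot(Df)^{\otimes s}$ whose $L^2$ norm is controlled by the change of variables $x\mapsto y=f(x)$, producing $\int|D^su|^2|\det Dg|\,dy\le 2\|D^su\|^2_{L^2}$ by part $(i)$. Every other term contains at least one factor of $D^{|B|}p$ with $|B|\ge 1$. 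Splitting each such product by H\"older and then applying the Gagliardo-Nirenberg/Moser interpolation inequality of Lemma \ref{lemma interpolazione smoothing}\,$(iii)$ to balance the indices, each mixed product can be bounded by $C(\|u\|_s|p|_{1,\infty}+|Dp|_{s-1,\infty}\|u\|_1)$; summing over $\pi$ gives \eqref{A20a}. For \eqref{A20b} I would write $u\circ f-u=\int_0^1 (Du)(x+tp(x))\cdot p(x)\,dt$ and apply \eqref{A20a} to the integrand with $Du$ in place of $u$ and with loss of one derivative, combined with interpolation between $|p|_{L^\infty}\|u\|_{s+1}$ and $|p|_{s,\infty}\|u\|_2$.

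For part $(iii)$, the idea is to apply parts $(i)$-$(ii)$ to the difference quotient in $\omega$. Writing $\Delta q := q_{\omega_1}-q_{\omega_2} = -p_{\omega_1}\circ g_{\omega_1}+p_{\omega_2}\circ g_{\omega_2}$ and splitting as $-(\Delta p)\circ g_{\omega_1}-(p_{\omega_2}\circ g_{\omega_1}-p_{\omega_2}\circ g_{\omega_2})$, the first term is controlled by part $(ii)$ applied to $\Delta p$, and the second by the mean value theorem applied at fixed $\omega_2$, bounding it by $|Dp_{\omega_2}|_{s-1,\infty}|\Delta q|_{L^\infty}+\ldots$; one closes the estimate on $|\Delta q|_{s,\infty}$ by absorbing $|\Delta q|_{L^\infty}$ back (using $|Dp|_{1,\infty}\le 1/2$), dividing by $|\omega_1-\omega_2|$ and taking the sup, then combining with the sup-norm bound from $(i)$. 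This gives \eqref{A19}. The bound \eqref{larana} is obtained identically: split $u_{\omega_1}\circ f_{\omega_1}-u_{\omega_2}\circ f_{\omega_2}=(\Delta u)\circ f_{\omega_1}+(u_{\omega_2}\circ f_{\omega_1}-u_{\omega_2}\circ f_{\omega_2})$, apply \eqref{A20a} to the first summand and \eqref{A20b}-type reasoning along the segment $t\mapsto u_{\omega_2}\circ(\mathrm{id}+p_{\omega_2}+t(p_{\omega_1}-p_{\omega_2}))$ to the second.

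The main obstacle is the bookkeeping in part $(ii)$: getting the precise \emph{linear} appearance of the highest norms $\|u\|_s$ and $|Dp|_{s-1,\infty}$ (multiplied only by lower-order factors) is essential for the KAM scheme and is not automatic from Fa\`a di Bruno. It requires choosing, for each term in the expansion, a clever distribution of derivatives between $u$ and $p$ and then invoking the product interpolation inequality in Lemma \ref{lemma interpolazione smoothing}\,$(iii)$ with indices tailored so that the loss $|p|_{1,\infty}$ (which is small) sits in front of the high-norm factor. The Lipschitz part $(iii)$ is then formally parallel but requires one additional absorption using the smallness of $|Dp|_{1,\infty}$.
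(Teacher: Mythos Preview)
Your proposal is correct and follows essentially the same route as the paper: parts $(i)$ and \eqref{A20b} are deferred to \cite{Ono} and \cite{Airy}, \eqref{A20a} is proved by the Fa\`a di Bruno expansion together with interpolation (the paper adds the small refinement of expanding $(Df)^{\otimes s}=(I+Dp)^{\otimes s}$ to isolate the bare term $(D^su)\circ f$, which is what yields the exact leading coefficient $1$ in front of $\|u\|_s$ rather than the $\sqrt 2$ your Jacobian bound would give), and the Lipschitz estimates in $(iii)$ use your same splitting into $(\Delta u)\circ f_\omega$ plus a frozen-$u$ difference. The only cosmetic difference is that for the frozen-$u$ term the paper changes variables to $\hat f:=f_\omega\circ g_{\omega'}$, so that $\hat f-\mathrm{id}=(\Delta_\omega p)\circ g_{\omega'}$ and \eqref{A20b} applies directly (this is exactly the one-derivative loss absorbed by the weighted norm $\|\cdot\|_s^{\gamma,\mathcal O}$), rather than integrating along your segment $t\mapsto \mathrm{id}+p_{\omega_2}+t\Delta p$; the two devices are equivalent.
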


\begin{proof} The estimate \eqref{A18} is proved  in \cite{Ono}, \eqref{A20b} is proved in the Appendix of \cite{Airy}. The bounds \eqref{A20a} are slightly different from the corresponding ones of  \cite{Ono}, \cite{Airy}.  This and the different choiche of wheighted Lipschitz norm reflect on  the Lipschitz bounds \eqref{A19} and \eqref{larana}.  Let us prove \eqref{A20a}. We follow the proof of Lemma $B.4$-$(ii)$ in \cite{Ono} abut treat in a different way some terms arising from the Faa di Bruno's formula. 
	First we note that $\| u\circ f\|_0\le  \|u\|_0(1+2|Dp|_{\infty})$.
	Then we consider the expression
	\[
	D^s (u\circ f)=\sum_{k=1}^s \sum_{ j_1+\dots+j_k=s,\,j_i\geq 1} C_{k}\,\, ( D^k u)[D^{j_1} f,\dots, D^{j_k} f]\,.
	\]
	Here the coefficients $C_k$ are integer numbers which take into account the combinatorics, it is easily seen that $C_s=1$ . We and note that  $D f=\mathrm{I}+D p$, while $D^{j} f = D^j p$ for $j\ge 1$. Then we split the sum above in the following way
	
	\begin{align}
	D^s (u\circ f) &=  \,\, ( D^s u)\circ f[D f,\dots, D f]+\sum_{k=1}^{s-1}\,\,\, \sum_{\substack{ j_1+\dots+j_k=s,\\ \nonumber
	 \prod_i j_i>1}} C_{k}\,\, ( D^k u)[D^{j_1} f,\dots, D^{j_k} f]\\ \nonumber
	  & =  (D^s u)\circ f+\sum_{r=1}^{s} \binom{s}{r}\,\, (D^s u)\circ f[\underbrace{Dp, \dots, Dp}_{\times \,r\,
	}, \underbrace{\mathrm{I}, \dots, \mathrm{I}}_{\times\, s-r\,
	}]\\
	\label{yotvata}
	&+\sum_{k=1}^{s-1}\,\,\, \sum_{\substack{ j_1+\dots+j_k=s,\\ \prod_i j_i>1}} C_{k}\,\, ( D^k u)\circ f[D^{j_1} f,\dots, D^{j_k} f]
	\\ \nonumber
	&= (D^s u)\circ f\\ \nonumber
	&+ \sum_{k=1}^{s}\sum_{\substack{r_1,r_2\\ 0<r_1+r_2\leq k}} \sum_{\substack{ j_1+\dots+j_{r_1}= s+r_1-k \\ j_i>1 }}\!\!\!\!\!\!C_{k,r_1,r_2}\,\, ( D^k u)\circ f[D^{j_1} p,\dots, D^{j_{r_1}} p, \underbrace{Dp, \dots, Dp}_{\times\, r_2}, \underbrace{\mathrm{I}, \dots, \mathrm{I}}_{\times\,k-r_1-r_2}]\nonumber
	\end{align}

	The first summand is estimated by noting that 
	\[
	\|(D^s u)\circ f\|_0 \le (1+ 2|Dp|_\infty)\|u\|_s.
	\]
	Now we rename $j_i = h_i+1$ for $i=1,r_1$ in the second summand and set $G=Dp$,  we get
	\[
	\sum_{\substack{ h_1+\dots+h_{r_1}= s-k  }} C_{k,r_1,r_2}\,\, ( D^k u)\circ f[D^{h_1} G,\dots, D^{h_{r_1}} G, \underbrace{G, \dots, G}_{\times\,r_2}, \underbrace{\mathrm{I}, \dots, \mathrm{I}}_{\times\,k-r_1-r_2}].
	\]
	The $L^2$ norms of the summands above are bounded by 
	\[
	2 C_{k,r_1,r_2} \|  u\|_k |G|_{h_1,\infty}\dots |G|_{h_{r_1},\infty}|G|_\infty^{r_2} .
	\]
	Then one can follow exactly the same proof of Lemma $B.4$-$(ii)$ in \cite{Ono}.
	
	\smallskip
	
	In order to prove \eqref{A19} we use formula (6.15) of \cite{Airy} which reads in terms of the Lipschitz seminorm
	\[
	\gamma | q|^{lip,\cO}_{s-1,\infty} \le C\left( |p|^{sup,\cO}_{s-1,\infty} + \gamma |p|_{s-1,\infty}^{lip,\cO}+ |p|^{sup,\cO}_{s,\infty} (|p|^{sup,\cO}_{0,\infty}+\gamma |p|_{0,\infty}^{lip,\cO})\right).
	\]
	In order to prove \eqref{larana} we compute the Lipschitz variation.  
	We have
	\[
	\|u(\omega,x+p_{\omega}(x))-u(\omega', x+p_{\omega'}(x))\|_s= \|\Delta_\omega u \circ f_\omega + (u \circ \widehat f - u )\circ f_{\omega'}\Delta_\omega p\|_s \,,\quad \widehat f= f_{\omega}\circ g_{\omega'}\,,
	\]
	now in the r.h.s. the first summand is  bounded by using  \eqref{A20a};  regarding the second summand we use the interpolation estimates for products and then \eqref{A20a}, \eqref{A20b}.
	Note that \eqref{A20b} {\em loses one derivative}, this  is why in  the norm $\lVert \cdot \rVert^{\g, \calO}$ we require the estimate of the Lipschitz variation of $\omega$ only for the norm $\lVert \cdot \rVert_{s-1}$. 
\end{proof}
\bibliography{bibliografia.bib}

\def\cprime{$'$}
\begin{thebibliography}{10}

\bibitem{Ar2}
V.~I. Arno{l\cprime}d.
\newblock On the classical perturbation theory and the stability problem of
  planetary systems.
\newblock {\em Dokl. Akad. Nauk SSSR}, 145:487--490, 1962.

\bibitem{Ar1}
V.~I. Arno{l\cprime}d.
\newblock Small denominators and problems of stability of motion in classical
  and celestial mechanics.
\newblock {\em Uspehi Mat. Nauk}, 18(6 (114)):91--192, 1963.

\bibitem{Ono}
P.~Baldi.
\newblock {P}eriodic solutions of fully nonlinear autonomous equations of
  {B}enjamin-{O}no type.
\newblock {\em Ann. I. H. Poincar\'e (C) Anal. Non Lin\'eaire}, 30:33--77,
  2013.

\bibitem{BBHM}
P.~Baldi, M.~Berti, E.~Haus, and R.~Montalto.
\newblock Time quasiperiodic gravity water waves in finite depth.
\newblock preprint 2017, arXiv:1708.01517.

\bibitem{Airy}
P.~Baldi, M.~Berti, and R.~Montalto.
\newblock K{AM} for quasi-linear and fully nonlinear forced perturbations of
  {A}iry equation.
\newblock {\em Math. Ann.}, 359(1-2):471--536, 2014.

\bibitem{BBM16}
P.~Baldi, M.~Berti, and R.~Montalto.
\newblock K{AM} for autonomous quasi-linear perturbations of {K}d{V}.
\newblock {\em Ann. Inst. H. Poincar\'e Anal. Non Lin\'eaire},
  33(6):1589--1638, 2016.

\bibitem{BaHa}
P.~Baldi and E.~Haus.
\newblock A {N}ash-{M}oser-{H}\"ormander implicit function theorem with
  applications to control and {C}auchy problems for {PDE}s.
\newblock {\em J. Funct. Anal.}, 273(12):3875--3900, 2017.

\bibitem{Bam}
D.~Bambusi.
\newblock Reducibility of 1-d {S}chr\"odinger equation with time quasiperiodic
  unbounded perturbations, {II}.
\newblock {\em Comm. Math. Phys.}, 353(1):353--378, 2017.

\bibitem{BG}
D.~Bambusi and S.~Graffi.
\newblock Time quasi-periodic unbounded perturbations of {S}chr\"odinger
  operators and {KAM} methods.
\newblock {\em Comm. Math. Phys.}, 219(2):465--480, 2001.

\bibitem{BGMR}
D.~Bambusi, B.~Gr\'ebert, A.~Maspero, and D.~Robert.
\newblock Reducibility of the quantum harmonic oscillator in d-dimensions with
  polynomial time-dependent perturbation.
\newblock {\em Anal. PDE}, 11(3):775--799, 2018.

\bibitem{BBiP2}
M.~Berti, P.~Biasco, and M.~Procesi.
\newblock {KAM} theory for reversible derivative wave equations.
\newblock {\em Archive for Rational Mechanics and Analysis}, 212(3):905--955,
  2014.

\bibitem{BBoP}
M.~Berti, P.~Bolle, and M.~Procesi.
\newblock An abstract {N}ash-{M}oser theorem with parameters and applications
  to {P}{D}{E}s.
\newblock {\em Annales de l'Institut Henri Poincare (C) Non Linear Analysis},
  27(1):377 -- 399, 2010.

\bibitem{BM1}
M.~Berti and R.~Montalto.
\newblock Quasi-periodic standing wave solutions of gravity-capillary water
  waves.
\newblock Preprint 2016 arXiv:1602.02411, To appear on Memoirs Am. Math. Soc.
  MEMO-891, 2016.

\bibitem{CY}
L.~Chierchia and J.~You.
\newblock Kam tori for 1d nonlinear wave equations with periodic boundary
  conditions.
\newblock {\em Comm. Math. Phys.}, 211:497--525, 2000.

\bibitem{EK2}
H.~L. Eliasson and S.~B. Kuksin.
\newblock On reducibility of {S}chr\"odinger equations with quasiperiodic in
  time potentials.
\newblock {\em Comm. Math. Phys.}, 286(1):125--135, 2009.

\bibitem{EGK}
L.~H. Eliasson, B.~Gr{\'e}bert, and S.B. Kuksin.
\newblock K{AM} for the nonlinear beam equation.
\newblock {\em Geom. Funct. Anal.}, 26(6):1588--1715, 2016.

\bibitem{EK}
L.~H. Eliasson and S.~B. Kuksin.
\newblock K{AM} for the nonlinear {S}chr\"odinger equation.
\newblock {\em Ann. of Math. (2)}, 172(1):371--435, 2010.

\bibitem{FaKr}
B.~Fayad and R.~Krikorian.
\newblock {H}erman's last geometric theorem.
\newblock {\em Annales scientifiques de l'{\'E}cole Normale Sup{\'e}rieure},
  42(2):193--219, 2009.

\bibitem{FGP}
R.~Feola, F.~Giuliani, and M.~Procesi.
\newblock {Q}uasi-periodic solutions for {H}amiltonian perturbation of
  {D}egasperis-{P}rocesi equation.
\newblock in preparation.

\bibitem{FP}
R.~Feola and M.~Procesi.
\newblock Quasi-periodic solutions for fully nonlinear forced reversible
  {S}chr\"odinger equations.
\newblock {\em J. Differential Equations}, 259(7):3389--3447, 2015.

\bibitem{GY}
J.~Geng and J.~You.
\newblock A {KAM} theorem for {H}amiltonian partial differential equations in
  higher dimensional spaces.
\newblock {\em Comm. Math. Phys.}, 262(2):343--372, 2006.

\bibitem{GYX}
J.~Geng, J.~You, and X.~Xu.
\newblock {KAM} tori for cubic {NLS} with constant potentials.
\newblock Preprint.

\bibitem{Gi}
F.~Giuliani.
\newblock {Q}uasi-periodic solutions for quasi-linear generalized {K}d{V}
  equations.
\newblock {\em Journal of Differential Equations}, 262(10):5052 -- 5132, 2017.

\bibitem{GP}
B.~Gr{\'e}bert and E.~Paturel.
\newblock {KAM} for the {K}lein-{G}ordon equation on {${\mathbb S}^d$}.
\newblock {\em Boll. Unione Mat Ital.}, 9(2):237--288, 2016.

\bibitem{Herm2}
M.~R. Herman.
\newblock {\em Sur les courbes invariantes par les diff\'eomorphismes de
  l'anneau. {V}ol. 1}, volume 103 of {\em Ast\'erisque}.
\newblock Soci\'et\'e Math\'ematique de France, Paris, 1983.
\newblock With an appendix by Albert Fathi, With an English summary.

\bibitem{Herm}
M.~R. Herman.
\newblock Diff\'erentiabilit\'e optimale et contre-exemples \`a la fermeture en
  topologie {$C^\infty$} des orbites r\'ecurrentes de flots hamiltoniens.
\newblock {\em C. R. Acad. Sci. Paris S\'er. I Math.}, 313(1):49--51, 1991.

\bibitem{Ku}
S.~B. Kuksin.
\newblock {\em Nearly integrable infinite-dimensional {H}amiltonian systems},
  volume 1556 of {\em Lecture Notes in Mathematics}.
\newblock Springer-Verlag, Berlin, 1993.

\bibitem{Ku2}
S.~B. Kuksin.
\newblock A {KAM} theorem for equations of the {K}orteweg-de {V}ries type.
\newblock {\em Rev. Math. Phys.}, 10(3):1--64, 1998.

\bibitem{KP}
S.~B. Kuksin and J.~P{\"o}schel.
\newblock Invariant {C}antor manifolds of quasi-periodic oscillations for a
  nonlinear {S}chr\"odinger equation.
\newblock {\em Ann. of Math.}, 143(1):149--179, 1996.

\bibitem{LY}
J.~Liu and X.~Yuan.
\newblock A {KAM} {T}heorem for {H}amiltonian partial differential equations
  with unbounded perturbations.
\newblock {\em Comm. Math. Phys,}, 307:629--673, 2011.

\bibitem{Mas}
J.E. Massetti.
\newblock Normal forms for perturbations of systems possessing a diophantine
  invariant torus.
\newblock {\em Ergodic Theory and Dynamical Systems}, pages 1--47, 2017.

\bibitem{Montsublin}
R.~Montalto.
\newblock {G}rowth of {S}obolev norms for a class of {S}chr\"odinger equations
  with sublinear dispersion.
\newblock in preparation.

\bibitem{Mon2}
R.~Montalto.
\newblock A {R}educibility result for a class of {L}inear {W}ave {E}quations on
  {${\mathbb T}^d$}.
\newblock {\em International Mathematics Research Notices}, page rnx167, 2017.

\bibitem{Mos67}
J.~Moser.
\newblock A rapidly convergent iteration method and nonlinear differential
  equations.
\newblock {\em Uspehi Mat. Nauk}, 23(4 (142)):179--238, 1968.

\bibitem{PosTe}
J.~P\"oschel.
\newblock {\em \"{U}ber invariante {T}ori in differenzierbaren {H}amiltonschen
  {S}ystemen}.
\newblock Bonner Mathematische Schriften [Bonn Mathematical Publications], 120.
  Universit\"at Bonn, Mathematisches Institut, Bonn, 1980.
\newblock Dissertation, Rheinische Friedrich-Wilhelms-Universit\"at, Bonn,
  1979, Beitr\"age zur Differentialgeometrie [Contributions to differential
  geometry], 3.

\bibitem{Pos1}
J.~P\"{o}schel.
\newblock {I}ntegrability of {H}amiltonian systems on {C}antor sets.
\newblock {\em Commun. Pure Appl. Math.}, 35:653--696, 1982.

\bibitem{PP}
C.~Procesi and M.~Procesi.
\newblock A normal form for the {S}chr\"odinger equation with analytic
  non-linearities.
\newblock {\em Communications in Mathematical Physics}, 312(2):501--557, 2012.

\bibitem{Rus}
H.~R\"ussmann.
\newblock Kleine {N}enner. {I}. \"{U}ber invariante {K}urven differenzierbarer
  {A}bbildungen eines {K}reisringes.
\newblock {\em Nachr. Akad. Wiss. G\"ottingen Math.-Phys. Kl. II},
  1970:67--105, 1970.

\bibitem{Rus2}
H.~R\"ussmann.
\newblock {\em On optimal estimates for the solutions of linear partial
  differential equations of first order with constant coefficients on the
  torus}, pages 598--624. Lecture Notes in Phys., Vol. 38.
\newblock Springer, Berlin, 1975.

\bibitem{Sala}
D.~A. Salamon.
\newblock The {K}olmogorov-{A}rnold-{M}oser theorem.
\newblock {\em Math. Phys. Electron. J.}, 10:Paper 3, 37, 2004.

\bibitem{W}
C.~Eugene Wayne.
\newblock Periodic and quasi-periodic solutions of nonlinear wave equations via
  {KAM} theory.
\newblock {\em Comm. Math. Phys.}, 127(3):479--528, 1990.

\bibitem{Yo}
J.~Ch. Yoccoz.
\newblock Travaux de {H}erman sur les tores invariants.
\newblock {\em S{\'e}minaire Bourbaki}, 34:311--344, 1991-1992.

\end{thebibliography}
\end{document}